\title[Invariant Hilbert scheme resolution: the toric case]{Invariant Hilbert scheme resolution of Popov's $SL(2)$-varieties I: the toric case}
\author{Ayako Kubota}
\address{
Department of Mathematics, Graduate School of Fundamental Science and Engineering, 
Waseda University, 
3-4-1 Ohkubo, Shinjuku, Tokyo 169-8555, Japan}
\email{ayako.kubota.math@gmail.com}
\theoremstyle{plain}
\newtheorem{theorem}[subsection]{Theorem}
\newtheorem{lemma}[subsection]{Lemma}
\newtheorem{proposition}[subsection]{Proposition}
\newtheorem{corollary}[subsection]{Corollary}
\newtheorem*{theorem*}{Theorem}
\newtheorem*{corollary*}{Corollary}
\newtheorem*{MainTheorem}{Main Theorem}
\theoremstyle{definition}
\newtheorem{definition}[subsection]{Definition}
\newtheorem{example}[subsection]{Example}
\theoremstyle{remark}
\newtheorem{remark}[subsubsection]{Remark}
\DeclareSymbolFont{cmletters}{OML}{cmm}{m}{it}
\DeclareSymbolFont{cmsymbols}{OMS}{cmsy}{m}{n}
\DeclareSymbolFont{cmlargesymbols}{OMX}{cmex}{m}{n}
\DeclareMathSymbol{\myjmath}{\mathord}{cmletters}{"7C}
\DeclareMathSymbol{\myamalg}{\mathbin}{cmsymbols}{"71}
\DeclareMathSymbol{\mycoprod}{\mathop}{cmlargesymbols}{"60}
\let\jmath\myjmath
\DeclareMathOperator{\Hilb}{Hilb}
\DeclareMathOperator{\Sym}{Sym}
\DeclareMathOperator{\Spec}{Spec}
\DeclareMathOperator{\pr}{pr}
\DeclareMathOperator{\id}{id}
\DeclareMathOperator{\Hom}{Hom}
\DeclareMathOperator{\diag}{diag}
\DeclareMathOperator{\Gr}{Gr}
\DeclareMathOperator{\Ker}{Ker}
\DeclareMathOperator{\Irr}{Irr}
\DeclareMathOperator{\Univ}{Univ}
\def\E{E_{l,m}}
\def\git{/\!\!/}
\def\G{G_0 \times G_m}
\def\H{\Hilb^{\G}_h(H_{q-p})}
\def\C{\mathbb C}
\def\Z{\mathbb Z \times \mathbb Z/m \mathbb Z}
\def\P{\mathbb P^1 \times \mathbb P^1}
\def\w{\omega}
\newcommand{\biggg}[1]{{\hbox{$\left#1\vbox to 20.5pt{}\right.\n@space$}}}
\newcommand{\Biggg}[1]{{\hbox{$\left#1\vbox to 23.5pt{}\right.\n@space$}}}
\newcommand{\bigggg}[1]{{\hbox{$\left#1\vbox to 26.5pt{}\right.\n@space$}}}
\newcommand{\Bigggg}[1]{{\hbox{$\left#1\vbox to 29.5pt{}\right.\n@space$}}}
\newcommand{\biggggg}[1]{{\hbox{$\left#1\vbox to 32.5pt{}\right.\n@space$}}}
\newcommand{\Biggggg}[1]{{\hbox{$\left#1\vbox to 35.5pt{}\right.\n@space$}}}
\newcommand{\bigggggg}[1]{{\hbox{$\left#1\vbox to 38.5pt{}\right.\n@space$}}}
\newcommand{\Bigggggg}[1]{{\hbox{$\left#1\vbox to 41.5pt{}\right.\n@space$}}}
\begin{document}

\baselineskip 17pt
\parskip 7pt

\maketitle
\vspace{-1cm}

\begin{abstract}
We show that every $3$-dimensional affine normal quasihomogeneous  $SL(2)$-variety has an equivariant resolution of singularities given by an invariant Hilbert scheme. 
This article treats the case where such $SL(2)$-variety is  toric. The non-toric case is considered in the forthcoming article \cite{Ku}. 
\end{abstract}


\section*{Introduction}
Let $G$ be a reductive algebraic group, and $X$ an affine $G$-scheme of finite type. 
The invariant Hilbert scheme $\Hilb^G_h(X)$ is a moduli space that 
parametrizes $G$-stable closed subschemes of $X$ 
whose coordinate rings have Hilbert function $h$. 
It was introduced by Alexeev and Brion in 
\cite{AB} as 
a common generalization of the $G$-Hilbert scheme of Ito and Nakamura \cite{IN} and the  
multigraded Hilbert scheme of Haiman and Sturmfels \cite{HS}. 
If we take $h$ to be  the \emph{Hilbert function of the general fibers} of  the quotient morphism $\pi : X \to X \git G$,  
we obtain the following morphism:
\[
\xymatrix{
\gamma: \Hilb^G_h(X) \ar[r] &  X \git G.
}
\]
The morphism $\gamma$ is called the \emph{quotient-scheme map}, or the \emph{Hilbert--Chow morphism},  
that  sends a closed point $[Z] \in \Hilb^G_h(X)$ 
to $Z \git G$. 
Since  $\gamma$ (or its restriction to the 
\emph{main 
component} $\mathcal H^{main}$ of the invariant Hilbert scheme $\Hilb^G_h(X)$) is a projective birational morphism, we can ask whether $\gamma$ gives  
a resolution 
of singularities of the quotient variety $X \git G$. 
Becker \cite{Bec} studies the case in which $G=SL(2)$, and $X$ the zero fiber of the moment map of a certain $SL(2)$-action on $(\C^2)^{\oplus 6}$ 
as one of the first examples of the invariant Hilbert scheme with multiplicities, and she proves that $\gamma$ gives a desingularization. 
In \cite{Ter14}, Terpereau considers some cases in which $G$ is a classical group, and $X$ a rational representation of $G$, 
and he provides examples where the Hilbert--Chow morphism $\gamma$ is a desingularization and where it is not. 
Other examples where $\gamma$ gives a resolution can  be found in \cite{Ter2, JR}. 
The present article studies singularities of $3$-dimensional affine normal quasihomogeneous $SL(2)$-varieties via an  invariant Hilbert scheme, and this gives another example where $\gamma$ is a resolution. 

We say that a $G$-variety is \emph{quasihomogeneous} 
if it contains a dense open orbit. 
In \cite{P}, Popov gives a complete classification of $3$-dimensional affine normal quasihomogeneous 
$SL(2)$-varieties:  
they are uniquely determined by a pair of numbers 
$(l,m) \in \{\mathbb Q \cap (0, 1]\} \times \mathbb N$. 
Many of their properties are studied in \cite{P}. For instance, he proves that the corresponding variety $E_{l,m}$ to a pair $(l,m)$ is smooth if and only if $l=1$; 
otherwise $E_{l,m}$ contains a unique $SL(2)$-invariant singular point. 
The varieties $E_{l,m}$ with $l <1$ are one of the simplest non-trivial examples of affine quasihomogeneous varieties with singularities, and what makes the study of such $SL(2)$-varieties interesting and intriguing is the classification 
due to Popov. 
It is known that the singularity of $\E$ 
is Cohen-Macaulay but not Gorenstein (\cite{P}, \cite{Pan}, see also \cite{BH}), and its minimal equivariant resolution is given in \cite{Pa}. 
In \cite{BH}, Batyrev and Haddad show by using Popov's classification that every $3$-dimensional affine normal quasihomogeneous $SL(2)$-variety $\E$ can be described as a  categorical quotient of an affine hypersurface $H_{q-p}$ in $\C^5$ modulo an action of $\C^* \times \mu_m$, where we write $l=p/q$ as an irreducible fraction. 
Also, according to \cite{BH}, an $SL(2)$-variety $\E$ admits an action of $\C^*$ and becomes a spherical  $SL(2) \times \C^*$-variety with respect to the Borel subgroup $B \times \C^*$. 
Furthermore, it is shown in \cite{BH} that there is an $SL(2) \times \C^*$-equivariant flip diagram 
\[
\xymatrix@R=5pt{
\E^- \ar@{.>}[rr] \ar[rd] & & \E^+ \ar[ld] \\
& \E &
},
\]
where $\E^{-}$ and $\E^{+}$ are different GIT quotients of $H_{q-p}$ corresponding to some non-trivial characters, and that the varieties $\E$, $\E^-$, and $\E^+$ are dominated by the weighted blow-up $\E'=Bl_O^{\omega}(\E)$ of $\E$ with a weight $\omega$ defined by the $\C^*$-action on $\E$. The weight $\omega$ is trivial if and only if $\E$ is toric.  

This article may be considered as a continuation of above-mentioned works on $\E$, especially of \cite{BH}: we study $3$-dimensional affine normal  
quasihomogeneous $SL(2)$-varieties via an invariant Hilbert scheme by using the GIT quotient description due to Batyrev and Haddad. 
Namely, we study the $SL(2)$-variety $\E$ by means of  the invariant Hilbert scheme $\mathcal H=\Hilb^{\C^* \times \mu_m}_h(H_{q-p})$ associated with the triple $(\C^* \times \mu_m, H_{q-p}, h)$, where $h$ is the Hilbert function of the general fibers of the quotient morphism $\pi : H_{q-p} \to H_{q-p} \git (\C^* \times \mu_m) \cong \E$, and of the corresponding Hilbert--Chow morphism
\[
\xymatrix{\gamma : \mathcal H \ar[r] &  \E},
\]
which is an isomorphism over the dense open $SL(2)$-orbit $\mathfrak U$ in $\E$ (see \S \ref{s-flat}).  
Our main result proves that $\gamma$ gives a desingularization of $\E$. 
The smoothness of  the invariant Hilbert scheme $\mathcal H$ 
is independent of the pair of numbers $(l,m)$, but the behavior of the resolution $\gamma$ does depend on it: it depends on whether $\E$ is toric or not.  
Here we  remark that a necessary and sufficient condition for $E_{l,m}$ being 
a toric variety is given in \cite{G} (see also {\cite[Corollary 2.7]{BH}}) in terms of the numbers $l=p/q$ and $m$: an affine variety $\E$ is toric if and only if $q-p$ divides $m$. 
In both toric and non-toric case, we see that the restriction of $\gamma$ to the main component $\mathcal H^{main}=\overline{\gamma^{-1}(\mathfrak U)}$ of the invariant Hilbert scheme $\mathcal H$ factors equivariantly through the weighted blow-up $\E'$: 
\[
\xymatrix{
\mathcal H^{main} \ar[rr]^{\; \; \; \; \psi} \ar[rrd]_{\gamma|_{\mathcal H^{main}}} &  & \E' \ar[d] \\
&  & \E
}
\]
If $\E$ is toric then we see that $\psi$ is an isomorphism, while if $\E$ is non-toric then we see by an easy observation that $\psi$ is not an isomorphism. 
It is known that if $\E$ is non-toric then the weighted blow-up $\E'$ contains a family of cyclic quotient singularities $\C^2/\mu_b$ (\cite{BH}), and  
therefore a natural candidate for $\mathcal H^{main}$ is the minimal resolution of these quotient singularities. 
\begin{MainTheorem}
With the above notation, we have the following. 
\begin{itemize}
\item [\rm(i)] The invariant Hilbert scheme $\mathcal H$ coincides with the main component $\mathcal H^{main}$, and the Hilbert--Chow morphism $\gamma$ is  an equivariant resolution of singularities of $\E$. 
\item [\rm(ii)] If $\E$ is toric, then $\mathcal H$ is isomorphic to the blow-up $Bl_O(\E)$ of $\E$ at the origin. 
\item [\rm(iii)] If $\E$ is non-toric, then $\mathcal H$ is isomorphic to the minimal resolution of the weighted blow-up $\E'$.  
\end{itemize}
\end{MainTheorem}
We need to discuss the toric case and the non-toric case separately, which is because of the difference appeared in items (ii) and (iii) of Main Theorem. 
The present article treats the toric case, and the non-toric case is considered in the forthcoming article \cite{Ku}. 

This article is organized as follows:  
in the first section, we introduce the 
invariant Hilbert scheme and summarize some 
general properties that we use later. 
In \S \ref{s-pop}, we review Popov's classification of $3$-dimensional affine normal quasihomogeneous $SL(2)$-varieties and some related works of Kraft, Panyushev,  Ga{\u\i}fullin, and 
Batyrev--Haddad (\cite{K, Pa, Pan, G, BH}). 
Afterwards, we calculate the Hilbert function of the general fibers of the quotient morphism $\pi : H_{q-p} \to \E$ (Corollary \ref{hilbfunc}). 
\S \ref{s-ideal} is the heart of this article, where 
we show that $\mathcal H^{main}$ contains families of ideals $I_s$ and $J_s$ parametrized by $s \in \C$ (Theorems \ref{idealU} and  \ref{idealD}). 
In \S \ref{s-borel}, we see that $J_0$ is the unique Borel-fixed  point in $\mathcal H$ (Proposition \ref{candidate}), which immediately implies that $\mathcal H$ coincides with $\mathcal H^{main}$ and that $\mathcal H$ is smooth (Corollary \ref{toric sm}).  In the last section, we show that ideals contained in $\mathcal H$ are exactly the ideals $I_s$ and  $J_s$, and their  $SL(2)$-translates (Lemma \ref{bij}). By using Lemma \ref{bij}, we prove that $\mathcal H$ is isomorphic to the blow-up $Bl_O(\E)$ of $\E$ (Theorem \ref{maintheorem}). 
\section{Generalities on the invariant Hilbert scheme}\label{s-hilb}
Brion's survey \cite{B} offers a detailed introduction to the invariant Hilbert scheme. Here we present some definitions and properties on invariant Hilbert schemes that we will use 
later. 

Let $G$ be a reductive algebraic group, and $X$ an affine $G$-scheme 
of finite type. 
We denote by $\Irr(G)$ the set of 
isomorphism classes of irreducible representations of $G$. 
For any $G$-module $V$, we have its isotypical decomposition:
\[
V \cong \bigoplus _{M \in \Irr(G)}\Hom^G(M, V) \otimes M.
\]
We call the dimension of $\Hom^G(M, V)$ the \emph{multiplicity} of 
$M$ in $V$. 
If the multiplicity is finite for every $M \in \Irr(G)$, we can define a function
\[
h_V : \Irr(G) \to \mathbb Z_{\geq 0},\quad  M \mapsto h_V(M):=\dim \Hom^G(M, V), 
\]
which is called the \emph{Hilbert function} of $V$. 

Let $S$ be a Noetherian scheme on which $G$ acts trivially, and 
$Z$ a closed $G$-subscheme of $X \times S$. 
We denote the projection $Z \to S$ by $f$. Then, according to \cite{B}, there is a 
decomposition of $f_*\mathcal O_Z$ as an $\mathcal O_S$-$G$-module
\[
f_*\mathcal O_Z \cong \bigoplus _{M \in \Irr(G)} \mathcal F_M \otimes M,
\]
where sheaves of covariants $\mathcal F_M:=
\mathcal Hom^G_{\mathcal O_S}(M \otimes \mathcal O_S, f_*\mathcal O_Z)$
 are sheaves of $\mathcal O_S$-modules.  
 Assume that each $\mathcal F_M$ is a coherent $\mathcal O_S$-module. 
 Then, each of them is locally-free if and only if it is flat over $S$. 
\begin{definition}[{\cite[Definition 1.5]{AB}}]
Let $h : \Irr(G) \to \mathbb Z_{\geq 0}$ be a Hilbert function. For a given triple $(G, X, h)$, the associated functor
\[
\xymatrix{
\mathcal{H}ilb^G_h(X) : (\mbox{Sch})^{\mbox{op}} \ar[r] & (\mbox{Sets})
}
\]
\[
S \mapsto 
\left\{
\raise20pt\hbox{\xymatrix{
Z \ar[rd]_{f} \ar@{}[r]|*{\subset\; \; } & X \times S \ar[d]^{\mbox{pr}_2}\\
 & S
 }}
 \Bigggggg |\; 
\begin{matrix}
 Z\; \mbox{is a closed}\; G\mbox{-subscheme of}\; X \times S; \quad \\
 f\; \mbox{is a flat morphism};\qquad \qquad \qquad \quad\; \; \\
  f_*\mathcal O_Z \cong \bigoplus_{M \in \Irr(G)} \mathcal F_M \otimes M;\qquad \qquad \; \; \; \;\\
 \mathcal F_M\; \mbox{is locally-free of rank}\; h(M)\; \mbox{over}\; \mathcal O_S
\end{matrix}
 \right\}
\]
is called the \emph{invariant Hilbert functor}.
\end{definition}
\begin{theorem}[{\cite[Theorem 2.11]{B}}]
The invariant Hilbert functor is represented by a quasiprojective scheme 
$\Hilb^G_h(X)$, 
the invariant Hilbert scheme associated with 
the affine $G$-scheme $X$ and the Hilbert function $h$.
We denote by $\Univ^G_h(X) \subset X \times \Hilb^G_h(X)$ the universal family over $\Hilb^G_h(X)$. 
\end{theorem}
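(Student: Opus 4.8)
The plan is to exhibit the invariant Hilbert functor as a closed subfunctor of a representable one, eventually reducing to the multigraded Hilbert scheme of Haiman--Sturmfels \cite{HS}; this is the route taken by Alexeev--Brion \cite{AB}.

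\textbf{Reduction to a $G$-module.} Since $\C[X]$ is a finitely generated $G$-algebra and $G$ is reductive, one can pick a finite-dimensional $G$-submodule $W\subset\C[X]$ generating $\C[X]$, whence a $G$-equivariant closed embedding $X\hookrightarrow V:=W^{*}$. For a family $Z\subset X\times S$ as in the functor, viewing it inside $V\times S$ changes neither $f_{*}\mathcal O_{Z}$ nor the sheaves of covariants $\mathcal F_{M}$; conversely a $G$-stable family $Z\subset V\times S$ factors through $X\times S$ exactly when the composite $\mathcal I_{X}\otimes_{\C}\mathcal O_{S}\to\mathcal O_{V\times S}\to\mathcal O_{Z}$ vanishes, a closed condition on $S$. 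Hence $\mathcal{H}ilb^{G}_{h}(X)$ is a closed subfunctor of $\mathcal{H}ilb^{G}_{h}(V)$, and it suffices to represent the latter.

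\textbf{Reduction to the multigraded case.} Write $R=\C[V]=\Sym(V^{*})$, with each graded piece $R_{d}=\Sym^{d}(V^{*})$ a finite-dimensional $G$-module. A closed point of $\Hilb^{G}_{h}(V)$ should be a $G$-stable ideal $I\subset R$ with $\dim\Hom^{G}(M,R/I)=h(M)$ for all $M$; the feature absent from the ordinary and the multigraded Hilbert schemes is that such an $I$ need not be homogeneous for the degree grading. I would linearise the problem by passing to $U$-invariants, where $U$ is the unipotent radical of a Borel subgroup $B=TU\subset G$: one has $\Hom^{G}(M,N)\cong(N^{U})_{\lambda}$ for the highest weight $\lambda$ of $M$, the formation of $U$-invariants is exact on $G$-modules in characteristic zero and on the relevant sheaves over $S$ (because $(-)^{G}$ is exact for reductive $G$), and $R^{U}$ is a finitely generated $\hat T$-graded algebra by Grosshans' theorem. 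Then $\mathcal I\mapsto\mathcal I^{U}=\mathcal I\cap(R^{U}\otimes\mathcal O_{S})$ defines a monomorphism from $\mathcal{H}ilb^{G}_{h}(V)$ to the multigraded Hilbert functor of $R^{U}$ for the grading group $\hat T$ and Hilbert function $\lambda\mapsto h(M_{\lambda})$; choosing a $\hat T$-graded polynomial presentation of $R^{U}$ and combining \cite{HS} with the closed-subfunctor argument above, this target is represented by a quasi-projective scheme. Representability of $\mathcal{H}ilb^{G}_{h}(V)$ then follows once the image of the monomorphism is shown to be a closed subfunctor, and the universal family $\Univ^{G}_{h}(X)$ is recovered as the value of the functor at $\id_{\Hilb^{G}_{h}(X)}$.

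\textbf{The main obstacle.} I expect the real difficulty to be precisely this last point: showing that the $\hat T$-homogeneous ideals of $R^{U}$ of the form $\mathcal I^{U}$ are cut out by \emph{closed} conditions compatible with base change. Two issues combine. First, the support of $h$ may be infinite, so one cannot use a finite product of Grassmannians outright; a Gotzmann-style persistence result (the technical heart of \cite{HS}) is needed to reduce the data to finitely many determining degrees. Second, $\mathcal I\mapsto\mathcal I^{U}$ is injective but not surjective onto all $\hat T$-homogeneous ideals of $R^{U}$ --- the $G$-stable ideal of $R$ generated by such an ideal can acquire strictly larger $U$-invariants --- so one must single out the ``$G$-compatible'' ideals of $R^{U}$ algebraically, via the Grosshans transfer between $R$ and $R^{U}$, and check that this is the vanishing of a coherent sheaf built from the sheaves of covariants. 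Controlling the boundedness and the $G$-compatibility simultaneously is where the work lies; the remainder is formal.
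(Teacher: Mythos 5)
The paper does not prove this statement: it is quoted verbatim from Brion's survey \cite{B} (the result is originally due to Alexeev--Brion \cite{AB}), so there is no in-paper argument to compare against. Your sketch accurately reconstructs the route taken in \cite{AB}: embed $X$ equivariantly into a $G$-module $V$ so that $\mathcal{H}ilb^G_h(X)$ becomes a closed subfunctor of $\mathcal{H}ilb^G_h(V)$, pass to $U$-invariants to convert the $G$-action into a grading by the weight lattice, invoke Grosshans finite generation of $R^U$ together with the Haiman--Sturmfels multigraded Hilbert scheme \cite{HS}, and cut out the image by closed conditions. You have also correctly identified the two genuinely hard points---boundedness of the determining degrees and the closedness of the ``$G$-compatibility'' condition on a graded ideal of $R^U$---but you defer both rather than establishing them, so what you have is a faithful outline of the standard proof rather than a complete one.
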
 
We denote by $T_{[Z]} \Hilb^G_{h}(X)$ the Zariski tangent space to the invariant Hilbert scheme $\Hilb^G_h(X)$ at a closed point $[Z]$. 
We sometimes represent a closed point of $\Hilb^G_h(X)$ by the corresponding ideal $I_Z$ if there is no danger of confusion. 
\begin{theorem}[{\cite[Proposition 3.5]{B}}]\label{tangent}
With the above notation, we have 
\[
T_{[Z]} \Hilb^G_h(X) \cong \Hom^G_{\C[X]}(I_Z, \C[X]/I_Z). 
\]
\end{theorem}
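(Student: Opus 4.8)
The plan is to compute the tangent space by the functor-of-points method and then to run the classical dual-numbers deformation argument $G$-equivariantly. Let $D:=\C[\varepsilon]/(\varepsilon^{2})$ be the ring of dual numbers, on which $G$ acts trivially. Since $\Hilb^G_h(X)$ represents $\mathcal{H}ilb^G_h(X)$, the space $T_{[Z]}\Hilb^G_h(X)$ is the fibre over $[Z]$ of $\mathcal{H}ilb^G_h(X)(\Spec D)\to\mathcal{H}ilb^G_h(X)(\Spec\C)$; unwinding the functor, an element of this fibre is a $G$-stable ideal $\widetilde I\subset\C[X]\otimes_{\C}D$ with $\widetilde I\bmod\varepsilon=I_Z$ such that $A:=(\C[X]\otimes_{\C}D)/\widetilde I$ is flat over $D$ and each covariant module $\mathcal F_M:=\Hom^G_{D}(M\otimes_{\C}D,A)$ is free of rank $h(M)$. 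First I would check that the covariant-freeness clause is automatic: over the Artinian local ring $D$ a finitely generated flat module is free; from the isotypical decomposition $A\cong\bigoplus_{M}\mathcal F_M\otimes_{\C}M$ the module $\mathcal F_M$ is a $D$-module direct summand of the flat module $A$, hence flat, hence free; and its rank is $\dim_{\C}(\mathcal F_M\otimes_{D}\C)=\dim_{\C}\Hom^G_{\C}(M,\C[X]/I_Z)=h(M)$ because $[Z]\in\Hilb^G_h(X)$. Hence $T_{[Z]}\Hilb^G_h(X)$ is precisely the set of $G$-stable ideals $\widetilde I\subset\C[X]\otimes_{\C}D$ with $\widetilde I\bmod\varepsilon=I_Z$ and $A$ flat over $D$.

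Next I would apply the standard dual-numbers dictionary, keeping track of the $G$-action. Flatness of $A$ over $D$ is equivalent to exactness of $0\to\C[X]/I_Z\xrightarrow{\,\cdot\varepsilon\,}A\to\C[X]/I_Z\to0$, i.e. to $\widetilde I\cap\varepsilon\,(\C[X]\otimes_{\C}D)=\varepsilon I_Z$. Granting this, every $a\in I_Z$ admits a lift $a+\varepsilon b\in\widetilde I$ with $b\in\C[X]$ well-defined modulo $I_Z$, and $a\mapsto\overline b$ defines a $\C[X]$-linear map $\varphi_{\widetilde I}\colon I_Z\to\C[X]/I_Z$; conversely, for $\varphi\in\Hom_{\C[X]}(I_Z,\C[X]/I_Z)$ the set $\widetilde I_{\varphi}:=\{\,a+\varepsilon b : a\in I_Z,\ b\in\C[X],\ \overline b=\varphi(a)\,\}$ is an ideal with $\widetilde I_{\varphi}\bmod\varepsilon=I_Z$ and the corresponding $A$ flat over $D$, and $\widetilde I\mapsto\varphi_{\widetilde I}$, $\varphi\mapsto\widetilde I_{\varphi}$ are mutually inverse $\C$-linear bijections. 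It remains to match the $G$-actions: for $g\in G$ one has $g\cdot(a+\varepsilon b)=ga+\varepsilon(gb)$ with $ga\in I_Z$ and $\overline{gb}=g\,\overline b=g\,\varphi(a)$, so $\widetilde I_{\varphi}$ is $G$-stable if and only if $\varphi(ga)=g\,\varphi(a)$ for all $g,a$, i.e. if and only if $\varphi$ lies in $\Hom^G_{\C[X]}(I_Z,\C[X]/I_Z)$. Since $X$ is affine this module of covariant homomorphisms coincides with its sheaf-theoretic counterpart, and the resulting $\C$-linear bijection is the asserted isomorphism.

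Apart from the first step, this is, essentially verbatim, the classical description of the tangent space of a Hilbert-type functor. The only genuinely invariant-theoretic point --- and the one I expect to require the most care --- is the verification that the requirement ``$\mathcal F_M$ locally free of rank $h(M)$'' in the definition of $\mathcal{H}ilb^G_h(X)$ imposes no constraint on first-order deformations beyond flatness of $A$; this uses that $G$ is linearly reductive, so that the functor of $G$-invariants is exact and commutes with the isotypical decompositions above, which is exactly what makes the equivariant computation run in step-by-step parallel with the non-equivariant one. (Finite-dimensionality of $\Hom^G_{\C[X]}(I_Z,\C[X]/I_Z)$ is not an issue, as $\Hilb^G_h(X)$ is of finite type.)
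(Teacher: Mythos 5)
The paper offers no proof of this statement: it is imported verbatim from Brion's survey (\cite[Proposition~3.5]{B}), and your argument is correct and is essentially the standard dual-numbers computation given there, i.e.\ identifying $T_{[Z]}\Hilb^G_h(X)$ with the fibre of $\mathcal{H}ilb^G_h(X)(\Spec\C[\varepsilon]/(\varepsilon^2))$ over $[Z]$ and matching flat $G$-stable lifts of $I_Z$ with equivariant $\C[X]$-module maps $I_Z\to\C[X]/I_Z$. Your preliminary observation that the locally-free-of-rank-$h(M)$ condition on the covariants is automatic for first-order deformations (via linear reductivity and freeness of flat modules over an Artinian local ring) is exactly the point that makes the equivariant case reduce to the classical one, and it is sound.
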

The invariant Hilbert scheme comes with  a projective morphism called the \emph{quotient-scheme map}, or the \emph{Hilbert--Chow morphism}. 
This is a generalization of the Hilbert--Chow morphism from the $G$-Hilbert scheme $G$-$\Hilb(X)$ to the quotient variety $X/G$ that sends a $G$-cluster to its support.  
The construction of the quotient-scheme map in a general 
setting is explained in \cite[\S 3.4]{B}. 
Here we restrict ourselves to the situation we consider in this article. 
Let 
\[
\xymatrix{
\pi : X \ar[r] &  X \git G:=\Spec(\C[X]^G)
}
\]
be the quotient morphism. 
If $X$ is irreducible, then by the generic flatness theorem,
 $\pi$ is flat over a non-empty 
open subset $Y_0$ of $X \git G$. According to \cite[\S 3.4]{B}, the scheme-theoretic 
fiber of $\pi$ at any closed point of $Y_0$ has the same Hilbert function. 
This special function is called the \emph{Hilbert function of the general fibers of $\pi$}, and we denote it by $h_X$. 
Since $h_X(0)=1$, where $0$ stands for the trivial representation of $G$, the associated 
quotient-scheme map is a morphism
\[
\gamma : \Hilb^G_{h_X}(X) \to X \git G,\quad  [Z] \mapsto Z \git G.
\]
\begin{theorem}[{\cite[Proposition 3.15]{B}}, see also {\cite[Theorem I.1.1]{Bud}}]
\label{commutative}
With the preceding notation, the diagram
\[
\xymatrix{
 \Univ^G_{h_X}(X) \ar[r]^{\quad \; \; \; \; \pr_1} \ar[d]_{\pr_2} & X \ar[d]^{\pi} \\
 \Hilb^G_{h_X}(X) \ar[r]_{\; \; \; \; \; \gamma} & X \git G
}
\]
commutes. Furthermore, the pullback of $\gamma$ to the flat locus $Y_0$ of $\pi$ 
is an isomorphism. 
\end{theorem}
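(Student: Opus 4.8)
The plan is to realise $\gamma$ through the universal family $\Univ^G_{h_X}(X)$ and then to produce an explicit inverse of $\gamma$ over $Y_0$. Write $\Univ:=\Univ^G_{h_X}(X)\subset X\times\Hilb^G_{h_X}(X)$ and let $\pr_1:\Univ\to X$, $\pr_2:\Univ\to\Hilb^G_{h_X}(X)$ be the two projections. Since $G$ acts trivially on $\Hilb^G_{h_X}(X)$, the affine morphism $\pr_2$ is $G$-invariant; because $h_X(0)=1$, the $G$-invariant part of $(\pr_2)_*\mathcal O_{\Univ}$ is a rank-one locally free sheaf generated by the constant function $1$, hence equals $\mathcal O_{\Hilb^G_{h_X}(X)}$, so $\Univ\git G\cong\Hilb^G_{h_X}(X)$. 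On the other hand $\pr_1$ is $G$-equivariant and $\pi$ is a categorical quotient, hence $\pi\circ\pr_1:\Univ\to X\git G$ is $G$-invariant and factors uniquely through $\Univ\git G=\Hilb^G_{h_X}(X)$; by the construction of the quotient-scheme map in \cite[\S 3.4]{B} the resulting morphism is precisely $\gamma$. This gives $\pi\circ\pr_1=\gamma\circ\pr_2$, i.e. the square commutes.

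For the second assertion, set $X_0:=\pi^{-1}(Y_0)=X\times_{X\git G}Y_0$, a $G$-stable closed subscheme of $X\times Y_0$, and let $f:X_0\to Y_0$ be the projection, which is flat by the choice of $Y_0$. In the decomposition $f_*\mathcal O_{X_0}\cong\bigoplus_{M\in\Irr(G)}\mathcal F_M\otimes M$, each $\mathcal F_M$ is a coherent $\mathcal O_{Y_0}$-module (finiteness of the modules of covariants over $\C[X]^G$), and since forming $G$-invariants is an exact functor for the linearly reductive group $G$ and thus commutes with base change, flatness of $f$ forces each $\mathcal F_M$ to be flat, hence locally free, and its rank at every point of $Y_0$ equals $h_X(M)$ by the definition of the Hilbert function of the general fibers of $\pi$. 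Thus $(X_0\subset X\times Y_0)\to Y_0$ is classified by the invariant Hilbert functor and yields a morphism $\sigma:Y_0\to\Hilb^G_{h_X}(X)$ with $\sigma(y)=[\pi^{-1}(y)]$, which factors through $\gamma^{-1}(Y_0)$. For $y\in Y_0$, linear reductivity gives $\C[\pi^{-1}(y)]^G=(\C[X]\otimes_{\C[X]^G}\kappa(y))^G=\kappa(y)$, so $\gamma(\sigma(y))=y$; running this computation over an arbitrary $Y_0$-scheme gives $\gamma\circ\sigma=\id_{Y_0}$. Conversely, if $[Z]\in\gamma^{-1}(Y_0)$ with $\gamma([Z])=y$, then the composite $\C[X]^G\to\C[Z]^G$ is the quotient $\C[X]^G\to\kappa(y)$, so $\mathfrak m_y\C[X]\subset I_Z$ and $Z\subset\pi^{-1}(y)$; the induced $G$-equivariant surjection $\C[\pi^{-1}(y)]\twoheadrightarrow\C[Z]$ is, on each isotypical component, a surjection of $\C$-vector spaces of the same finite dimension $h_X(M)$, hence an isomorphism, so $Z=\pi^{-1}(y)$ and $\sigma(\gamma([Z]))=[Z]$.

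It remains to upgrade these two identities on points to an isomorphism of schemes $\sigma:Y_0\xrightarrow{\;\sim\;}\gamma^{-1}(Y_0)$. Since $\gamma\circ\sigma=\id_{Y_0}$, the morphism $\sigma$ is a section of the separated morphism $\gamma$, hence a closed immersion, and by the computation above it is bijective onto $\gamma^{-1}(Y_0)$; carrying out the isotypical-multiplicity argument relatively, over an arbitrary base scheme $S$, shows that any family classified by a point of $\gamma^{-1}(Y_0)(S)$ is the pullback along $S\to Y_0$ of the subscheme $X_0\subset X\times Y_0$, which identifies $\gamma^{-1}(Y_0)$ with $Y_0$ as functors. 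I expect the only genuine difficulty to be precisely this last passage from points to schemes: it requires controlling families over possibly non-reduced bases, which is exactly where one uses that the modules of covariants are coherent and that forming $G$-invariants commutes with arbitrary base change for the linearly reductive group $G$; the commutativity of the square, by contrast, is purely formal.
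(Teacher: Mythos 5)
The paper offers no proof of this statement: it is imported wholesale from \cite[Proposition 3.15]{B} (see also \cite[Theorem I.1.1]{Bud}), so there is no in-paper argument to measure yours against. Your proof correctly reconstructs the standard argument of those references: commutativity is essentially the definition of $\gamma$ once $h_X(0)=1$ identifies $\Univ^G_{h_X}(X)\git G$ with $\Hilb^G_{h_X}(X)$, and the inverse over $Y_0$ is the classifying morphism of the flat family $\pi^{-1}(Y_0)\to Y_0$, with the converse inclusion $Z=\pi^{-1}(y)$ forced by comparing isotypical multiplicities. You also correctly flag the only delicate point---upgrading the pointwise bijection to a scheme isomorphism over possibly non-reduced bases via the relative equal-rank argument---and the sketch you give of it is the right one.
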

The Zariski closure $\mathcal H^{main}:=\overline{\gamma^{-1}(Y_0)}$ is an irreducible 
component of the invariant Hilbert scheme $\Hilb^G_{h_X}(X)$, and is called the \emph{main component} of $\Hilb^G_{h_X}(X)$ (\cite[Definition 2.4]{Bec}, \cite[Definition 2.3]{Ter}). 
Since the restriction of the quotient-scheme map to the main 
component
\[
\xymatrix{
\gamma|_{\mathcal H^{main}} : \mathcal H^{main} \ar[r] &  X \git G
}
\]
is projective and birational, it is natural to ask whether $\gamma|_{\mathcal H^{main}}$ gives a desingularization of $X \git G$. 

Let us consider a situation that there is an action on $X$ by another connected reductive 
algebraic group $G'$. 
If the action of $G'$ on $X$ commutes with that of $G$, then $G'$ acts both on 
$\Hilb^G_{h_X}(X)$ and on $\Univ^G_{h_X}(X)$,  and every morphism appeared in Theorem \ref{commutative} is $G'$-equivariant (\cite[Proposition 3.10]{B}). 
We especially consider the action of a Borel subgroup $B' \subset G'$ on $\Hilb^G_{h_X}(X)$. 
Let $\mathcal H^{B'} \subset \Hilb^G_{h_X}(X)$ be the set of fixed points for the action of the 
Borel subgroup $B'$. 
\begin{theorem}[{\cite[Lemmas 1.6 and 1.7]{Ter14}}]\label{Borel}
Suppose that $X \git G$ has a unique closed orbit and that this orbit is a point. Then the 
following properties are true. 
\begin{itemize}
\item [{\rm(i)}] Each $G'$-stable closed subset of $\Hilb^G_{h_X}(X)$ contains at 
least one fixed point for the action of the Borel subgroup $B'$. Moreover, 
if $\Hilb^G_{h_X}(X)$ has a unique $B'$-fixed point, then $\Hilb^G_{h_X}(X)$ is connected. 
\item [{\rm(ii)}] The following are equivalent:
\begin{itemize}
\item [\rm{(a)}] $\Hilb^G_{h_X}(X)=\mathcal H^{main}$ and 
$\Hilb^G_{h_X}(X)$ is smooth;
\item [\rm{(b)}] $\dim T_{[Z]}\Hilb^G_{h_X}(X)=\dim \mathcal H^{main}$ for any $[Z] \in \mathcal H^{B'}$, and 
$\Hilb^G_{h_X}(X)$ is connected. 
\end{itemize}
\end{itemize}
\end{theorem}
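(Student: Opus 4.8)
The plan is to deduce both parts from Borel's fixed point theorem, applied not to $\mathcal H:=\Hilb^G_{h_X}(X)$ itself (which is only quasiprojective) but to the fibres of the projective, $G'$-equivariant Hilbert--Chow morphism $\gamma$, using the hypothesis on $X\git G$ to confine every $G'$-stable closed subset into one such fibre.

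For (i), let $o\in X\git G$ be the unique closed $G'$-orbit; being a point, it is a $G'$-fixed point. Given a $G'$-stable closed subset $Z\subseteq\mathcal H$, the image $\gamma(Z)$ is closed (as $\gamma$ is projective) and $G'$-stable, hence contains a closed $G'$-orbit, which must be $\{o\}$; thus $Z\cap\gamma^{-1}(o)\neq\emptyset$. Since $\gamma^{-1}(o)$ is proper over $\C$ and quasiprojective, it is projective, so $Z\cap\gamma^{-1}(o)$ is a nonempty projective scheme acted on by the connected solvable group $B'$, and Borel's fixed point theorem yields a $B'$-fixed point inside $Z$. For the ``moreover'' part I would note that $\mathcal H$ has finitely many connected components and that each is $G'$-stable (the image of $G'\times C\to\mathcal H$ is connected and contains the component $C$, hence equals $C$); applying the first assertion to each connected component produces a $B'$-fixed point in each, so a unique $B'$-fixed point forces $\mathcal H$ to be connected.

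For (ii), the implication (a)$\Rightarrow$(b) is immediate: if $\mathcal H=\mathcal H^{main}$ is smooth then it is irreducible, hence connected, and $\dim T_{[Z]}\mathcal H=\dim\mathcal H=\dim\mathcal H^{main}$ at every point. For (b)$\Rightarrow$(a) I would set $d:=\dim\mathcal H^{main}$, recall that $\mathcal H^{main}$ is an irreducible component, and proceed in three steps. First, every irreducible component of $\mathcal H$ is $G'$-stable (same connectedness argument, now using that $G'$ is irreducible), so by (i) it contains some $[Z]\in\mathcal H^{B'}$; there its dimension is at most $\dim_{[Z]}\mathcal H\leq\dim T_{[Z]}\mathcal H=d$, so all components have dimension $\leq d$. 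Second, suppose $\mathcal H\neq\mathcal H^{main}$; since $\mathcal H$ is connected and $\mathcal H^{main}$ is a whole irreducible component, some other component $Z_1$ must meet $\mathcal H^{main}$ (otherwise $\mathcal H^{main}$ would be open and closed, hence all of $\mathcal H$), and the $G'$-stable closed set $\mathcal H^{main}\cap Z_1$ contains, by (i), a point $q\in\mathcal H^{B'}$; but then $\mathcal O_{\mathcal H,q}$ has at least two minimal primes, hence is not regular, so $\dim T_q\mathcal H>\dim_q\mathcal H\geq\dim\mathcal H^{main}=d$, contradicting (b). Hence $\mathcal H=\mathcal H^{main}$ is irreducible of dimension $d$. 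Third, $\Sing(\mathcal H)$ is closed and $G'$-stable; if it were nonempty, (i) would give $p\in\mathcal H^{B'}\cap\Sing(\mathcal H)$, where non-regularity of $\mathcal O_{\mathcal H,p}$ together with irreducibility forces $\dim T_p\mathcal H>\dim_p\mathcal H=d$, again contradicting (b). Therefore $\mathcal H$ is smooth.

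The main obstacle I anticipate is exactly the non-projectivity of $\mathcal H$: the whole argument works only because the hypothesis ``$X\git G$ has a unique closed orbit, and it is a point'' pushes every $G'$-stable closed subset into the projective fibre $\gamma^{-1}(o)$, where Borel's theorem applies. The secondary point demanding care is the translation between the numerical condition in (b) and the local geometry of $\mathcal H$ — namely that a point lying on two components, or a singular point, has strictly larger Zariski tangent space than local dimension — which is what makes equality of dimensions at $B'$-fixed points strong enough to detect both irreducibility and smoothness at once.
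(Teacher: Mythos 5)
The paper does not prove this statement; it is quoted verbatim from \cite[Lemmas 1.6 and 1.7]{Ter14}, so there is no in-paper argument to compare against. Your proof is correct and is essentially the standard argument from that reference: the hypothesis that $X\git G$ has a unique closed orbit equal to a point forces every nonempty $G'$-stable closed subset to meet the projective fibre $\gamma^{-1}(o)$, where Borel's fixed point theorem applies, and the dimension condition in (b) then detects both reducibility (a $B'$-fixed point on two components has $\dim T > \dim$) and singularity at $B'$-fixed points. All the supporting facts you invoke (closedness of images under the projective $\gamma$, existence of a closed orbit in a closed $G'$-stable subset, $G'$-stability of connected and irreducible components, and the characterization of regularity via $\dim T_{[Z]}=\dim_{[Z]}$) are used correctly.
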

There is one more useful tool to study the invariant Hilbert scheme. 
According to \cite[\S 4.2]{Bec}, any invariant Hilbert scheme can be embedded into a product of Grassmannians (see also \cite{Ter14}). 
Let $G, X, h_X$, and $G'$ as above. 
For any irreducible representation 
$M \in \Irr(G)$, there is a finite-dimensional $G'$-module $F_M$ that 
generates $\Hom^G(M, \C[X])$ as $\C[X]^G$-modules  (\cite[Proposition 4.2]{Bec}). 
Let $[Z] \in \Hilb^G_{h_X}(X)$, and 
\[
\xymatrix{
f_{M,Z} : F_M \ar[r] &  \Hom^G(M, \C[Z])
}
\] 
be the composition of the inclusion 
$F_M \hookrightarrow  \Hom^G(M, \C[X])$ and the natural surjection $\Hom^G(M, \C[X]) \to \Hom^G(M, \C[Z])$. 
Then, the quotient vector space $F_M/\Ker f_{M,Z}$ defines a point in the Grassmannian $\Gr(h_X(M), F_M^{\vee})$. 
In this way, we obtain a $G'$-equivariant morphism
\[
\eta_M : 
\Hilb^G_{h_X}(X) \to \Gr(h_X(M), F_M^{\vee}), \quad 
[Z] \mapsto  F_M/\Ker f_{M,Z}.
\]
Moreover, there is a finite subset $\mathcal M \subset \Irr(G)$ 
such that the morphism
\[
\xymatrix{
\gamma \times\prod_{M \in \mathcal M} \eta_M : 
\Hilb^G_{h_X}(X) \ar[r] & X \git G \times \prod_{M \in \mathcal M} \Gr(h_X(M), F_M^{\vee})
}
\]
is a closed immersion. 
\section{Affine normal quasihomogeneous $SL(2)$-varieties}\label{s-pop}
In \cite{P}, Popov gives a complete classification of affine normal quasihomogeneous $SL(2)$-varieties. 
Consult also the book of Kraft \cite{K}. 
\begin{theorem}[\cite{P}]\label{popov}
Every 3-dimensional affine normal quasihomogeneous $SL(2)$-variety containing more than one orbit is uniquely determined by a pair of numbers 
$(l,m) \in \{\mathbb Q \cap (0,1]\} \times \mathbb N$. 
\end{theorem}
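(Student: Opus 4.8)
The plan is to attach to such a variety two numerical invariants --- the order of a generic isotropy group, and a ``height'' measuring how the boundary divisor is glued on --- and to show that this pair determines the variety while every admissible pair is realized. So let $X$ be a $3$-dimensional affine normal quasihomogeneous $SL(2)$-variety with more than one orbit, and let $\mathcal O\subset X$ be its dense orbit. Since $\dim SL(2)=3$ the isotropy group $H$ of a point of $\mathcal O$ is finite, so $\mathcal O\cong SL(2)/H$. First I would observe that $\C[X]^{SL(2)}$ injects into $\C[\mathcal O]^{SL(2)}=\C$, so $X$ has a unique closed orbit; being closed in an affine variety it is affine, hence not $1$-dimensional (a closed $1$-dimensional orbit would be $SL(2)/P\cong\mathbb P^1$, which is projective). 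If $X\smallsetminus\mathcal O$ had codimension $\geq 2$ it would be a finite set of fixed points, whence $\C[X]=\C[\mathcal O]=\C[SL(2)/H]$ by normality (the homogeneous space $SL(2)/H$ being affine) and $X=SL(2)/H$ would have a single orbit --- a contradiction. So $X\smallsetminus\mathcal O$ is a nonempty union of $SL(2)$-stable prime divisors, and in particular $X$ contains a $2$-dimensional orbit.

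Next I would pin down $H$ up to conjugacy. Let $\mathcal O_2=SL(2)/H'$ be a $2$-dimensional orbit and $x\in\mathcal O_2$; by Luna's slice theorem a transverse slice $S\cong\mathbb A^1$ is $H'$-stable with $H'$ acting linearly, and $S\smallsetminus\{x\}\subset\mathcal O$ (the orbit of a nonzero slice point has dimension $3$). The identity component of $H'$ must be a maximal torus $T$: it cannot be unipotent, since a $1$-dimensional unipotent group has no nontrivial character, so it would act trivially on the line $S$ and therefore lie in the isotropy group of every point of $S\smallsetminus\{x\}\subset\mathcal O$, contradicting finiteness of $H$; and $H'\neq N_{SL(2)}(T)$, because every character of $N_{SL(2)}(T)$ is trivial on $T$ (as $w t w^{-1}=t^{-1}$ forces $\chi(t)^2=1$), which would again make $T$ act trivially on $S$. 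Hence $H'=T$, acting on $S\cong\mathbb A^1$ with a nonzero weight of absolute value $m$, and the isotropy group of a generic point of $S$ --- that is, $H$ up to conjugacy --- is the cyclic group $C_m\subset T$ of order $m$. This is the invariant $m\in\mathbb N$, and it identifies $\C[\mathcal O]=\C[SL(2)]^{C_m}=\bigoplus_{n\ge 0}V_n\otimes(V_n)^{C_m}$, where $V_n$ denotes the irreducible $SL(2)$-module of dimension $n+1$.

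Finally I would extract the height. Right translation by $T/C_m\cong\C^*$ makes $\C[\mathcal O]$ into a $\mathbb Z$-graded algebra commuting with $SL(2)$: on the weight-$j$ line of $(V_n)^{C_m}$ (so $m\mid j$, $|j|\le n$, $j\equiv n\pmod 2$) the group $T/C_m$ acts with weight $j/m$. Passing to highest-weight vectors, $\C[\mathcal O]^U\cong\bigoplus_n(V_n)^{C_m}$ is a commutative algebra bigraded by the left torus (by $n$) and by $T/C_m$ (by $j/m$), with all bigraded pieces at most one-dimensional; it is therefore the semigroup algebra of the lattice points of a two-dimensional rational cone. One then shows that $\C[X]^U\subset\C[\mathcal O]^U$ is again bigraded --- equivalently, that the $\C^*$-action extends from $\mathcal O$ to all of $X$, so that $X$ becomes a spherical $SL(2)\times\C^*$-variety --- and hence is the semigroup algebra of the lattice points in a subcone. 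A direct analysis of which subcones give a normal affine $SL(2)$-variety with dense orbit $\mathcal O$ and at least one boundary divisor shows that, up to the Weyl symmetry $j\mapsto -j$, these subcones form a one-parameter family indexed by a rational number $l$, which the inclusion into the ambient cone confines to $(0,1]$; this is the height. Writing $l=p/q$ in lowest terms, the pair $(l,m)$ reconstructs $\C[X]$ as a graded $SL(2)$-algebra, which gives uniqueness; conversely every $(l,m)\in(\mathbb Q\cap(0,1])\times\mathbb N$ is realized with exactly these invariants by an explicit model, e.g.\ the Batyrev--Haddad categorical quotient $H_{q-p}\git(\C^*\times\mu_m)$, or the closure of the $SL(2)$-orbit of a suitable vector in $V_p\oplus V_q$ modulo $C_m$.

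The main obstacle is this last step: proving that the $\C^*$-action on the open orbit extends to $X$ --- so that $\C[X]^U$ is genuinely bigraded and the problem becomes a combinatorial one about subcones --- and then carrying out the elementary but delicate bookkeeping that the remaining freedom is exactly one rational parameter with range $\mathbb Q\cap(0,1]$, including the treatment of the degenerate configurations (a second $2$-dimensional orbit, or the absence of a fixed point, which is where the smooth case $l=1$ sits).
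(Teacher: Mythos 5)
First, a remark on the comparison you asked for: the paper contains no proof of this statement --- Theorem \ref{popov} is quoted from Popov \cite{P} --- so your attempt can only be checked against the classification itself, and checked that way it has a genuine error. Your identification of the isotropy groups rests on applying Luna's slice theorem at a point $x$ of a $2$-dimensional orbit, but in the singular case that orbit is \emph{not closed} (you yourself showed the unique closed orbit is then the fixed point), and the \'etale slice theorem is available only at points with closed orbit, equivalently with reductive stabilizer. The conclusion you extract from the slice is in fact false: for $l<1$ the $2$-dimensional orbit of $E_{l,m}$ is $\mathfrak D\cong SL(2)/U_{a(q+p)}$ (see \S\ref{s-pop}), whose isotropy group has \emph{unipotent} identity component --- exactly the case your argument purports to exclude. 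A unipotent group indeed has no nontrivial characters, but it need not fix any curve transverse to $\mathfrak D$, because no $H'$-stable linear slice exists there. With that step gone, your derivation of $H=C_m$ collapses; in particular you never rule out that the generic isotropy group of the open orbit is a non-cyclic finite subgroup of $SL(2)$ (binary dihedral, tetrahedral, octahedral, icosahedral), which is a real part of Popov's theorem, not a formality.

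Second, the actual content of the classification is deferred rather than proved. That the right $\C^*$-action extends from $\mathcal O$ to all of $X$ (so that $\C[X]^U$ is genuinely bigraded) is a nontrivial theorem --- it is one of the main points of \cite{BH} --- and the ``direct analysis of which subcones occur,'' which is where the pair $(l,m)$ and the range $l\in\mathbb Q\cap(0,1]$ actually come from (this is precisely the semigroup $M^+_{l,m}$ of \S\ref{s-pop}), is asserted in one sentence. What your write-up establishes rigorously is only the preliminary orbit analysis (unique closed orbit, no $1$-dimensional orbits, existence of a boundary divisor). A correct route, closer to Popov's, is to work from the start with the inclusion $\C[X]\subset\C[SL(2)/H]$ and analyze $\C[X]^U$ as a graded subalgebra of $\C[SL(2)]^{U\times H}$, deducing the cyclicity of $H$ and the two invariants $(l,m)$ from that algebra, rather than from local geometry along a boundary orbit that is not closed.
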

We denote the corresponding variety by $E_{l,m}$. The numbers $l$ 
and $m$ are called the \emph{height} and the \emph{degree} of $E_{l,m}$,
 respectively. 
Write $l=p/q$, where $g.c.d.(q, p)=1$. 
\begin{theorem}[\cite{G}, see also {\cite[Corollary 2.7]{BH}}]\label{toric}
An affine normal quasihomogeneous $SL(2)$-variety $E_{l,m}$ is toric 
if and only if $q-p$ divides $m$. 
\end{theorem}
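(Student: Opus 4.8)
The plan is to make the coordinate ring $\C[\E]$ completely explicit — using either Popov's realization of it as the $SL(2)$-stable subalgebra of $\C[SL(2)/\Gamma_m]=\C[SL(2)]^{\Gamma_m}$ cut out by the combinatorial datum attached to $(l,m)$, or the Batyrev--Haddad presentation $\E\cong H_{q-p}\git(\C^*\times\mu_m)$ — and to translate ``$\E$ is toric'' into a purely arithmetic condition on that datum. Recall that $\E$ is $(SL(2)\times\C^*)$-spherical, so $\C[\E]$ is multiplicity-free as an $SL(2)\times\C^*$-module and its highest weights form a finitely generated, pointed monoid $\Lambda_{l,m}\subset\mathbb Z^2$ (one coordinate recording the $SL(2)$-highest weight, the other the $\C^*$-weight); since $\E$ is normal, $\Lambda_{l,m}$ is saturated, i.e.\ it is the set of lattice points of a $2$-dimensional rational cone whose two extremal rays are governed, respectively, by $m$ and by $l=p/q$. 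The proof then amounts to determining exactly when $\Lambda_{l,m}$, together with the $SL(2)$-weight direction, assembles into a full-rank-$3$ affine semigroup, equivalently into the coordinate ring of a toric threefold.

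For the implication ``$q-p\mid m\Rightarrow\E$ toric'' I would argue constructively. Starting from the Batyrev--Haddad presentation, write down the $\C^*$- and $\mu_m$-weights on $\C[H_{q-p}]$ explicitly; the defining equation of $H_{q-p}$ involves $q-p$ as an exponent, so when $q-p$ divides $m$ the subgroup $\mu_{q-p}\subseteq\mu_m$ is already ``seen'' by the $\C^*$-action, and the residual part can be absorbed into an extra one-parameter group $\C^*$ acting on $\E$ and centralizing the maximal torus $T\subset SL(2)$. Adjoining $T$ yields a two-torus on $\E$, and a direct multiplicity count, using the explicit $\Lambda_{l,m}$, shows that $\C[\E]$ becomes multiplicity-free for a suitable three-torus; exhibiting the corresponding rank-$3$ cone finishes this direction.

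For the converse I would argue by contradiction, ruling out any three-torus $\mathbb T\subset\Aut(\E)$ with a dense orbit when $q-p\nmid m$. The key reduction is that the big torus of a normal affine toric variety is a maximal torus of its automorphism group, so up to conjugation $\mathbb T\supseteq T$; writing $\mathbb T=T\cdot\mathbb T_1$ reduces matters to classifying the two-tori $\mathbb T_1$ that centralize $T$, act on $\E$, and together with $SL(2)$ have a dense orbit. Note $\mathbb T$ cannot centralize all of $SL(2)$: a torus with one-dimensional weight spaces centralizing $SL(2)$ would force $SL(2)$ to act through its (trivial) character group, hence trivially. Carrying out the classification of the admissible $\mathbb T_1$ using the description of $\C[\E]$ above, one finds that such a $\mathbb T_1$ exists precisely when $\Lambda_{l,m}$ is a lattice cone, and that this forces $q-p\mid m$ — equivalently, the unique singular point of $\E$ (present exactly for $l<1$) has the local structure of a toric singularity only under that divisibility hypothesis. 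The degenerate case $l=1$, where $q-p=0$, is checked separately: $\E$ is then smooth but still not toric, consistent with the convention $0\nmid m$.

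I expect the converse to be the main obstacle: one must control \emph{every} possible torus action on $\E$, not just the evident ones, so that no exotic toric structure is overlooked. The reduction $\mathbb T\supseteq T$ and the non-centralization observation trim this considerably, but the surviving bookkeeping — pinning down $\Lambda_{l,m}$ exactly, computing its saturation and the sublattice it spans, and converting ``$\Lambda_{l,m}$ is a lattice cone'' into the statement $q-p\mid m$ — is the technical core, and is most cleanly handled by working throughout with the two-dimensional monoid $\Lambda_{l,m}$ rather than with the three-dimensional ring $\C[\E]$ directly.
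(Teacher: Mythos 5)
The paper does not prove this statement: it is imported from Ga{\u\i}fullin \cite{G} (see also \cite[Corollary 2.7]{BH}), so there is no internal argument to measure yours against, and your attempt has to stand on its own. In the forward direction your overall plan (make $M^+_{l,m}$ explicit and exhibit a three-torus) is the right one, but the mechanism you describe does not produce the torus: a residual finite quotient of $\mu_m$ cannot be ``absorbed into an extra one-parameter group,'' and the connected centralizer of $SL(2)$ on $\E$ is only the one-dimensional $\C^*$ of \cite{BH}, so no three-torus can be assembled from $T$ and centralizing data alone. The extra $\C^*$ one actually needs is the anti-diagonal torus of $\P$, which commutes with $T$ but not with $SL(2)$, and it only becomes visible after the computation recorded in Remark \ref{toric case}: when $m=a(q-p)$ the semigroup $M^+_{l,m}$ is generated by $(m,0),(m+1,1),\dots,(aq,ap)$, so $\E$ is the affine cone over the $(aq,ap)$-embedding of $\P$, which is manifestly toric. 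That semigroup computation, not a group-theoretic absorption, is the substance of this implication, and your sketch does not carry it out.

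The genuine gap is in the converse. Everything rests on the reduction ``up to conjugation $\mathbb T\supseteq T$,'' which you justify by asserting that the big torus of a normal affine toric variety is a maximal torus of its automorphism group and, implicitly, that $T$ can then be conjugated into it. For an affine variety $\Aut(\E)$ is not an algebraic group, and maximality and conjugacy statements for tori in such infinite-dimensional groups are deep results that you cannot invoke for free; without them you have not excluded precisely the ``exotic'' toric structure, with torus meeting the image of $SL(2)$ trivially, that you say you want to rule out. The published proofs sidestep this entirely by reversing the direction of the argument: Ga{\u\i}fullin classifies the quasihomogeneous $SL(2)$-actions on affine toric threefolds (via the root/fan combinatorics of the toric side) and matches the result against Popov's list, while Batyrev--Haddad read off toricity from their explicit quotient presentation. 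Your parenthetical disposal of $l=1$ (``smooth but still not toric'') is likewise an assertion rather than an argument and needs its own proof. As written, the converse is not established.
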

We use the following notation for some closed subgroups of $SL(2)$:
\begin{align*}
&T:=\left\{
\begin{pmatrix}
t & 0 \\
0 & t^{-1}
\end{pmatrix}
: \;
t \in \C^*
\right\};\quad  
B:=\left\{
\begin{pmatrix}
t & u \\
0 & t^{-1}
\end{pmatrix}
: \; 
t \in \C^*,\; u \in \C
\right\}; \\
&U_n:=\left\{
\begin{pmatrix}
\zeta & u \\
0 & \zeta^{-1}
\end{pmatrix}
: \; 
\zeta^n=1,\; u \in \C
\right\};\quad 
C_n:=\left\{
\begin{pmatrix}
\zeta & 0 \\
0 & \zeta^{-1}
\end{pmatrix}
: \; 
\zeta^n=1
\right\}.
\end{align*}
An $SL(2)$-variety $E_{l,m}$ is smooth if and only if $l=1$. 
If $l <1$, then $E_{l,m}$ contains three $SL(2)$-orbits: 
the open orbit $\mathfrak U$, a 2-dimensional orbit $\mathfrak D$, and the closed orbit 
$\{O\}$. The fixed point $O$ is a unique $SL(2)$-invariant singular 
point.   
Let
\[
k:=g.c.d.(m, q-p),\quad a:=\frac{m}{k}, \quad  b:=\frac{q-p}{k}.
\]
Then we have 
\[
\mathfrak U \cong SL(2)/C_m,\quad \mathfrak D \cong SL(2)/U_{a(q+p)}.
\]
An explicit construction of the variety $E_{l,m}$ reduces to determine a system 
of generators of the following semigroup (\cite{K}, \cite{Pa}):
\[
M^+_{l,m}:=\left\{ (i,j) \in \mathbb Z^2_{\geq 0}\;  :\;  j \leq li,\; m|(i-j)\right\}.
\]
Let $(i_1, j_1),\; \dots,\; (i_u, j_u)$ be a system of generators of 
$M^+_{l,m}$, and consider a vector
\[
v=(X^{i_1}Y^{j_1},\; \dots,\; X^{i_u}Y^{j_u}) \in V(i_1+j_1) \oplus \dots 
\oplus V(i_u+j_u)=V,
\]
where $V(n):=\Sym^n\langle X, Y\rangle $ is the irreducible $SL(2)$-representation of highest weight $n$. Then, $E_{l,m}$ is isomorphic to the closure $\overline{SL(2) \cdot v} \subset V$.  
\begin{figure}[h]
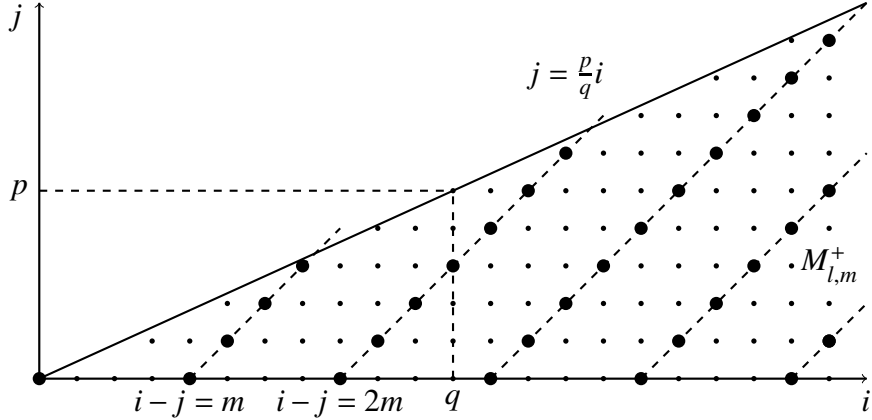

\begin{center}
\tikz{
(-1,-1) grid (11,5);
\draw[thick](0,0)--(0,5);
\draw[thick](0,0)--(11,0);
\draw[thick](0,0)--(11,5);
\draw node at(7,4){$j=\frac{p}{q}i$};
\draw node at(10.5,1.5){$M^+_{l,m}$};
\draw node at(11,-0.3){$i$};
\draw node at(-0.3,4.8){$j$};
\coordinate (J) at (0,5);
\coordinate (I) at (11,0);
\coordinate (O) at (0,0);
\draw[->] (O) -- (J);
\draw[->] (O) -- (I);
{\draw[dashed][thick](2,0)--(4,2)};
{\draw[dashed][thick](4,0)--(7.5,3.5)};
{\draw[dashed][thick](6,0)--(11,5)};
{\draw[dashed][thick](8,0)--(11,3)};
{\draw[dashed][thick](10,0)--(11,1)};
{\draw[dashed][thick](5.5,0)--(5.5,2.5)};
{\draw[dashed][thick](0,2.5)--(5.5,2.5)};
\coordinate (P) at (5.5,0);
 \path (P) node [below] {$q$};
\coordinate (Q) at (0,2.5);
 \path (Q) node [left] {$p$};
\coordinate (A) at (2,0);
       {\fill (A) circle (2.5pt)};
       \path (A) node [below] {$i-j=m$};
\coordinate (B) at (4,0);
       {\fill (B) circle (2.5pt)};
       \path (B) node [below] {$i-j=2m$};
\coordinate (C) at (6,0);
       {\fill (C) circle (2.5pt)};
\coordinate (D) at (8,0);
       {\fill (D) circle (2.5pt)};
\coordinate (E) at (10,0);
       {\fill (E) circle (2.5pt)};
{\fill (0,0) circle (2.5pt)};
{\fill (0.5,0) circle (1pt)};
{\fill (1,0) circle (1pt)};       
{\fill (1.5,0) circle (1pt)};
{\fill (1.5,0.5) circle (1pt)};
{\fill (2,0.5) circle (1pt)};
{\fill (2.5,0) circle (1pt)};
{\fill (2.5,1) circle (1pt)};
{\fill (3,0) circle (1pt)};
{\fill (3,0.5) circle (1pt)};
{\fill (3.5,0) circle (1pt)};
{\fill (3.5,0.5) circle (1pt)};
{\fill (3.5,1) circle (1pt)};
{\fill (3.5,1.5) circle (2.5pt)};
{\fill (4,0.5) circle (1pt)};
{\fill (4,1) circle (1pt)};
{\fill (4,1.5) circle (1pt)};
{\fill (4.5,0) circle (1pt)};
{\fill (4.5,1) circle (1pt)};
{\fill (4.5,1.5) circle (1pt)};
{\fill (4.5,2) circle (1pt)};
{\fill (5,0) circle (1pt)};
{\fill (5,0.5) circle (1pt)};
{\fill (5,1) circle (1pt)};
{\fill (5,1.5) circle (1pt)};
{\fill (5,2) circle (1pt)};
{\fill (5.5,0) circle (1pt)};
{\fill (5.5,0.5) circle (1pt)};
{\fill (5.5,1) circle (1pt)};
{\fill (5.5,1.5) circle (1pt)};
{\fill (5.5,2) circle (1pt)};
{\fill (5.5,2.5) circle (1pt)};
{\fill (6,0.5) circle (1pt)};
{\fill (6,1) circle (1pt)};
{\fill (6,1.5) circle (1pt)};
{\fill (6,2) circle (1pt)};
{\fill (6,2.5) circle (1pt)};
{\fill (6.5,0) circle (1pt)};
{\fill (6.5,0.5) circle (1pt)};
{\fill (6.5,1) circle (1pt)};
{\fill (6.5,1.5) circle (1pt)};
{\fill (6.5,2) circle (1pt)};
{\fill (6.5,2.5) circle (1pt)};
{\fill (7,0) circle (1pt)};
{\fill (7,0.5) circle (1pt)};
{\fill (7,1) circle (1pt)};
{\fill (7,1.5) circle (1pt)};
{\fill (7,2) circle (1pt)};
{\fill (7,2.5) circle (1pt)};
{\fill (7,3) circle (2.5pt)};
{\fill (7.5,0) circle (1pt)};
{\fill (7.5,0.5) circle (1pt)};
{\fill (7.5,1) circle (1pt)};
{\fill (7.5,1.5) circle (1pt)};
{\fill (7.5,2) circle (1pt)};
{\fill (7.5,2.5) circle (1pt)};
{\fill (7.5,3) circle (1pt)};
{\fill (8,0.5) circle (1pt)};
{\fill (8,1) circle (1pt)};
{\fill (8,1.5) circle (1pt)};
{\fill (8,2) circle (1pt)};
{\fill (8,2.5) circle (1pt)};
{\fill (8,3) circle (1pt)};
{\fill (8,3.5) circle (1pt)};
{\fill (8.5,0) circle (1pt)};
{\fill (8.5,0.5) circle (1pt)};
{\fill (8.5,1) circle (1pt)};
{\fill (8.5,1.5) circle (1pt)};
{\fill (8.5,2) circle (1pt)};
{\fill (8.5,2.5) circle (1pt)};
{\fill (8.5,3) circle (1pt)};
{\fill (8.5,3.5) circle (1pt)};
{\fill (9,0) circle (1pt)};
{\fill (9,0.5) circle (1pt)};
{\fill (9,1) circle (1pt)};
{\fill (9,1.5) circle (1pt)};
{\fill (9,2) circle (1pt)};
{\fill (9,2.5) circle (1pt)};
{\fill (9,3) circle (1pt)};
{\fill (9,3.5) circle (1pt)};
{\fill (9,4) circle (1pt)};
{\fill (9.5,0) circle (1pt)};
{\fill (9.5,0.5) circle (1pt)};
{\fill (9.5,1) circle (1pt)};
{\fill (9.5,1.5) circle (1pt)};
{\fill (9.5,2) circle (1pt)};
{\fill (9.5,2.5) circle (1pt)};
{\fill (9.5,3) circle (1pt)};
{\fill (9.5,3.5) circle (1pt)};
{\fill (9.5,4) circle (1pt)};
{\fill (10,0.5) circle (1pt)};
{\fill (10,1) circle (1pt)};
{\fill (10,1.5) circle (1pt)};
{\fill (10,2) circle (1pt)};
{\fill (10,2.5) circle (1pt)};
{\fill (10,3) circle (1pt)};
{\fill (10,3.5) circle (1pt)};
{\fill (10,4) circle (1pt)};
{\fill (10,4.5) circle (1pt)};
{\fill (10.5,0) circle (1pt)};
{\fill (10.5,0.5) circle (2.5pt)};
{\fill (10.5,1) circle (1pt)};
{\fill (10.5,2) circle (1pt)};
{\fill (10.5,2.5) circle (1pt)};
{\fill (10.5,3) circle (1pt)};
{\fill (10.5,3.5) circle (1pt)};
{\fill (10.5,4) circle (1pt)};
{\fill (10.5,4.5) circle (2.5pt)};
{\fill (2.5,0.5) circle (2.5pt)};
{\fill (3,1) circle (2.5pt)};
{\fill (4.5,0.5) circle (2.5pt)};
{\fill (5,1) circle (2.5pt)};
{\fill (5.5,1.5) circle (2.5pt)};
{\fill (6,2) circle (2.5pt)};
{\fill (6.5,2.5) circle (2.5pt)};
{\fill (6.5,0.5) circle (2.5pt)};
{\fill (7,1) circle (2.5pt)};
{\fill (7.5,1.5) circle (2.5pt)};
{\fill (8,2) circle (2.5pt)};
{\fill (8.5,2.5) circle (2.5pt)};
{\fill (9,3) circle (2.5pt)};
{\fill (9.5,3.5) circle (2.5pt)};
{\fill (10,4) circle (2.5pt)};
{\fill (8.5,0.5) circle (2.5pt)};
{\fill (9,1) circle (2.5pt)};
{\fill (9.5,1.5) circle (2.5pt)};
{\fill (10,2) circle (2.5pt)};
{\fill (10.5,2.5) circle (2.5pt)};
{\fill (10.5,0.5) circle (2.5pt)};
}
\end{center}
\caption{The semigroup $M^+_{l,m}$}
\end{figure}
\begin{remark}\label{toric case}
An algorithm for finding a system of generators of $M^+_{l,m}$ is described in \cite{Pa}. 
Consider for example the case when $m=a(q-p)$, that is to say, when $E_{l,m}$ is a toric variety (see Theorem \ref{toric}). 
By applying the algorithm, we see that $M^+_{l,m}$ is minimally generated by 
$(m,0),\; (m+1, 1),\; \dots,\; (aq, ap)$, and  that 
\[
v=(X^m,\; X^{m+1}Y,\; \dots,\; X^{aq}Y^{ap}) 
\in V(m) \oplus \dots \oplus V(aq+ap) \cong 
V(aq) \otimes V(ap).
\]
Note that  $v$ maps to $X^{aq} \otimes Y^{ap} \in V(aq) \otimes V(ap)$ under the above 
isomorphism. 
Moreover, it is known that if $E_{l,m}$ is toric then $E_{l,m}$ is 
isomorphic to the affine cone over the projective embedding of 
$\P$ into $\mathbb P^{(aq+1)(ap+1)-1}$ 
defined by the global sections of $\mathcal O(aq, ap)$: 
\[
\xymatrix{
\sigma: \P \ar@{^{(}-_>}[r] & \mathbb P^{(aq+1)(ap+1)-1}
}
\]
(see \cite[\S 3]{BH} for more details). 
\end{remark}

According to \cite[\S 1]{BH}, 
an affine normal quasihomogeneous $SL(2)$-variety 
$E_{l,m}$ has a description as a categorical quotient 
of a hypersurface in $\C^5$. 
We consider $\C^5$ as the $SL(2)$-module $V(0) \oplus 
V(1) \oplus V(1)$ with coordinates $X_0,\; X_1,\; X_2,\; X_3,\; X_4$, and 
identify $X_1,\; X_2,\; X_3,\; X_4$ with the coefficients of the $2 \times 2$ matrix
\[
\begin{pmatrix}
X_1 & X_3 \\
X_2 & X_4\\
\end{pmatrix}
\]
so that $SL(2)$ acts by left multiplication. 
Let $(l,m) \in \{\mathbb Q \cap (0,1]\} \times \mathbb N$, and write $l=p/q$ as above. We consider actions of the following diagonalizable groups:
\begin{align*}
& G_0:=\left\{\diag(t,\; t^{-p},\; t^{-p},\; t^q,\; t^q)\; : \; t \in \C^*\right\} \cong 
\C^*;\\
& G_m:=\left\{\diag(1,\; \zeta^{-1},\; \zeta^{-1},\; \zeta,\; \zeta)\;  :\; \zeta^m=1\right\} \cong 
\mu_m.
\end{align*} 
It is easy to see that the $SL(2)$-action on $\C^5$ commutes with the $\G$-action. 
\begin{theorem}[{\cite[Theorem 1.6]{BH}}]\label{batyrev}
Let $E_{l,m}$ be a $3$-dimensional affine normal quasihomogeneous $SL(2)$-variety of height 
$l=p/q$ and of degree $m$. Then, $E_{l,m}$ is isomorphic to the categorical quotient 
of the affine hypersurface 
\[
\C^5 \supset H_{q-p}:=(X_0^{q-p}=X_1X_4-X_2X_3)
\]
modulo the action of $\G$. 
\end{theorem}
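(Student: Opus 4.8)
The plan is to prove the isomorphism by comparing coordinate rings. Since $H_{q-p}$ is an affine variety and $G_0\times G_m$ is reductive, the categorical quotient $H_{q-p}\git(G_0\times G_m)$ is $\Spec R$ with $R:=\C[H_{q-p}]^{G_0\times G_m}$ (a finitely generated algebra), and the $SL(2)$-action on $\C^5$ by left multiplication on the matrix $\begin{pmatrix}X_1&X_3\\X_2&X_4\end{pmatrix}$, which as already noted commutes with $G_0\times G_m$, descends to $R$. Hence it is enough to construct an $SL(2)$-equivariant isomorphism $R\cong\C[E_{l,m}]$, and for this I would use the description of $\C[E_{l,m}]$ in terms of the semigroup $M^+_{l,m}$ due to Popov, Kraft and Panyushev (\cite{P,K,Pa}). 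The case $l=1$ is the smooth one and is elementary, so I assume $l<1$, i.e.\ $q>p\geq 1$.

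First I would compute $R$ explicitly. The defining equation $X_0^{q-p}=\Delta$, where $\Delta:=X_1X_4-X_2X_3$ is the ($SL(2)$-invariant) determinant of the matrix, is monic in $X_0$, so $\C[H_{q-p}]$ has $\C$-basis the monomials $X_0^{c_0}X_1^{c_1}X_2^{c_2}X_3^{c_3}X_4^{c_4}$ with $0\le c_0<q-p$; as $G_0\times G_m$ is diagonalizable, $R$ is spanned by the invariant ones. Writing $i:=c_1+c_2$ and $j:=c_3+c_4$, the invariance conditions coming from $G_0$ and $G_m$ are $c_0=pi-qj$ and $m\mid i-j$, and the constraint $0\le c_0<q-p$ then reads $0\le pi-qj<q-p$, which in particular gives $qj\le pi$, that is, $(i,j)\in M^+_{l,m}$. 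For a fixed admissible pair $(i,j)$ the span of the corresponding monomials is $X_0^{\,pi-qj}\cdot\bigl(\Sym^i\langle X_1,X_2\rangle\otimes\Sym^j\langle X_3,X_4\rangle\bigr)$, which as an $SL(2)$-module is $V(i)\otimes V(j)$ since both columns of the matrix carry the standard representation. Therefore
\[
R\;\cong\;\bigoplus_{\substack{(i,j)\in M^+_{l,m}\\ pi-qj<q-p}}V(i)\otimes V(j),
\]
and the multiplication is governed by $X_0^{q-p}=\Delta$: the product of the $(i,j)$- and $(i',j')$-components lies in the $(i+i',j+j')$-component when the latter pair is still admissible, and otherwise, after replacing one factor $X_0^{q-p}$ by $\Delta$ (which has column-bidegree $(1,1)$), in the $(i+i'+1,j+j'+1)$-component. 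As a sanity check, when $E_{l,m}$ is toric (i.e.\ $m=a(q-p)$) the admissible pairs are exactly $(i,j)=(aqk,apk)$ for $k\geq 0$, each with $c_0=0$, so $R\cong\bigoplus_{k\geq 0}V(aqk)\otimes V(apk)$ with the obvious product; this is the section ring of $\mathcal O(aq,ap)$ on $\P$, hence $\C[E_{l,m}]$ by Remark \ref{toric case}.

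To conclude in general I would compare this with $\C[E_{l,m}]$, using the open orbit to fix the comparison. On the $G_0\times G_m$-stable open set $\{\Delta\ne 0\}=\{X_0\ne 0\}\subset H_{q-p}$ the matrix $g$ lies in $GL(2)$ with $\det g=X_0^{q-p}$, and one checks that $(X_0,g)\mapsto g\cdot\diag(X_0^{\,p},X_0^{-q})$ induces an $SL(2)$-equivariant isomorphism $\{\Delta\ne 0\}/(G_0\times G_m)\cong SL(2)/C_m=\mathfrak U$, the dense open orbit of $E_{l,m}$; indeed this map is visibly $G_0$-invariant, lands in $SL(2)$, intertwines the $G_m$-action with right translation by $C_m$, is $SL(2)$-equivariant, and is bijective on the (free) quotient. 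Since $H_{q-p}$ is an irreducible normal hypersurface — its singular locus is the single point $O$, of codimension $\geq 2$, so normality follows from Serre's criterion, whence $R$ is normal as the ring of invariants of a normal ring under a reductive group — both $R$ and $\C[E_{l,m}]$ embed into $\C[\mathfrak U]$, and it remains to see that these two subalgebras coincide. One inclusion comes from comparing generators; for the other I would invoke normality of $R$ together with the equality of the $SL(2)$-isotypic Hilbert functions read off from the decomposition above.

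The step I expect to be the main obstacle is precisely this last matching in the non-toric case: identifying the set $\{(i,j)\in M^+_{l,m}:pi-qj<q-p\}$, together with the $(1,1)$-shifted multiplication rule, with Popov's generators of $\C[E_{l,m}]$ — in effect, recognizing $R$ as the expected Rees-type algebra over the toric cone of Remark \ref{toric case}. In the toric case this is transparent; in general it requires a careful but elementary analysis of the semigroup $M^+_{l,m}$ in the spirit of Panyushev's algorithm (\cite{Pa}), which is exactly the combinatorial content that Popov's classification encodes.
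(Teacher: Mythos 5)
A preliminary remark: the paper itself does not prove this statement --- it is imported verbatim from Batyrev--Haddad as \cite[Theorem 1.6]{BH} --- so there is no internal proof to compare yours against, and I can only judge your argument on its own terms. Much of what you do is correct and genuinely useful. Since $X_0^{q-p}-\Delta$ is monic in $X_0$ and homogeneous for the $\G$-grading, the reduced monomials with $0\le c_0<q-p$ do form a basis of weight vectors, and your description of $R=\C[H_{q-p}]^{\G}$ as $\bigoplus V(i)\otimes V(j)$ over the pairs $(i,j)\in M^+_{l,m}$ with $pi-qj<q-p$ is right; so is the identification of $\{X_0\neq 0\}/(\G)$ with $SL(2)/C_m$ (the fibres of $(x_0,g)\mapsto g\cdot\diag(x_0^{p},x_0^{-q})$ are exactly the $G_0$-orbits, and $G_m$ is absorbed into right translation by $C_m$). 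Your toric verification is complete, and it is in fact all that the present paper needs.

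The gap is exactly where you flag it, and it is not a technicality: for general $(l,m)$ you establish neither inclusion between $R$ and $\C[\E]$ inside $\C[\mathfrak U]$, nor the equality of $SL(2)$-isotypic Hilbert functions on which your final step depends. That equality, $\sum_{(i,j)\,\mathrm{adm.}}\bigl[V(n):V(i)\otimes V(j)\bigr]=\dim\Hom^{SL(2)}(V(n),\C[\E])$, together with locating each Popov generator of $\C[\E]$ inside some admissible $V(i)\otimes V(j)$ after Clebsch--Gordan, is the entire content of the theorem; as written, the non-toric case is described rather than proved. A cleaner way to close the gap, which avoids the term-by-term matching altogether, is to note that by your normality argument and orbit computation $\Spec R$ is a $3$-dimensional affine normal quasihomogeneous $SL(2)$-variety with open orbit $SL(2)/C_m$ and (for $l<1$) more than one orbit, hence equals $E_{l',m}$ for a unique height $l'$ by Theorem \ref{popov}; it then only remains to compute $l'$, which can be read off from the $B$-eigenvectors ($U$-invariants) of $R$ --- a short calculation from your monomial basis that recovers the semigroup $M^+_{l,m}$ and gives $l'=p/q=l$. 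This is much closer to how \cite{BH} actually argue.
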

\section{Hilbert function of the general fibers}\label{s-flat}
The contents of this section are valid for every affine $SL(2)$-variety $\E$, i.e., valid for both toric and non-toric cases. 

Let 
\[
\xymatrix{
\pi : H_{q-p} \ar[r] &  H_{q-p}\git (\G) \cong E_{l,m}
}
\] 
be the quotient morphism. 
In this section, we determine the flat locus of $\pi$ and the Hilbert function $h:=h_{H_{q-p}}$ of the general fibers of $\pi$. 

Let $x=(1,1,0,0,1) \in H_{q-p}$.  
Then, the $SL(2) \times \G$-orbit of $x$ coincides with 
the open subset $H_{q-p} \cap \{X_0 \neq 0\}$ of $H_{q-p}$, and the categorical 
quotient of $H_{q-p} \cap \{X_0 \neq 0\}$ by $\G$ is isomorphic to the dense 
open orbit $\mathfrak U$ (see the proof of \cite[Theorem 1.6]{BH}).  
Namely, $\mathfrak U$ is the $SL(2)$-orbit of $\pi(x)$. 
\begin{proposition}\label{flat locus}
Keep the above notation.
\begin{itemize}
\item [{\rm(i)}] The quotient morphism $\pi$ is flat over the open orbit $\mathfrak U$.
\item [{\rm(ii)}] For any $g \in SL(2)$, the $\G$-orbit of $g \cdot x$ is closed and isomorphic to $\G$.
\item[{\rm(iii)}] For any $y \in \mathfrak U$, the 
fiber $\pi^{-1}(y)$ is isomorphic to $\G$. 
\end{itemize}
\end{proposition}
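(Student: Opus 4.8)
The plan is to establish (ii) first, deduce (iii) from it, and obtain (i) separately by a quick equivariance argument. For (ii): since the $SL(2)$- and $\G$-actions on $H_{q-p}$ commute, $\lambda\cdot(g\cdot x)=g\cdot(\lambda\cdot x)$ for $g\in SL(2)$, $\lambda\in\G$, so $\Stab_{\G}(g\cdot x)=\Stab_{\G}(x)$; a direct computation with the diagonal element $\diag(t,t^{-p}\xi^{-1},t^{-p}\xi^{-1},t^{q}\xi,t^{q}\xi)$ (with $t\in\C^{*}$, $\xi^{m}=1$) acting on $x=(1,1,0,0,1)$ forces $t=\xi=1$, so this stabilizer is trivial and the orbit map identifies $\G\cdot(g\cdot x)$ with $\G$. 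For closedness I would write $\G\cdot(g\cdot x)=\bigcup_{\xi\in\mu_{m}}G_{0}\cdot((1,\xi)\cdot g\cdot x)$, a finite union of $G_{0}$-orbits; each such orbit is the image of a morphism $\C^{*}\to\C^{5}$, $t\mapsto(t,\,t^{-p}\xi^{-1}g_{11},\,t^{-p}\xi^{-1}g_{21},\,t^{q}\xi g_{12},\,t^{q}\xi g_{22})$, whose components are Laurent monomials in $t$ with the $X_{0}$-component equal to $t$ and — because $\det g=1$, so the first column of $g$ is nonzero — at least one of the $X_{1},X_{2}$-components a nonzero scalar times $t^{-p}$. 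Such a parametrized curve has no limit point in $\C^{5}$ as $t\to 0$ or $t\to\infty$, hence is closed in $\C^{5}$, and a finite union of closed sets is closed.

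For (iii): given $y\in\mathfrak U$, recall $\mathfrak U=SL(2)\cdot\pi(x)$, so $y=g\cdot\pi(x)$ for some $g\in SL(2)$. The $SL(2)$-action on $H_{q-p}$ descends to $\E=H_{q-p}\git\G$ and $\pi$ is $SL(2)$-equivariant, so $\pi^{-1}(y)=g\cdot\pi^{-1}(\pi(x))$; it therefore suffices to prove $\pi^{-1}(\pi(x))=\G\cdot x$. The inclusion $\supseteq$ is clear. For $\subseteq$, let $z\in\pi^{-1}(\pi(x))$. Since $\pi$ is the categorical quotient by the reductive group $\G=\C^{*}\times\mu_{m}$ and $\pi(z)=\pi(x)$, the orbit closures $\overline{\G\cdot z}$ and $\overline{\G\cdot x}$ meet; but $\G\cdot x$ is closed by (ii), so $\G\cdot x\subseteq\overline{\G\cdot z}$. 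If $z\notin\G\cdot x$, then $\G\cdot x$ is disjoint from $\G\cdot z$, hence $\G\cdot x\subseteq\overline{\G\cdot z}\setminus\G\cdot z$; but the latter is a union of boundaries of $G_{0}$-orbits, hence contained in the $G_{0}$-fixed locus of $H_{q-p}$, which (as $G_{0}$ acts with nonzero weights on every coordinate of $\C^{5}$) is at most the single point $O$ — impossible, since $\G\cdot x$ is $1$-dimensional and $O\notin\G\cdot x$ (we have $X_{0}(x)=1\neq 0$). Hence $z\in\G\cdot x$, and $\pi^{-1}(y)=g\cdot\G\cdot x=\G\cdot(g\cdot x)\cong\G$ by (ii).

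For (i): let $F\subseteq H_{q-p}$ be the non-flat locus of $\pi$; it is closed, and it is $SL(2)\times\G$-stable because the group acts on $H_{q-p}$ compatibly with $\pi$ ($\G$ acting trivially on $\E$). By generic flatness $\pi$ is flat over some dense open $Y_{0}\subseteq\E$, so $\pi(F)$ is a constructible subset disjoint from $Y_{0}$, hence not dense in $\E$; being $SL(2)$-stable and $\mathfrak U$ being the dense open orbit, $\pi(F)$ must then be disjoint from $\mathfrak U$. Thus $F\cap\pi^{-1}(\mathfrak U)=\emptyset$, i.e.\ $\pi$ is flat over $\mathfrak U$. (Alternatively, once (iii) is known one may invoke miracle flatness: $H_{q-p}$ is Cohen--Macaulay as a hypersurface in $\C^{5}$, $\E$ is smooth along $\mathfrak U$ since its only singular point $O$ lies off $\mathfrak U$, and every fibre over $\mathfrak U$ is $1$-dimensional $=\dim H_{q-p}-\dim\E$.) I expect the main obstacle to be the inclusion $\pi^{-1}(\pi(x))\subseteq\G\cdot x$ in (iii): showing the fibre is \emph{exactly} the orbit, not merely that it contains it, is where the reductivity of $\G$ and the structure of $\C^{*}$-orbit closures must both be used. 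The closedness assertion in (ii) is the other delicate point, but it is handled by the monomial-curve observation above.
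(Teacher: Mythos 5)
Your argument is correct, and it reaches all three items, but the route through (ii)--(iii) differs from the paper's in an instructive way. For the closedness of $(\G)\cdot(g\cdot x)$ the paper reduces to $g=e$ by $SL(2)$-equivariance and then checks by direct computation that the orbit of $x$ is the zero set of the explicit ideal $I_1=(X_0^{q-p}-X_1X_4,\,X_2,\,X_3,\,1-X_0^{mp}X_1^m)$; you instead split the orbit into finitely many $G_0$-orbits and invoke the standard criterion that a $\C^*$-orbit of a vector with nonzero components in both positive and negative weight spaces is closed (here weight $1$ on $X_0$ and weight $-p$ on the first column of $g$, which is nonzero since $\det g=1$). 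Your criterion is more conceptual and handles arbitrary $g$ uniformly, but the paper's computation is not wasted effort: exhibiting $I_1$ as the ideal cutting out the fiber is reused immediately afterwards (it seeds the family $I_s$ of \S 4, Lemma \ref{boundarybasis}, and Corollary \ref{isom}), so the explicit ideal is the point, not just a device. For (iii) the paper contents itself with a one-line deduction from (ii) plus $\mathfrak U=SL(2)\cdot\pi(x)$; you supply the missing reasoning --- that a fiber of the categorical quotient containing a closed orbit equals that orbit, because the boundary $\overline{(\G)\cdot z}\setminus(\G)\cdot z$ consists of $G_0$-fixed points and the only such point is the origin --- which is exactly the right justification and uses reductivity of $\G$ where it must be used. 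Item (i) is the same generic-flatness-plus-equivariance argument in both treatments, and your miracle-flatness alternative is a valid bonus.
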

\begin{proof}
By the generic flatness theorem, there is a non-empty open subset 
$U$ of $E_{l,m}$ such that $\pi^{-1}(U) \to U$ is flat. 
Since $\mathfrak U \cap U \neq \phi$ and $\pi$ is $SL(2)$-equivariant, it follows that 
$\pi$ is flat over the open orbit $\mathfrak U$.  
To show (ii), 
it suffices to consider the case when $g$ is the 
identity matrix. 
First we see that the $\G$-orbit of $x$ is isomorphic to $\G$, since the stabilizer 
of $x$ is trivial. 
To see that this orbit is closed, consider the ideal 
\[
I_1:=(X_0^{q-p}-X_1X_4,\; X_2,\; X_3,\; 1-X_0^{mp}X_1^{m})
\]
of the polynomial ring $\C[X_0, X_1, X_2, X_3, X_4]$. 
By a simple calculation, we see that the underlying topological space 
of the orbit $(\G) \cdot x$ coincides with the zero set of $I_1$. Therefore, $(\G) \cdot x$ is a closed orbit. Item 
(iii) is a consequence of (ii) and the fact that $\mathfrak U$ is the $SL(2)$-orbit of $\pi(x)$.
\end{proof}
\begin{remark}
We will see in \S 4 that the ideal of the closed orbit  
$(\G) \cdot x=\pi^{-1}(\pi(x)) \subset H_{q-p}$ coincides with $I_1$.
\end{remark} 
\begin{corollary}\label{hilbfunc}
The Hilbert function $h$ of the general fibers of the quotient morphism $\pi$ coincides with that of the regular representation $\C[\G]$: 
\[
h : \Z \to \mathbb Z_{\geq 0},\quad (n,d) \mapsto h(n,d)=1,
\]
where we identify $\Irr(\G)$ with $\Z$. 
\end{corollary}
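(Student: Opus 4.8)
The plan is to deduce Corollary~\ref{hilbfunc} directly from Proposition~\ref{flat locus}. Recall that the Hilbert function $h$ of the general fibers of $\pi$ is, by definition, the Hilbert function of the scheme-theoretic fiber $\pi^{-1}(y)$ over any closed point $y$ in the flat locus of $\pi$. By Proposition~\ref{flat locus}(i) the open orbit $\mathfrak U$ lies in this flat locus, so I may choose $y \in \mathfrak U$. By Proposition~\ref{flat locus}(iii) the fiber $\pi^{-1}(y)$ is isomorphic to $\G$ as a scheme, and moreover this isomorphism can be taken to be $\G$-equivariant, where $\G$ acts on itself by translation: indeed, the fiber is the orbit $(\G)\cdot (g\cdot x)$ for a suitable $g\in SL(2)$, which is free by Proposition~\ref{flat locus}(ii), and as a $\G$-scheme a free orbit is just a copy of the group with its translation action. (One should also note the fiber is reduced, being a single closed free orbit; this is what lets us identify the scheme-theoretic fiber with $\G$ rather than some thickening.)

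Next I would observe that the coordinate ring $\C[\G]$ of the fiber, as a $\G$-module under translation, is the regular representation. Since $\G = G_0\times G_m$ is diagonalizable with character group $\mathbb Z\times\mathbb Z/m\mathbb Z$, the regular representation decomposes as $\C[\G] \cong \bigoplus_{(n,d)\in \mathbb Z\times\mathbb Z/m\mathbb Z}\C_{(n,d)}$, a direct sum of the one-dimensional representations indexed by the characters, each appearing exactly once. Under the identification of $\Irr(\G)$ with $\Z$, this says precisely that $h_{\C[\G]}(n,d) = \dim\Hom^{\G}(\C_{(n,d)},\C[\G]) = 1$ for every $(n,d)$. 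Combining with the previous paragraph, $h(n,d) = h_{\pi^{-1}(y)}(n,d) = h_{\C[\G]}(n,d) = 1$, which is the assertion.

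I expect there to be essentially no serious obstacle here: all the geometric content has already been extracted into Proposition~\ref{flat locus}, and what remains is the standard fact that the regular representation of a diagonalizable group contains every character with multiplicity one. The only point requiring a word of care is the passage from the set-theoretic statement "$\pi^{-1}(y)\cong\G$" to the scheme-theoretic one needed to compute the Hilbert function. This is handled by noting that $\pi^{-1}(\pi(x))$ is cut out (set-theoretically, hence scheme-theoretically once reducedness is checked) by the ideal $I_1$ exhibited in the proof of Proposition~\ref{flat locus}, together with the fact alluded to in the Remark following that proposition that this ideal is in fact the full ideal of the orbit; a short direct verification that $I_1$ is radical, or that $\C[X_0,\dots,X_4]/I_1 \cong \C[\G]$, closes the gap. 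Alternatively one can invoke that a flat fiber over a point of the base whose underlying variety is a free orbit of a reductive (here diagonalizable) group is automatically reduced.
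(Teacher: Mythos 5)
Your argument is correct and is exactly the reasoning the paper intends: the corollary is stated without proof as an immediate consequence of Proposition~\ref{flat locus}, namely that the (scheme-theoretic) fiber over a point of the flat locus $\mathfrak U$ is a free closed $\G$-orbit isomorphic to $\G$, whose coordinate ring is the regular representation of a diagonalizable group and hence contains each character of $\mathbb Z\times\mathbb Z/m\mathbb Z$ exactly once. Your extra care about reducedness of the fiber is consistent with the paper's remark that the ideal of the orbit $(\G)\cdot x$ is $I_1$ (verified later via Theorem~\ref{idealU}), so no discrepancy arises.
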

\begin{remark}
For a $\G$-module $V$, we denote $\Hom^{\G}(M_{(n,d)}, V)$ by $V_{(n,d)}$, where $M_{(n,d)}$ stands for the irreducible representation of weight $(n,d) \in \mathbb Z \times \mathbb Z/m \mathbb Z$. 
\end{remark}
Let us denote by $\mathcal H$ the invariant Hilbert scheme $\H$ associated with the triple $(\G, H_{q-p}, h)$ and consider the Hilbert--Chow morphism 
\[
\xymatrix{
\gamma : \mathcal H \ar[r] & H_{q-p} \git (\G) \cong \E
},
\]
which is an isomorphism over the open orbit $\mathfrak U \subset \E$. 
\section{Calculation of ideals}\label{s-ideal}
For each $s \in \C$, we consider two kinds of ideals
\[
I_{s}:=(X_0^{q-p}-X_1X_4,\; X_2,\; X_3,\; s-X_0^{mp}X_1^{m})
\]
and 
\[
J_{s}:=(X_0^{q-p},\; X_2,\; X_4,\; s-X_1^{aq}X_3^{ap})
\]
of the polynomial ring $A:=\C[X_0,X_1, X_2, X_3, X_4]$. 
We remark that the ideal $I_1$ has already appeared in the proof of Proposition \ref{flat locus}.  
\begin{theorem}\label{idealU}
The quotient ring $A/I_s$ has Hilbert function $h$ for any $s \in \C$, i.e., we have $\dim (A/I_s)_{(n,d)}=h(n,d)$ for any weight $(n,d) \in \Z$. 
\end{theorem}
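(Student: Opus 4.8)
The plan is to compute the isotypical decomposition of $A/I_s$ directly as a $\G$-module and check that each multiplicity $\dim(A/I_s)_{(n,d)}$ equals $h(n,d)=1$. First I would observe that the ideal $I_s$ contains $X_2$ and $X_3$, so $A/I_s$ is a quotient of $\C[X_0,X_1,X_4]$; modulo $X_0^{q-p}-X_1X_4$ we can eliminate (say) $X_4$ on the locus $X_1\neq 0$, but to keep things algebraic it is cleaner to work with the graded ring $\C[X_0,X_1,X_4]/(X_0^{q-p}-X_1X_4)$, which is the coordinate ring of the affine surface $X_0^{q-p}=X_1X_4$, and then quotient further by $s-X_0^{mp}X_1^m$. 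The $\G$-weights are: $X_0$ has weight $(1,0)$ (under $G_0\cong\C^*$ the exponent of $t$, under $G_m\cong\mu_m$ the exponent of $\zeta$), $X_1$ has weight $(-p,-1)$, and $X_4$ has weight $(q,1)$ in the $\Z = \mathbb Z\times\mathbb Z/m\mathbb Z$ grading. A monomial basis of $\C[X_0,X_1,X_4]/(X_0^{q-p}-X_1X_4)$ is given by $\{X_0^{\alpha}X_1^{\beta} : \alpha\geq 0,\ 0\leq\alpha<q-p\ \text{or}\ \beta=0\}\cup\{X_0^{\alpha}X_4^{\gamma}\}$ — more precisely, $\{X_0^\alpha X_1^\beta\}\cup\{X_0^\alpha X_4^\gamma : \gamma\geq 1\}$ modulo the relation, and I would pin down the exact normal form.

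Next I would pass to the quotient by $g:=s-X_0^{mp}X_1^{m}$. Note $X_0^{mp}X_1^m$ has $G_m$-weight $mp\cdot 0 + m\cdot(-1)\equiv 0\pmod m$ and $G_0$-weight $mp-mp=0$, so $g$ is a $\G$-invariant; hence $A/I_s$ inherits the $\Z$-grading and each weight space $(A/I_s)_{(n,d)}$ is a quotient of the corresponding weight space of $\C[X_0,X_1,X_4]/(X_0^{q-p}-X_1X_4)$. The relation $X_0^{mp}X_1^m=s$ lets one trade a factor of $X_0^{mp}X_1^m$ for a scalar, which should reduce every weight space to dimension exactly $1$. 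Concretely, I expect that for each $(n,d)\in\Z$ there is, up to the relations, a unique normal-form monomial (a power of $X_0$ times a power of $X_1$, or a power of $X_0$ times a power of $X_4$, with exponents in a bounded fundamental domain dictated by $q-p$ and by $mp,m$), so $\dim(A/I_s)_{(n,d)}=1$ for all $(n,d)$, and in particular the function is identically $1$, which is exactly $h$ by Corollary \ref{hilbfunc}.

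An alternative, and perhaps cleaner, route is to specialize: for $s\neq 0$ the vanishing locus of $I_s$ is (set-theoretically, and I would check scheme-theoretically) a single $\G$-orbit isomorphic to $\G$, exactly as in the proof of Proposition \ref{flat locus} for $s=1$ — indeed $I_s$ is obtained from $I_1$ by the $G_0$-action rescaling, so $A/I_s\cong A/I_1\cong\C[\G]$ as $\G$-modules, which has Hilbert function $h$. It then remains to handle $s=0$ by a flatness/semicontinuity argument: the family $\{V(I_s)\}_{s\in\C}$ is flat over $\C$ (all the generators have constant $\G$-weight and the relation defining the hypersurface is homogeneous), so all fibers have the same Hilbert function, forcing $\dim(A/I_0)_{(n,d)}=1$ as well. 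The main obstacle is the bookkeeping for $s=0$: there one must verify that killing $X_0^{mp}X_1^m$ (rather than setting it equal to a unit) still collapses each graded piece to dimension one and does not, say, leave a two-dimensional piece somewhere near the "corner'' where the $X_1$-branch and the $X_4$-branch of the surface meet; making the normal-form/monomial-basis argument watertight there is the part that requires genuine care, whereas the $s\neq 0$ case and the flatness of the family are routine.
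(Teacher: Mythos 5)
Your first route is, in outline, exactly the paper's proof: the paper works in $R=\C[X_0,X_1,X_4]$ modulo $\widetilde{I_s}=(X_0^{q-p}-X_1X_4,\; s-X_0^{mp}X_1^m)$ and pins down the ``normal form'' you are looking for by introducing the auxiliary $\Lambda$-grading of Lemma \ref{1dim}, under which every graded piece $R_\lambda$ is one-dimensional; the bookkeeping you defer as ``requiring genuine care'' is precisely the content of Lemma \ref{cont6}(i)--(iii), which shows that all monomials with $c>c_{(n,d)}$ die (for $s=0$) or are identified with ones at $c=c_{(n,d)}$ (for $s=1$), and that the piece at $c=c_{(n,d)}$ collapses to one dimension. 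Note, however, that the paper does \emph{not} prove the lower bound $\dim(A/I_s)_{(n,d)}\geq 1$ combinatorially: it exhibits a point $[I]$ of $\mathcal H$ (lying over $\pi(x)$ for $s=1$, over $O$ for $s=0$) with $I_s\subset I$, so that $A/I_s$ surjects onto $A/I$, whose weight spaces have dimension $h(n,d)$ by definition of the invariant Hilbert scheme (Lemma \ref{boundarybasis}). Your normal-form argument would instead have to verify directly that no weight space becomes zero, which is an extra (doable but nontrivial) step.

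Your alternative route is genuinely different for $s=0$ and could be made to work, but as written it has two concrete defects. First, $I_s$ ($s\neq 0$) is \emph{not} obtained from $I_1$ by the $G_0$-action: $X_0^{mp}X_1^m$ is $\G$-invariant, as you yourself observe, so $G_0$ fixes it; the rescaling comes from the maximal torus $T\subset SL(2)$ (this is Remark \ref{s1}). Moreover, knowing the vanishing locus is a single orbit only identifies $\sqrt{I_s}$; to get $A/I_s\cong\C[\G]$ one should check directly that $X_0$ and $X_1$ become units modulo $I_s$, whence $A/I_s\cong\C[X_0^{\pm1},X_1^{\pm1}]/(X_0^{mp}X_1^m-s)$ is the coordinate ring of a torsor under $\ker\bigl((\C^*)^2\to\C^*,\ (x_0,x_1)\mapsto x_0^{mp}x_1^m\bigr)\cong\G$. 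Second, ``the generators have constant weight and the relation is homogeneous'' is not an argument for flatness. What does work: the total space $\Spec\C[X_0,X_1,X_4]/(X_0^{q-p}-X_1X_4)$ is integral and the map to $\mathbb A^1$ given by $s=X_0^{mp}X_1^m$ is dominant, hence flat; each weight space of the total coordinate ring is then a finitely generated torsion-free, hence free, $\C[s]$-module (finiteness over $\C[s]$ holds because every $\G$-invariant of Remark \ref{invariant ring} reduces, modulo $X_0^{q-p}=X_1X_4$, to $X_0^{mp}X_1^m=s$), so its fiber dimension is constant in $s$. With those repairs your second route gives a shorter proof of the $s=0$ case than the paper's; without them the proposal is a plan rather than a proof.
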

\begin{theorem}\label{idealD}
If $\E$ is toric, then the quotient ring $A/J_s$ 
has Hilbert function $h$ for any $s \in \C$, i.e., we have $\dim (A/J_s)_{(n,d)}=h(n,d)$ for any weight $(n,d) \in \Z$.
\end{theorem}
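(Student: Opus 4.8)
The plan is to compute the $\Z$-graded Hilbert function of $A/J_s$ directly; recall that $h(n,d)\equiv 1$ by Corollary \ref{hilbfunc}, so what must be shown is $\dim(A/J_s)_{(n,d)}=1$ for every weight. First I would split $A/J_s$ as a tensor product: since $X_2,X_4\in J_s$ and the remaining generators $X_0^{q-p}$ and $s-X_1^{aq}X_3^{ap}$ involve disjoint sets of variables, there is an isomorphism
\[
A/J_s \;\cong\; \bigl(\C[X_0]/(X_0^{q-p})\bigr)\otimes_{\C}\bigl(\C[X_1,X_3]/(s-X_1^{aq}X_3^{ap})\bigr).
\]
The ring $A$ is $\Z$-graded with $X_0,\dots,X_4$ homogeneous of weights $(1,0),(-p,-1),(-p,-1),(q,1),(q,1)$; using the toric hypothesis in the form $m=a(q-p)$ (Theorem \ref{toric}), the element $X_1^{aq}X_3^{ap}$ has weight $(0,\overline{-a(q-p)})=(0,0)$, so $J_s$ is $\G$-stable and the displayed isomorphism is one of $\Z$-graded algebras. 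Hence $\dim(A/J_s)_{(n,d)}$ equals the convolution of the Hilbert functions of the two tensor factors, and it suffices to compute each.

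The factor $\C[X_0]/(X_0^{q-p})$ has basis $1,X_0,\dots,X_0^{q-p-1}$ of weights $(0,0),(1,0),\dots,(q-p-1,0)$, so its Hilbert function is the indicator of $L:=\{(j,0):0\le j\le q-p-1\}$. For the factor $R_s:=\C[X_1,X_3]/(s-X_1^{aq}X_3^{ap})$, let $\phi\colon\mathbb Z^2\to\Z$, $\phi(i,j):=(qj-pi,\overline{j-i})$, record the weight of $X_1^iX_3^j$. The monomials $X_1^iX_3^j$ not divisible by $X_1^{aq}X_3^{ap}$ form a $\C$-basis of $R_s$ for every $s$: they span because $X_1^{aq}X_3^{ap}=s$ reduces any monomial, and they are linearly independent because $\C[X_1,X_3]$ is free as a module over its subring $\C[X_1^{aq}X_3^{ap}]$ on precisely these monomials. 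Consequently $\dim(R_s)_w$ equals the number of such monomials of weight $w$, independently of $s$, and I claim this number is $1$ when $w$ lies in
\[
H:=\{\,(n,d)\in\Z \;:\; n\equiv pd \pmod{q-p}\,\}
\]
(which is well defined since $(q-p)\mid pm$) and $0$ otherwise. Indeed $\ker\phi=\mathbb Z\cdot(aq,ap)$ (from $qj=pi$, $\gcd(p,q)=1$, and $m=a(q-p)$), so each fibre $\phi^{-1}(w)$ is a coset of $\mathbb Z\cdot(aq,ap)$; when it meets $\mathbb Z_{\ge0}^2$ its non-negative points form a set $\{(i^*,j^*)+t(aq,ap):t\ge0\}$ with $(i^*,j^*)$ the componentwise-minimal non-negative point of the coset, and minimality forces $i^*<aq$ or $j^*<ap$, whereas every point with $t\ge1$ yields a monomial divisible by $X_1^{aq}X_3^{ap}$; so exactly one admissible monomial of weight $w$ exists if and only if $\phi^{-1}(w)\cap\mathbb Z_{\ge0}^2\neq\emptyset$, and a short arithmetic argument shows the latter is equivalent to $w\in H$ (one direction is $qj-pi\equiv p(j-i)\bmod(q-p)$; for the other, lift $d$ to an integer, solve $qj-pi=n$ over $\mathbb Z$, adjust within the $(q,p)\mathbb Z$-coset of solutions so that $j-i=d$, then add a large multiple of $(aq,ap)$ to land in $\mathbb Z_{\ge0}^2$). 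Thus the Hilbert function of $R_s$ is the indicator of $H$.

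Convolving the two factors, for every $(n,d)\in\Z$,
\[
\dim(A/J_s)_{(n,d)} \;=\; \#\{\,0\le j\le q-p-1 \;:\; (n-j,d)\in H\,\} \;=\; \#\{\,0\le j\le q-p-1 \;:\; j\equiv n-pd \pmod{q-p}\,\} \;=\; 1,
\]
because $\{0,1,\dots,q-p-1\}$ is a complete residue system modulo $q-p$; hence $A/J_s$ has Hilbert function $h$ for every $s\in\C$. I expect the main effort to lie in the analysis of $R_s$: exhibiting a monomial basis uniform in $s$, computing $\ker\phi$ together with the solvability condition for $\phi^{-1}(w)\cap\mathbb Z_{\ge0}^2\neq\emptyset$, and running the minimality argument. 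The toric hypothesis $m=a(q-p)$ enters decisively only in the final display, where it forces the index of $H$ in $\Z$ to equal $q-p$, matching the number of surviving powers of $X_0$; for non-toric $\E$ that index is the proper divisor $\gcd(m,q-p)$ of $q-p$ and the same computation returns the constant value $(q-p)/\gcd(m,q-p)>1$ instead of $1$, which is why this theorem is stated only in the toric case.
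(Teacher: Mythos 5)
Your proof is correct, but it takes a genuinely different route from the paper's. The paper first establishes the lower bound $\dim(A/J_s)_{(n,d)}\geq 1$ geometrically (Lemma \ref{boundarybasis2}: the Hilbert--Chow morphism is proper and an isomorphism over $\mathfrak U$, so there exist points of $\mathcal H$ over $\mathfrak D$ and over $O$ whose ideals contain $J_1$ and $J_0$ respectively), reduces to $s=0,1$ by $SL(2)$-translation (Remark \ref{s1}), and then proves the upper bound $\leq 1$ inside $R=\C[X_0,X_1,X_3]$ using the auxiliary $\Lambda$-grading by the injective map $\mu$ (Lemmas \ref{1dim}, \ref{q-pz}, \ref{min}, \ref{cont6}, \ref{cont8}). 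You instead compute the exact dimension for all $s$ at once: the tensor decomposition $A/J_s\cong\bigl(\C[X_0]/(X_0^{q-p})\bigr)\otimes\C[X_1,X_3]/(s-X_1^{aq}X_3^{ap})$, the uniform monomial basis of the second factor (freeness of $\C[X_1,X_3]$ over $\C[X_1^{aq}X_3^{ap}]$), the computation $\ker\phi=\mathbb Z(aq,ap)$ with the one-reduced-monomial-per-fibre argument, and the final convolution over a complete residue system mod $q-p$ are all correct; I also checked the solvability criterion defining $H$ and the well-definedness of $pd\bmod(q-p)$. Your argument is more elementary and self-contained (it needs neither the geometric lower bound nor the $\Lambda$-grading), and it makes transparent exactly where the toric hypothesis $m=a(q-p)$ enters, as your closing remark about the non-toric value $(q-p)/\gcd(m,q-p)$ shows. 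What the paper's approach buys in exchange is uniformity: the $\Lambda$-grading machinery is shared with the proof of Theorem \ref{idealU} for the ideals $I_s$, where the relation $X_0^{q-p}-X_1X_4$ mixes the variables and no clean tensor decomposition of the kind you use is available.
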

\begin{remark}\label{s1}
If $s \in \C^*$, then we see that $I_s$ and $J_s$ are  $SL(2)$-translates of $I_1$ and $J_1$, respectively. 
\end{remark}
In what follows, we prepare some lemmas needed for the proof of Theorems \ref{idealU} and \ref{idealD}.  
Let $S$ be the coordinate ring of $H_{q-p}$: 
\[
S=\C[H_{q-p}] \cong A/(X_0^{q-p}-X_1X_4+X_2X_3).
\] 
\begin{lemma}\label{generator}
With the above notation, we have the following. 
\begin{itemize}
\item [{\rm(i)}] $S_{(-p, -1)}=S^{\G}X_1 + S^{\G}X_2$.
\item [{\rm(ii)}] $S_{(q,1)}=S^{\G}X_3 + S^{\G}X_4$.
\end{itemize}
\end{lemma}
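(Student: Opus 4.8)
I want to compute the $(\G)$-isotypical components of $S = \C[H_{q-p}]$ in the two relevant weights and exhibit them as rank-$1$ free $S^{\G}$-modules generated by the obvious monomials. Recall $S \cong A/(X_0^{q-p} - X_1X_4 + X_2X_3)$. The group $\G = G_0 \times G_m$ acts with $G_0 \cong \C^*$ scaling $X_0$ by $t$, the pair $X_1,X_2$ by $t^{-p}$, the pair $X_3,X_4$ by $t^{q}$, and $G_m \cong \mu_m$ acting trivially on $X_0$, by $\zeta^{-1}$ on $X_1,X_2$ and by $\zeta$ on $X_3,X_4$. So the weight of a monomial $X_0^{e_0}X_1^{e_1}X_2^{e_2}X_3^{e_3}X_4^{e_4}$ is $(e_0 - p(e_1+e_2) + q(e_3+e_4),\ -(e_1+e_2) + (e_3+e_4) \bmod m)$. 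In particular $X_1$ and $X_2$ both have weight $(-p,-1)$ and $X_3, X_4$ both have weight $(q,1)$, which makes the claimed inclusions $S^{\G}X_1 + S^{\G}X_2 \subseteq S_{(-p,-1)}$ and $S^{\G}X_3 + S^{\G}X_4 \subseteq S_{(q,1)}$ immediate; the content is the reverse inclusion. By symmetry (exchanging the roles of the two columns of the matrix, equivalently swapping $p \leftrightarrow -q$ and negating the $\mu_m$-character) it suffices to prove (i).

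So I reduce to: every monomial basis element of $S_{(-p,-1)}$ is an $S^{\G}$-multiple of $X_1$ or of $X_2$. A monomial $X_0^{e_0}X_1^{e_1}X_2^{e_2}X_3^{e_3}X_4^{e_4}$ in $S$ has weight $(-p,-1)$ iff $e_0 - p(e_1+e_2) + q(e_3+e_4) = -p$ and $(e_1+e_2) - (e_3+e_4) \equiv 1 \pmod m$. From the first equation, using $\gcd(p,q)=1$, one reads off a divisibility forcing $e_0$ to be of a controlled form; the key point is that because the degree in the first factor is exactly $-p$ (the smallest possible strictly negative weight attainable "once"), one of the column-variables must appear at least once. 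Concretely: if $e_1 + e_2 \geq 1$, then dividing the monomial by $X_1$ (if $e_1 \geq 1$) or by $X_2$ (if $e_2 \geq 1$) leaves a monomial of weight $(0,0)$, i.e.\ an element of $S^{\G}$, and we are done. If instead $e_1 = e_2 = 0$, the weight-$(-p,-1)$ condition becomes $e_0 + q(e_3+e_4) = -p$ with $e_0,e_3,e_4 \geq 0$, which is impossible since the left side is $\geq 0$ while $-p < 0$. Hence $e_1 + e_2 \geq 1$ always, and the lemma follows — modulo one subtlety I address next.

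**The subtlety, and the main obstacle.** The argument above is at the level of monomials in the polynomial ring $A$, but $S$ is a quotient by the relation $X_0^{q-p} = X_1X_4 - X_2X_3$, so "monomial basis of $S$" needs care: one must first fix a monomial basis of $S$, e.g.\ by choosing a monomial order and taking the standard monomials modulo the principal ideal $(X_0^{q-p} - X_1X_4 + X_2X_3)$ — say those not divisible by $X_1X_4$ (the leading term). I then need to check that the reduction steps above (dividing by $X_1$ or $X_2$) stay within, or can be rewritten within, this standard-monomial basis, and that the quotients I call "elements of $S^{\G}$" are genuinely invariant — which they are, since having weight $(0,0)$ is exactly the definition of lying in $S^{\G}$, and this is basis-independent. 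The only real work is bookkeeping the relation: when I strip an $X_1$ off a standard monomial $X_0^{e_0}X_2^{e_2}X_3^{e_3}X_4^{e_4}X_1$ (here $e$'s chosen so the product is standard, so $e_4 = 0$ in that case), the result $X_0^{e_0}X_2^{e_2}X_3^{e_3}$ is automatically standard, so no rewriting is needed; the case $e_1 = 0, e_2 \geq 1$ is identical with $X_2$; and the case $e_1 = e_2 = 0$ was shown impossible. Thus the main obstacle is purely the choice of basis and confirming the division operation respects it — once that is pinned down, the weight computation closes the argument immediately. This lemma is then exactly what is needed to constrain the components $(A/I_s)_{(-p,-1)}$, $(A/I_s)_{(q,1)}$ in the proofs of Theorems \ref{idealU} and \ref{idealD}.
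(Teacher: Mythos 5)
Your proof of item (i) is correct and is essentially the paper's argument: if a weight-$(-p,-1)$ monomial involves no $X_1$ or $X_2$, its $G_0$-weight $e_0+q(e_3+e_4)$ is nonnegative and cannot equal $-p$. (The care you take with a standard monomial basis of $S$ is harmless but unnecessary: the paper simply proves $A_{(-p,-1)}=A^{\G}X_1+A^{\G}X_2$ in the polynomial ring and pushes this forward along the graded surjection $A\to S$.)

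The genuine gap is your reduction of (ii) to (i) ``by symmetry.'' No such symmetry exists, because $X_0$ has $G_0$-weight $+1$: in case (i) every variable other than $X_1,X_2$ has positive $G_0$-weight while the target weight $-p$ is negative, which is exactly what kills the bad case; in case (ii) the target weight $q$ is positive and the remaining variables $X_0,X_1,X_2$ have $G_0$-weights of both signs. Concretely, $X_0^{\,q+p(m-1)}X_1^{\,m-1}$ has weight $(q,1)$, involves neither $X_3$ nor $X_4$, and is standard for your chosen leading term $X_1X_4$; so the case $e_3=e_4=0$ is not vacuous and your argument does not place this element in $S^{\G}X_3+S^{\G}X_4$. (Indeed $A_{(q,1)}\neq A^{\G}X_3+A^{\G}X_4$ already at the level of the polynomial ring, so (ii), unlike (i), cannot be proved upstairs in $A$.) The repair is to use the hypersurface relation: any weight-$(q,1)$ monomial $f=X_0^{d_0}X_1^{d_1}X_2^{d_2}$ has $d_0=q+p(d_1+d_2)\geq q>q-p$, so in $S$ one may write $f=(f/X_0^{q-p})(X_1X_4-X_2X_3)$, and both $(f/X_0^{q-p})X_1$ and $(f/X_0^{q-p})X_2$ have weight $(0,0)$, hence lie in $S^{\G}$. (Equivalently, take $X_0^{q-p}$ rather than $X_1X_4$ as the leading term of the relation, so that standard monomials satisfy $d_0<q-p$ and the bad case becomes vacuous for both items.)
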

\begin{proof}
Since 
$X_1, X_2 \in S_{(-p, -1)}$, it is clear that 
$S_{(-p, -1)} \supset S^{\G} X_1 + S^{\G} X_2$. 
To see the other inclusion, take an arbitrary $f=X_0^{d_0}X_1^{d_1}X_2^{d_2}X_3^{d_3}X_4^{d_4} \in A_{(-p, -1)}$. 
If either $d_1 > 0$ or $d_2 > 0$ holds, then we clearly have $f \in A^{\G}X_1+A^{\G}X_2$. 
Otherwise, $f$ is of the form $f=X_0^{d_0}X_3^{d_3}X_4^{d_4}$. 
But this contradicts to $f \in A_{(-p,-1)}$, since the $G_0$-weights of $X_0,\; X_3$, and $X_4$ are all 
positive. This shows (i). 
Item (ii) follows in a similar way.  
\end{proof}
\begin{remark}\label{invariant ring}
Let $i \in \{1,2\}$ and $j \in \{3,4\}$. Then the invariant ring $\C[X_0, X_{i}, X_{j}]^{\G}$ is given as follows (see the proof of \cite[Theorem 1.6]{BH}): 
\[
\C[X_0^{pu_1-qu_2}X_{i}^{u_1}X_{j}^{u_2}\; :\; (u_1, u_2) \in M^+_{l,m}].
\]
If $\E$ is toric, then we see by Remark \ref{toric case} that this coincides with 
\[
\C[
X_0^{(ap-u)(q-p)}X_{i}^{m+u}X_{j}^u\; :\; 0 \leq u \leq ap]. 
\] 
\end{remark}
Let $(n,d) \in \mathbb Z \times \mathbb Z/ m \mathbb Z$.  
As we have seen in \S \ref{s-hilb}, there is a finite-dimensional $SL(2)$-module 
$F_{n,d}$ that generates the weight space $S_{(n,d)}$ over the invariant ring $S^{\G}$. 
By Lemma \ref{generator}, we can take $F_{-p, -1}
=\langle X_1, X_2\rangle$ and 
$F_{q, 1}
=\langle X_3, X_4\rangle$. 
Also, it follows that for any closed point $[I] \in \mathcal H$, 
we have 
\begin{equation}
s_1X_1+s_2X_2 \in I \label{s1s2}
\end{equation}
and 
\begin{equation}
s_3X_3+s_4X_4 \in I \label{s3s4}
\end{equation}
for some $(s_1, s_2) \neq 0$ and $(s_3, s_4 ) \neq 0$, respectively,  since $h(-p,-1)=h(q,1)=1$. 
\begin{lemma}\label{boundarybasis}
We have 
$\dim (A/I_1)_{(n,d)} \geq h(n,d)$ and $\dim (A/I_0)_{(n,d)} \geq h(n,d)$ for any 
$(n,d) \in \mathbb Z \times \mathbb Z/m \mathbb Z$. 
\end{lemma}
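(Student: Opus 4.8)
The plan is to use that $h\equiv 1$, so the assertion reduces to showing $(A/I_1)_{(n,d)}\neq 0$ and $(A/I_0)_{(n,d)}\neq 0$ for every weight $(n,d)\in\Z$. Since $I_1$ and $I_0$ both contain $X_2$, $X_3$ and $X_0^{q-p}-X_1X_4$, the first step is to eliminate $X_2,X_3$ and write $A/I_s\cong B/(g-s)$ for $s\in\{0,1\}$, where $B:=\C[X_0,X_1,X_4]/(X_0^{q-p}-X_1X_4)$ and $g:=X_0^{mp}X_1^m$. I record three facts about $B$: it is an integral domain (the polynomial $X_1X_4-X_0^{q-p}$ is linear in $X_1$ with coprime coefficients, hence irreducible); $g$ has $\G$-weight $(0,0)$; and every weight space $B_{(n,d)}$ is nonzero, since for $b\ge 0$ with $b\equiv -d\pmod m$ and $b$ large the monomial $X_0^{n+bp}X_1^b$ lies in $B_{(n,d)}$ and is nonzero in the domain $B$.

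For $s=1$: in $B/(g-1)$ both $X_0$ and $X_1$ are units (with inverses $X_0^{mp-1}X_1^m$ and $X_0^{mp}X_1^{m-1}$), whence $X_4=X_1^{-1}X_0^{q-p}$ is redundant and $A/I_1\cong\C[X_0^{\pm 1},X_1^{\pm 1}]/(X_0^{mp}X_1^m-1)$, the group algebra of $\Lambda:=\mathbb Z^{2}/\mathbb Z(mp,m)$. Assigning to the class of $(a,b)$ the $\G$-weight of $X_0^aX_1^b$ gives an isomorphism $\Lambda\xrightarrow{\ \sim\ }\mathbb Z\times\mathbb Z/m\mathbb Z$ (the map $\mathbb Z^{2}\to\mathbb Z\times\mathbb Z/m\mathbb Z$ is surjective with kernel exactly $\mathbb Z(mp,m)$), under which the $\G$-weight decomposition of $A/I_1$ becomes the tautological grading of the group algebra $\C[\mathbb Z\times\mathbb Z/m\mathbb Z]$, all of whose homogeneous components are one-dimensional. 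Hence $\dim(A/I_1)_{(n,d)}=1$ for every $(n,d)$; in particular equality holds in the first assertion, which will also feed into Theorem \ref{idealU}.

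For $s=0$: equip $B$ with the grading in which $\deg X_0=2$ and $\deg X_1=\deg X_4=q-p$. The defining relation is then homogeneous of degree $2(q-p)$, all degrees are nonnegative, and this grading refines the $\G$-weight decomposition; so each $B_{(n,d)}$ is a graded subspace, nonzero and bounded below in degree. Since $g$ is homogeneous of positive degree $m(p+q)$ and of $\G$-weight $(0,0)$, multiplication by $g$ carries $B_{(n,d)}$ into itself while strictly raising degree, hence is not surjective; therefore $(A/I_0)_{(n,d)}=B_{(n,d)}/g\,B_{(n,d)}\neq 0$, and the proof is complete.

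I do not anticipate a genuine obstacle: the case $s=1$ is a short lattice computation and the case $s=0$ a one-line degree argument once the auxiliary grading on $B$ is introduced, the only point needing attention being that this grading is compatible with the $\G$-action, which is clear on monomials. As an alternative treatment of the case $s=0$, which additionally gives Theorem \ref{idealU}, one can observe that the generic $\G$-orbit in $\Spec B$ is dense in a fibre of the map $g\colon\Spec B\to\mathbb A^1$, so that $B^{\G}=\C[g]$; each module of covariants $B_{(n,d)}$ is then finite over $B^{\G}$ and torsion-free over $\C[g]$, hence free, so $\dim(A/I_s)_{(n,d)}=\rank_{\C[g]}B_{(n,d)}$ is independent of $s$ and equals $1$ by the case $s=1$.
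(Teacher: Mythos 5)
Your proof is correct, but it takes a genuinely different route from the paper's. The paper argues indirectly through the moduli interpretation: it picks points $[I]\in\gamma^{-1}(\pi(x))$ and $[I']\in\gamma^{-1}(O)$ of the invariant Hilbert scheme (whose existence comes from Theorem \ref{commutative} and the surjectivity of $\gamma$ onto $\E$), shows via the relations \eqref{s1s2} and \eqref{s3s4} that $I_1\subset I$ and $I_0\subset I'$, and reads off the inequality from the surjections $A/I_1\twoheadrightarrow A/I$ and $A/I_0\twoheadrightarrow A/I'$, since $A/I$ and $A/I'$ have Hilbert function $h$ by definition of $\mathcal H$. You instead compute directly that every weight space of $A/I_s$ is nonzero: for $s=1$ by identifying $A/I_1$ with the group algebra of $\mathbb Z^2/\mathbb Z(mp,m)\cong\mathbb Z\times\mathbb Z/m\mathbb Z$, and for $s=0$ by introducing a nonnegative grading on $B=\C[X_0,X_1,X_4]/(X_0^{q-p}-X_1X_4)$ in which $g=X_0^{mp}X_1^m$ is homogeneous of positive degree $m(p+q)$, so that multiplication by $g$ cannot be surjective on a nonzero, degree-bounded-below weight space. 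All the steps check (in particular $mp\geq 1$ and $m\geq 1$ justify the claimed inverses of $X_0,X_1$ modulo $I_1$, and the bigrading of $B$ by degree and $\G$-weight is legitimate because the defining relation is homogeneous for both). What your approach buys: it is self-contained and elementary, it does not invoke the existence of Hilbert-scheme points over $\pi(x)$ and over $O$, and it yields the \emph{equality} $\dim(A/I_1)_{(n,d)}=1$ outright, essentially recovering Corollary \ref{isom} (that $I_1$ is the ideal of the closed orbit $\G\cdot x$) and, via your torsion-freeness remark on $B_{(n,d)}$ over $B^{\G}=\C[g]$, even Theorem \ref{idealU}. What the paper's approach buys: it is shorter given the machinery already in place and transfers verbatim to $J_0,J_1$ in Lemma \ref{boundarybasis2}, whereas your explicit computations would have to be redone for the ring $\C[X_0,X_1,X_3]/(X_0^{q-p})$, which is not a domain.
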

\begin{proof}
We have seen in \S \ref{s-flat} that the open orbit $\mathfrak U \subset \E$ coincides with the $SL(2)$-orbit of $\pi(x)$, where $x=(1,1,0,0,1) \in H_{q-p}$. 
Let $[I] \in \gamma^{-1}(\mathfrak U)$ be a point such that $\gamma([I])=\pi(x)$. 
Since $X_0^{mp}X_1^m$, $X_0^{mp}X_2^m \in A^{\G}$, we heve $1-X_0^{mp}X_1^m$, $X_0^{mp}X_2^m \in I$. 
Therefore, taking \eqref{s1s2} into account, we get $X_2 \in I$. 
Similarly, since we have $X_1^{aq}X_3^{ap}$, $X_1^{aq}X_4^{ap} \in A^{\G}$, it follows that   $X_1^{aq}X_3^{ap}$,  $1-X_1^{aq}X_4^{ap} \in I$, and this implies that $X_3 \in I$ concerning  \eqref{s3s4}.  
Therefore, we have $I_1 \subset I$,  and hence the natural surjection $A/I_1 \to A/I$. 
It follows that $\dim (A/I_1)_{(n,d)} \geq \dim (A/I)_{(n,d)}=h(n,d)$. 
Next, let $[I'] \in \gamma^{-1}(O)$ be a point such that 
$\gamma([I']) \in H_{q-p} \cap \{X_2=X_3=0\} \git (\G)$.  
We see in a similar way that $I_0 \subset I'$ holds, and therefore $\dim (A/I_0)_{(n,d)} \geq h(n,d)$. 
\end{proof}
\begin{lemma}\label{boundarybasis2}
We have 
$\dim (A/J_1)_{(n,d)} \geq h(n,d)$ and $\dim (A/J_0)_{(n,d)} \geq h(n,d)$ for any 
$(n,d) \in \mathbb Z \times \mathbb Z/m \mathbb Z$. 
\end{lemma}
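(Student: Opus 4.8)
The plan is to argue exactly as in the proof of Lemma~\ref{boundarybasis}: produce points $[J],[J']\in\mathcal H$ with $J_1\subseteq J$ and $J_0\subseteq J'$ as ideals of $A$ (note that, since a point of $\mathcal H$ is a closed $\G$-subscheme of $H_{q-p}$, every such ideal contains $X_0^{q-p}-X_1X_4+X_2X_3$), and then deduce from the surjections $A/J_1\twoheadrightarrow A/J$ and $A/J_0\twoheadrightarrow A/J'$ that $\dim(A/J_1)_{(n,d)}\ge\dim(A/J)_{(n,d)}=h(n,d)$ and $\dim(A/J_0)_{(n,d)}\ge\dim(A/J')_{(n,d)}=h(n,d)$, using that $A/J$ and $A/J'$ have Hilbert function $h$. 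The basic device, as in Lemma~\ref{boundarybasis}, is that for $[I]\in\mathcal H$ with $\gamma([I])=P$ the space $(A/I)^{\G}=(A/I)_{(0,0)}$ is one-dimensional, hence equals $\C$, so every $f\in A^{\G}$ satisfies $f-f(P)\in I$, the scalar $f(P)$ being the value at $P$ of the function induced by $f$ on $\E$, which can be read off by evaluating $f$ at any point of $\pi^{-1}(P)$. The invariants to feed into this are $X_1^{aq}X_3^{ap}$, $X_1^{aq}X_4^{ap}$ and $X_0^{mp}X_1^m$, all of which lie in $A^{\G}$ by Remark~\ref{invariant ring}.

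For $J_1$, I would take $y=(0,1,0,1,0)\in H_{q-p}$ and, using that $\gamma$ is surjective, any $[J]\in\mathcal H$ with $\gamma([J])=\pi(y)$. Since $X_1^{aq}X_3^{ap}(y)=1$ while $X_1^{aq}X_4^{ap}(y)=X_0^{mp}X_1^m(y)=0$, this gives $1-X_1^{aq}X_3^{ap}\in J$ and $X_1^{aq}X_4^{ap},\,X_0^{mp}X_1^m\in J$; the first relation makes $\bar X_1$ and $\bar X_3$ units in $A/J$, and the other two then force $\bar X_4$ and $\bar X_0$ to be nilpotent. Because $s_3\bar X_3+s_4\bar X_4=0$ by \eqref{s3s4} with $(s_3,s_4)\neq0$, and a unit is never nilpotent, we must have $s_3=0$, hence $X_4\in J$; subtracting $X_1X_4$ from the equation of $H_{q-p}$ then yields $X_0^{q-p}+X_2X_3\in J$. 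If $s_1\neq0$ in \eqref{s1s2} then $\bar X_2$ would be a unit (as $\bar X_1$ is), so $\overline{X_0^{q-p}}=-\bar X_2\bar X_3$ would be a unit, contradicting the nilpotency of $\bar X_0$; therefore $s_1=0$, so $X_2\in J$ and then $X_0^{q-p}\in J$ as well. Thus $J_1=(X_0^{q-p},X_2,X_4,1-X_1^{aq}X_3^{ap})\subseteq J$.

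For $J_0$ the point must lie over $O$, but there $\gamma$ is no longer injective, so it has to be chosen with some care — this is the one genuine obstacle, and (in contrast with the $I_0$ case of Lemma~\ref{boundarybasis}) the right choice is a Borel-fixed point. Indeed, $\E$ has $\{O\}$ as its unique closed orbit and $\{O\}$ is a point, so Theorem~\ref{Borel}(i) applies and the $SL(2)$-stable closed nonempty set $\gamma^{-1}(O)$ contains a $B$-fixed point $[J']$. Evaluating $X_1^{aq}X_3^{ap}$ at the origin $(0,0,0,0,0)\in\pi^{-1}(O)$ gives $X_1^{aq}X_3^{ap}\in J'$. Since $J'$ is $B$-stable and $\C X_2$ is the unique $B$-stable line of the copy of $V(1)$ spanned by $X_1,X_2$, and likewise $\C X_4\subset\langle X_3,X_4\rangle$ (a direct check with the action of $B$; this is also consistent with Proposition~\ref{candidate}, which identifies $J_0$ as the Borel-fixed point of $\mathcal H$), the $B$-submodule generated by the element in \eqref{s1s2} necessarily contains $X_2$ and that generated by the element in \eqref{s3s4} necessarily contains $X_4$, so $X_2,X_4\in J'$; then $X_0^{q-p}\in J'$ by the equation of $H_{q-p}$. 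Hence $J_0=(X_0^{q-p},X_2,X_4,X_1^{aq}X_3^{ap})\subseteq J'$, and the proof is complete. Apart from selecting this Borel-fixed point over $O$, every step above is a routine manipulation in $A/J$ and $A/J'$, so the main work is in isolating the right point of $\gamma^{-1}(O)$.
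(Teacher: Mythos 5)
Your proof is correct, and its first half is exactly the paper's argument for $J_1$: choose $[J]$ with $\gamma([J])=\pi(x')$, $x'=(0,1,0,1,0)$, feed the invariants $1-X_1^{aq}X_3^{ap}$, $X_1^{aq}X_4^{ap}$, $X_0^{mp}X_1^m$ into \eqref{s1s2} and \eqref{s3s4}, and conclude $J_1\subset J$; your unit/nilpotent bookkeeping is a faithful expansion of the paper's ``as in the proof of Lemma \ref{boundarybasis}.'' Where you genuinely diverge is the $J_0$ half. The paper stays parallel to Lemma \ref{boundarybasis}: it selects $[J']\in\gamma^{-1}(O)$ by requiring it to lie over the locus $H_{q-p}\cap\{X_2=X_4=0\}$ (the intent being that $X_2,X_4\in J'$ by the choice of $[J']$), and invokes no Borel-fixed-point machinery here. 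You instead obtain $[J']$ as a $B$-fixed point of the nonempty, closed, $SL(2)$-stable set $\gamma^{-1}(O)$ via Theorem \ref{Borel}(i), and extract $X_2,X_4\in J'$ from $B$-stability of $J'$ --- which is precisely the argument the paper defers to Proposition \ref{candidate}. Your route buys rigor: the paper's condition ``$\gamma([J'])\in H_{q-p}\cap\{X_2=X_4=0\}\git(\G)$'' is, as literally written, satisfied by every point of $\gamma^{-1}(O)$, so the existence of a point over $O$ whose ideal actually contains $X_2$ and $X_4$ is exactly what needs justification, and Theorem \ref{Borel}(i) supplies it cleanly and without circularity (neither that theorem nor the identification of $\C X_2\subset\langle X_1,X_2\rangle$ and $\C X_4\subset\langle X_3,X_4\rangle$ as the $B$-stable lines depends on the present lemma or on Theorem \ref{idealD}). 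What the paper's version buys is uniformity with the $I_0$ case of Lemma \ref{boundarybasis}, where --- as you correctly note --- a Borel-fixed point would not do, since $X_3$ does not span the $B$-stable line of $\langle X_3,X_4\rangle$ and one must instead take a point over $O$ supported in $\{X_2=X_3=0\}$ (e.g.\ a limit of the $[I_s]$).
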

\begin{proof}
We can easily see that the $2$-dimensional orbit $\mathfrak D \subset \E$ coincides with the  $SL(2)$-orbit of $\pi(x')$, where $x'=(0,1,0,1,0) \in H_{q-p}$.  
Let $[J] \in \gamma^{-1}(\mathfrak D)$ be a point such that $\gamma([J])=\pi(x')$. 
Then, as in the proof of Lemma \ref{boundarybasis}, we can show that $J_1 \subset J$, and therefore $\dim (A/J_1)_{(n,d)} \geq \dim (A/J)_{(n,d)}
=h(n,d)$. 
Next, let $[J']\in \gamma^{-1}(O)$ be a point such that 
$\gamma([J'])\in H_{q-p} \cap \{X_2=X_4=0\} \git (\G)$. 
We see by following the same line that $J_0 \subset J'$, and thus $\dim (A/J_0)_{(n,d)} \geq h(n,d)$. 
\end{proof}
Let $j \in \{3,4\}$, and set $R:=\C[X_0, X_1, X_j]$. 
For each $c,n \in \mathbb Z$,  consider the vector subspaces 
\[
R^c:=\langle X_0^{d_0}X_1^{d_1}X_{j}^{d_{j}} \in R\; :\; d_1-d_{j}=c\rangle
\]
and
\[
R_n:=\langle X_0^{d_0}X_1^{d_1}X_{j}^{d_{j}} \in R\; :\; d_0-pd_1+qd_{j}=n\rangle
\] 
of $R$. 
Then we see that the polynomial ring $R$ decomposes as follows: 
\begin{equation*}
R=\bigoplus_{c \in \mathbb Z} R^c=\bigoplus_{n \in \mathbb Z} R_n.
\end{equation*}
Moreover, let $R^c_n:=R^c \cap R_n$. Then, for any weight $(n,d) \in \mathbb Z \times \mathbb Z/m \mathbb Z$, we have
\begin{equation*}
R_{(n,d)}=\bigoplus_{c \equiv d\; (mod\; m)} R^c_n.\label{new}
\end{equation*}
\begin{lemma}\label{minc}
For any $(n,d) \in \mathbb Z \times \mathbb Z/m \mathbb Z$, the minimum
\[
c_{(n,d)}:=\min\{c \in \mathbb Z\; :\; c \equiv d\; (mod\; m),\; R^c_n \neq 0\}
\]
exists.
\end{lemma}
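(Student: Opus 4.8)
The plan is to show that the set $\Sigma := \{c \in \mathbb{Z} \;:\; c \equiv d\;(mod\;m),\; R^c_n \neq 0\}$, whose minimum is the object of the lemma, is a \emph{non-empty} subset of $\mathbb{Z}$ that is \emph{bounded below}; any such subset of $\mathbb{Z}$ has a least element, so that will suffice.

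For non-emptiness I would simply exhibit one monomial. Choosing the representative $d \in \{0, 1, \dots, m-1\}$ of the residue class, setting $d_j = 0$, and taking $c = d + km$ with $k \in \mathbb{Z}_{\geq 0}$ large enough that $n + pc \geq 0$, the exponents $d_0 := n + pc \geq 0$, $d_1 := c \geq 0$, $d_j = 0$ satisfy $d_0 - p d_1 + q d_j = n$ and $d_1 - d_j = c$. Hence $X_0^{d_0} X_1^{d_1} \in R^c_n$ with $c \equiv d\;(mod\;m)$, so $c \in \Sigma$ and $\Sigma \neq \emptyset$.

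For the lower bound I would invoke Popov's normalization $l = p/q \in \mathbb{Q} \cap (0,1]$, which forces $0 < p \leq q$. Let $X_0^{d_0} X_1^{d_1} X_j^{d_j}$ be any monomial in $R^c_n$. From $d_0 \geq 0$ and $d_0 = n + p d_1 - q d_j$ we get $p d_1 - q d_j \geq -n$; since $(q-p) d_j \geq 0$, this gives $p(d_1 - d_j) \geq p d_1 - q d_j \geq -n$, i.e. $c = d_1 - d_j \geq -n/p$. Thus every element of $\Sigma$ is at least $-n/p$, and the minimum $c_{(n,d)}$ exists.

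There is no real obstacle here: the lemma is a finiteness statement whose only content is that the exponent $d_j$ of $X_j$ cannot run off to $+\infty$ while $c$ runs off to $-\infty$, and the constraint $d_0 \geq 0$ together with $p \leq q$ is exactly what prevents this. The one point needing a moment's care is to take the congruence representative of $d$ non-negatively when writing down the witnessing monomial, and to keep the inequality $p \leq q$ in view — it is essential, since for $p > q$ one could in general make $c$ arbitrarily negative while keeping $d_0 \geq 0$.
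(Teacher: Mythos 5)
Your proof is correct and follows essentially the same approach as the paper: both arguments derive a lower bound on $c$ from the non-negativity of the exponents (the paper writes $n=d_0+(q-p)d_1-qc$ and concludes $c\geq -n/q$, while you use $d_0\geq 0$ and $d_j\geq 0$ to get $c\geq -n/p$; either bound suffices). You additionally verify non-emptiness of the set with an explicit monomial, a point the paper leaves implicit.
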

\begin{proof}
Take an arbitrary $0 \neq X_0^{d_0}X_1^{d_1}X_{j}^{d_{j}}  \in R^c_n$. Then we have 
$n=d_0-pd_1+qd_{j}=d_0+(q-p)d_1-qc$, and hence $c \geq -n/q$. 
\end{proof}
\begin{example}
If $0 \leq n \leq q-p$, then we have $c_{(n,0)}=0$.  
Indeed, suppose that $R^c_n \neq 0$ for some $c<0$. 
Then we can take a non-zero  
$X_0^{d_0}X_1^{d_1}X_{j}^{d_{j}} \in R^{c}_n$, and we have 
$n=d_0+(q-p)d_1-qc \geq q > q-p$. 
By a direct calculation, we see that $R^0_n=\langle X_0^n\rangle$ if 
$0 \leq n <q-p$, and that $R^0_{q-p}=\langle X_0^{q-p},\; X_1X_{j}\rangle$. 
\end{example}
Consider a $\mathbb Z$-linear map $\mu : \mathbb Z^3 \to \mathbb Z^3$ defined by
\[
(d_0, d_1, d_{j}) \mapsto \mu(d_0, d_1, d_{j}):=(d_0-pd_1+qd_{j},\; d_1-d_{j},\; pd_1-qd_{j}).
\]
We see that $\mu$ is injective. Let us denote by $\Lambda$ the image of $\mu|_{\mathbb Z_{\geq 0}^3}$, and define 
\begin{equation*}
R_{\lambda}:=\langle X_0^{d_0}X_1^{d_1}X_{j}^{d_{j}} \in R\; :\; \mu(d_0, d_1, d_{j})=\lambda \rangle
\end{equation*}
for each $\lambda \in \Lambda$. 
Then we have
\begin{equation*}
R=\bigoplus_{\lambda \in \Lambda} R_{\lambda}. 
\end{equation*}
The next lemma follows from the definition of $\mu$ by a direct calculation. 
\begin{lemma}\label{1dim}
The following properties are true. 
\begin{itemize}
\item [\rm(i)] Let $\lambda=(n,c,\w) \in \Lambda$. Then, the vector space $R_{\lambda}$ is spanned by 
\begin{equation*}
f_{\lambda}:=X_0^{n+\w}X_1^{\frac{qc-\w}{q-p}}X_{j}^{\frac{pc -\w}{q-p}}. \label{function}
\end{equation*}
\item [\rm(ii)] For any $\lambda,\; \lambda' \in \Lambda$, we have 
$f_{\lambda} f_{\lambda'}=f_{\lambda+\lambda'}$. 
\end{itemize}
\end{lemma}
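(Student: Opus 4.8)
The plan is to verify Lemma~\ref{1dim} directly from the definition of the map $\mu$, since both assertions are essentially bookkeeping with exponents. First I would treat item (i). Given $\lambda=(n,c,\w)\in\Lambda$, a monomial $X_0^{d_0}X_1^{d_1}X_j^{d_j}$ lies in $R_\lambda$ precisely when $\mu(d_0,d_1,d_j)=(n,c,\w)$, i.e.\ when the three linear equations $d_0-pd_1+qd_j=n$, $d_1-d_j=c$, and $pd_1-qd_j=\w$ all hold. Since $\mu$ is injective (as already noted in the excerpt), the solution $(d_0,d_1,d_j)$ is unique as soon as it exists, so $R_\lambda$ is at most one-dimensional; I would then solve the system explicitly. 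From the second and third equations, $p(d_j+c)-qd_j=\w$, hence $(p-q)d_j=\w-pc$, giving $d_j=\dfrac{pc-\w}{q-p}$ and then $d_1=d_j+c=\dfrac{qc-\w}{q-p}$; substituting into the first equation yields $d_0=n+pd_1-qd_j=n+\w$ (using $pd_1-qd_j=\w$). This reproduces the claimed generator $f_\lambda=X_0^{n+\w}X_1^{(qc-\w)/(q-p)}X_j^{(pc-\w)/(q-p)}$, and since $\lambda\in\Lambda=\mu(\mathbb Z_{\geq 0}^3)$ the three exponents are guaranteed to be non-negative integers, so $f_\lambda$ is a genuine monomial in $R$ and spans $R_\lambda$.

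For item (ii), I would use that $\mu$ is $\mathbb Z$-linear, so for $\lambda,\lambda'\in\Lambda$ with generators $f_\lambda$ (exponent vector $v$, where $\mu(v)=\lambda$) and $f_{\lambda'}$ (exponent vector $v'$, $\mu(v')=\lambda'$), the product $f_\lambda f_{\lambda'}$ is the monomial with exponent vector $v+v'$, and $\mu(v+v')=\mu(v)+\mu(v')=\lambda+\lambda'$. Hence $f_\lambda f_{\lambda'}\in R_{\lambda+\lambda'}$; since $v+v'\in\mathbb Z_{\geq 0}^3$ we also get $\lambda+\lambda'\in\Lambda$, and $f_\lambda f_{\lambda'}$ is the (unique up to scalar, in fact monic) generator $f_{\lambda+\lambda'}$ by item (i). One small thing I would double-check is that the formula for $f_\lambda$ in item (i) is manifestly multiplicative in $(n,c,\w)$ — indeed the exponents $n+\w$, $(qc-\w)/(q-p)$, $(pc-\w)/(q-p)$ are each $\mathbb Z$-linear in $(n,c,\w)$ — which gives an alternative one-line verification of (ii) without referring back to $v,v'$.

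Neither part presents a real obstacle; the only point requiring mild care is making sure the divisions by $q-p$ yield integers, but this is automatic because we restricted to $\lambda\in\Lambda$, i.e.\ $\lambda$ is in the image of the monomial exponent lattice under $\mu$, so the preimage exponents are integral (and non-negative) by construction. I would state the proof as a short direct computation along these lines.
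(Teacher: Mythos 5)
Your proof is correct and matches the paper's intent exactly: the paper simply remarks that the lemma ``follows from the definition of $\mu$ by a direct calculation,'' and your explicit inversion of the linear system (using injectivity of $\mu$ and the fact that $\lambda\in\Lambda$ guarantees non-negative integral exponents) together with the linearity argument for (ii) is precisely that calculation, written out in full.
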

\begin{example}
Let $(u_1, u_2) \in M^+_{l,m}$, and $\lambda=(0,u_1-u_2, pu_1-qu_2)$. Then we have $f_{\lambda}=X_0^{pu_1-qu_2}X_1^{u_1}X_j^{u_2} \in R^{u_1-u_2}_0$.  
\end{example}
\begin{remark}
The polynomial ring $R$ has a natural $\mathbb Z \times \mathbb Z/m \mathbb Z$-grading defined by the $\G$-action, but each graded component $R_{(n,d)}$ with respect to this grading  is infinite-dimensional. 
Lemma \ref{1dim} implies that $R$ admits another grading, namely, the $\Lambda$-grading, such that each graded component $R_{\lambda}$ is one-dimensional. 
We will see below that this makes it easier to analyze the structure of the weight space $R_{(n,d)}$. 
\end{remark}
Next, consider the projection $\tilde{\mu} : \mathbb Z^3 \to \mathbb Z^2,\; (n,c,\w) \mapsto (n,c)$. 
Let $\mu'=\tilde{\mu} \circ \mu$, and 
denote by $\Lambda'$ the image of $\mu'|_{\mathbb Z_{\geq 0}^3}$. Then we have
\begin{equation*}
R=\bigoplus_{(n,c) \in \Lambda'} R^c_n, \quad 
R^c_n=\bigoplus_{\lambda \in \tilde{\mu}^{-1}(n,c)} R_{\lambda}. \label{cn}
\end{equation*}
As an immediate consequence of Lemma \ref{1dim}, we get the following
\begin{lemma}\label{q-pz}
Let $(n,c) \in \Lambda'$. Then we have $\w-\w' \in (q-p)\mathbb Z$ for any $(n,c,\w)$, $(n,c,\w') \in \tilde{\mu}^{-1}(n,c)$. 
\end{lemma}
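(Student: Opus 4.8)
The plan is to observe that membership of a triple $(n,c,\w)$ in $\Lambda$ already forces a congruence relating $\w$ and $c$ modulo $q-p$, so that two such third coordinates sharing the same middle coordinate $c$ automatically differ by a multiple of $q-p$.

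Concretely, I would first unwind the definition of $\mu$. If $\lambda=(n,c,\w)\in\Lambda=\mu(\mathbb Z_{\geq 0}^3)$, then there are integers $d_0,d_1,d_j\geq 0$ with $d_0-pd_1+qd_j=n$, $d_1-d_j=c$, and $pd_1-qd_j=\w$. Substituting $d_1=d_j+c$ into the last equation gives $\w=pc-(q-p)d_j$, hence $\w\equiv pc\pmod{q-p}$. Equivalently, this is just the statement that the exponent $\tfrac{pc-\w}{q-p}$ of $X_j$ in the generator $f_{\lambda}$ of Lemma \ref{1dim} is a (nonnegative) integer, so one may also simply quote Lemma \ref{1dim}.

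Next, given $(n,c,\w),(n,c,\w')\in\tilde{\mu}^{-1}(n,c)$ --- where $\tilde{\mu}^{-1}(n,c)$ is understood inside $\Lambda$, as in the decomposition $R^c_n=\bigoplus_{\lambda\in\tilde{\mu}^{-1}(n,c)}R_{\lambda}$ --- the previous step applies to both triples and yields $\w\equiv pc\equiv\w'\pmod{q-p}$. Therefore $\w-\w'\in(q-p)\mathbb Z$; in fact $\w-\w'=(q-p)(d_j'-d_j)$ for the corresponding exponents $d_j,d_j'$, which completes the argument.

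This is entirely a bookkeeping computation with the linear map $\mu$, so I do not expect a genuine obstacle. The only point deserving a line of care is the interpretation of $\tilde{\mu}^{-1}(n,c)$: it must be read as the fibre of $\tilde{\mu}$ restricted to $\Lambda$, not to all of $\mathbb Z^3$, since over $\mathbb Z^3$ the third coordinate is unconstrained and the statement is false. Once that is in place, the integrality forced by $\lambda\in\Lambda$ supplies the congruence $\w\equiv pc\pmod{q-p}$ and hence the conclusion.
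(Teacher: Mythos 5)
Your proof is correct and matches the paper's reasoning: the paper derives this lemma as an immediate consequence of Lemma \ref{1dim}, whose formula $f_{\lambda}=X_0^{n+\w}X_1^{\frac{qc-\w}{q-p}}X_{j}^{\frac{pc-\w}{q-p}}$ encodes exactly the congruence $\w\equiv pc\pmod{q-p}$ that you compute directly from the definition of $\mu$. Your remark that $\tilde{\mu}^{-1}(n,c)$ must be read as the fibre inside $\Lambda$ is also the correct reading of the paper's notation.
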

Concerning Lemma \ref{1dim}, we also see that the minimum 
\begin{align*}
& \w_{(n,c)}:=\min\{\w \in \mathbb Z\; :\; (n,c,\w) \in \tilde{\mu}^{-1}(n,c)\}
\end{align*}
exists for any $(n,c) \in \Lambda'$. 
\begin{remark}
The maximum $\max\{\w \in \mathbb Z\;:\; (n,c,\w) \in \tilde{\mu}^{-1}(n,c)\}$ also exists, and hence the vector space $R^c_n$ is finite-dimensional.
\end{remark}
\begin{lemma}\label{min}
Let $(n,c,\w) \in \Lambda$. Then, we have $n+\w < q-p$ if and only if $\w=\w_{(n,c)}$. 
\end{lemma}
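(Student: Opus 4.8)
The plan is to make the condition ``$(n,c,\w)\in\Lambda$'' completely explicit by inverting $\mu$, and then to observe that lowering $\w$ by $q-p$ changes exactly one of the resulting inequalities.

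First, since $\mu$ is injective, a triple $(n,c,\w)$ lies in $\Lambda=\mu(\mathbb Z_{\ge 0}^3)$ precisely when its unique $\mu$-preimage has nonnegative entries; solving $\mu(d_0,d_1,d_j)=(n,c,\w)$ gives $d_0=n+\w$, $d_1=\frac{qc-\w}{q-p}$, $d_j=\frac{pc-\w}{q-p}$ (this also re-derives the formula for $f_\lambda$ in Lemma~\ref{1dim}, whose $X_0$-exponent is $d_0=n+\w$). Since $(qc-\w)-(pc-\w)=(q-p)c$, the two fractions are integers simultaneously, so
\[
(n,c,\w)\in\Lambda \iff n+\w\ge 0,\ \ \tfrac{qc-\w}{q-p}\in\mathbb Z_{\ge 0},\ \ \tfrac{pc-\w}{q-p}\in\mathbb Z_{\ge 0}.
\]

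Next I would fix $(n,c,\w)\in\Lambda$ and compare it with the triple $(n,c,\w-(q-p))$. Replacing $\w$ by $\w-(q-p)$ leaves the congruence intact and raises each of $\frac{qc-\w}{q-p}=d_1$ and $\frac{pc-\w}{q-p}=d_j$ by $1$, so both stay nonnegative integers; hence, by the displayed equivalence, $(n,c,\w-(q-p))\in\Lambda$ holds if and only if $n+(\w-(q-p))\ge 0$, i.e.\ if and only if $n+\w\ge q-p$. Equivalently, $n+\w<q-p$ if and only if $(n,c,\w-(q-p))\notin\Lambda$.

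It then remains to identify the condition $(n,c,\w-(q-p))\notin\Lambda$ with $\w=\w_{(n,c)}$. If $\w=\w_{(n,c)}$, then $\w-(q-p)<\w_{(n,c)}$, so $(n,c,\w-(q-p))\notin\Lambda$ by minimality of $\w_{(n,c)}$. Conversely, if $\w\neq\w_{(n,c)}$ then $\w>\w_{(n,c)}$, and Lemma~\ref{q-pz} forces $\w-\w_{(n,c)}$ to be a positive multiple of $q-p$, so $\w-(q-p)\ge\w_{(n,c)}$; then $n+(\w-(q-p))\ge n+\w_{(n,c)}\ge 0$ because $(n,c,\w_{(n,c)})\in\Lambda$, and since we already know $d_1,d_j\ge 0$ for $\w$, the equivalence above shows $(n,c,\w-(q-p))\in\Lambda$, a contradiction. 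Combining this with the previous paragraph proves the lemma. The only point requiring care is the use of Lemma~\ref{q-pz}: it guarantees that the admissible values of $\w$ over a fixed $(n,c)$ form a single arithmetic progression of step $q-p$, which is precisely what lets ``$\w$ is not the minimum'' propagate to ``$\w-(q-p)$ is still admissible''. Beyond this bookkeeping of which of the three inequalities is binding, there is no real obstacle.
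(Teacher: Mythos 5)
Your proof is correct and follows essentially the same route as the paper's: both reduce the question to whether $(n,c,\w-(q-p))$ still lies in $\Lambda$ (decrementing $\w$ by $q-p$ raises $d_1,d_j$ by one, so only $d_0=n+\w-(q-p)\ge 0$ can fail), and both use Lemma~\ref{q-pz} together with $n+\w_{(n,c)}\ge 0$ for the converse. Your explicit inversion of $\mu$ just makes visible the preimage $(n+\w',d_1+1,d_j+1)$ that the paper writes down directly.
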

\begin{proof}
First, notice that the condition $(n,c,\w) \in \Lambda$ implies that $(n,c) \in \Lambda'$ and that $(n,c,\w) \in \tilde{\mu}^{-1}(n,c)$. 
Suppose that $n+\w \geq q-p$, and set $\w '=\w-(q-p)$. 
Let $d_0=n+\w'$, $d_1=\frac{qc-\w}{q-p}+1$, and $d_j=\frac{pc-\w}{q-p}+1$. Then, taking Lemma \ref{1dim} into account,  we see that $(d_0,d_1, d_j) \in \mathbb Z_{\geq 0}^3$. By a direct calculation, we have $\mu(d_0, d_1, d_j)=(n, c, \w')$, and therefore $(n, c, \w') \in \Lambda$. 
It follows that $\w' \geq \w_{(n,c)}$. 
Conversely, suppose that $\w > \w_{(n,c)}$. Then we have $\w-\w_{(n,c)} \geq q-p$ by Lemma \ref{q-pz}. 
Since we have $n+\w_{(n,c)} \geq 0$ by Lemma \ref{1dim}, it follows that $n+\w \geq n+\w_{(n,c)}+q-p \geq q-p$. 
\end{proof}
\begin{definition}
For each $(n,d) \in \mathbb Z \times \mathbb Z/m \mathbb Z$, we define:
\begin{itemize}
\item [\rm(i)] $\Lambda_{(n,d)}:=\{(n,c,\w) \in \Lambda \; :\; c \equiv d\; (mod\; m)\}$;
\item [\rm(ii)] $\lambda_{(n,d)}:=(n,c_{(n,d)}, \w_{(n,c_{(n,d)})}) \in \Lambda_{(n,d)}$. 
\end{itemize}
\end{definition}
Using the notation introduced above, we have different ways of 
expressing the weight space $R_{(n,d)}$: 
\begin{equation*}
R_{(n,d)}=\bigoplus_{\substack{
c \equiv d\; (mod\; m) \\
c \geq c_{(n,d)}}} R^c_n  
 =\bigoplus_{\substack{
c \equiv d\; (mod\; m)\\
c \geq c_{(n,d)}}}\left(\bigoplus_{\lambda \in \tilde{\mu}^{-1}(n,c)} R_{\lambda}\right) 
 =\bigoplus_{\lambda \in \Lambda_{(n,d)}} R_{\lambda}. \label{relation1}
\end{equation*}
\begin{example}
Let $l=p/q=1/3$, and $m=2$. Then by Remark \ref{toric case} the semigroup $M^+_{\frac{1}{3}, 2}$ is minimally generated by $(2,0)$ and $(3,1)$. 
Therefore, in view of Remark \ref{invariant ring}, we have $R_{(0,0)}=R^{\G}=\C[X_0^2X_1^2, X_1^3X_j]$.  We can also calculate the following: 
\begin{align*}
& R^0_0=\C; \\ 
& R^2_0=R_{(0,2,0)} \oplus R_{(0,2,2)},\; f_{(0,2,0)}=X_1^3X_j,\; f_{(0,2,2)}=X_0^2X_1^2; \\  
& R^0_1=R_{(1,0,0)},\;  f_{(1,0,0)}=X_0; \\  
& R^2_1=R_{(1,2,0)} \oplus R_{(1,2,2)},\: f_{(1,2,0)}=X_0X_1^3X_j,\; f_{(1,2,2)}=X_0^3X_1^2; \\  
& R^0_2=R_{(2,0,-2)} \oplus R_{(2,0,0)},\; f_{(2,0,-2)}=X_1X_j,\; f_{(2,0,0)}=X_0^2; \\ 
& R^2_2=R_{(2,2,-2)} \oplus R_{(2,2,0)} \oplus R_{(2,2,2)},\;  f_{(2,2,-2)}=X_1^4X_j^2,\;  f_{(2,2,0)}=X_0^2X_1^3X_j,\;  f_{(2,2,2)}=X_0^4X_1^2. 
\end{align*}
We see that $\lambda_{(0,0)}=(0,0,0)$, $\lambda_{(1,0)}=(1,0,0)$, and $\lambda_{(2,0)}=(2,0,-2)$. 
\end{example}
\begin{lemma}\label{cont6}
Let $\lambda=(n,c,\w),\; \lambda'=(n,c', \w') \in \Lambda_{(n,d)}$. Then we have the following. 
\begin{itemize}
\item [\rm(i)] If $c=c'$, then we have $f_{\lambda}-f_{\lambda'} \in
(X_0^{q-p}-X_1X_{j})$.  
\item [\rm(ii)] If $c >c_{(n,d)}$, then we have $f_{\lambda} \in (X_0^{q-p}-X_1X_{j},\; X_0^{mp}X_1^m)$. 
\item [\rm(iii)] We have $f_{\lambda}-f_{\lambda'} \in (X_0^{q-p}-X_1X_{j},\; 1-X_0^{mp}X_1^m)$. 
\end{itemize}
If $\E$ is toric, i.e., if we have $m=a(q-p)$, then 
the following properties are true. 
\begin{itemize}
\item [\rm(iv)] If $\w=\w'$, then we have $f_{\lambda}-f_{\lambda'} \in
(1-X_1^{aq}X_{j}^{ap})$. 
\item [\rm(v)] If $\w=\w_{(n,c)}$ and $\w'=\w_{(n,c')}$, then $\w=\w'$. In particular, we have $f_{\lambda}-f_{\lambda'} \in (1-X_1^{aq}X_{j}^{ap})$.
\end{itemize}
\end{lemma}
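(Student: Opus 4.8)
The plan is to base everything on the multiplicativity $f_\lambda f_{\lambda'}=f_{\lambda+\lambda'}$ of Lemma \ref{1dim}(ii), applied to two distinguished lattice points. First I would record that $\nu_1:=(0,m,mp)$ and $\nu_2:=(0,m,0)$ both lie in $\Lambda$ — one checks $\mu(mp,m,0)=\nu_1$ and, in the toric case $m=a(q-p)$, $\mu(0,aq,ap)=\nu_2$ — and that by Lemma \ref{1dim}(i) they realise the two monomials $f_{\nu_1}=X_0^{mp}X_1^m$ and $f_{\nu_2}=X_1^{aq}X_j^{ap}$ that occur in the statement. After that, each item is obtained either by factoring a common monomial out of a difference $f_\lambda-f_{\lambda'}$, or by writing $f_\lambda=f_{\lambda-\nu_i}\cdot f_{\nu_i}$ for an appropriate $i$.

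For part (i), since $\lambda$ and $\lambda'$ share their first two coordinates, Lemma \ref{q-pz} gives $\w-\w'=(q-p)t$ for some $t\in\mathbb Z$, and I may assume $t\geq 0$. Reading off the exponents in Lemma \ref{1dim}(i), I would factor out the common monomial $g=X_0^{\,n+\w'}X_1^{(qc-\w)/(q-p)}X_j^{(pc-\w)/(q-p)}$, whose exponents are non-negative because $\lambda,\lambda'\in\Lambda$, so that $f_\lambda-f_{\lambda'}=g\bigl((X_0^{q-p})^t-(X_1X_j)^t\bigr)$, visibly a multiple of $X_0^{q-p}-X_1X_j$. For parts (ii) and (iii) I would first use (i) to replace an arbitrary $\lambda\in\Lambda_{(n,d)}$ by $\lambda_{(n,c)}$ modulo $(X_0^{q-p}-X_1X_j)$. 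When $c>c_{(n,d)}$, the congruence $c\equiv c_{(n,d)}\pmod m$ forces $c-m\geq c_{(n,d)}$, so $\lambda_{(n,c-m)}$ exists and $\lambda_{(n,c-m)}+\nu_1\in\Lambda_{(n,d)}$ has second coordinate $c$; then $f_{\lambda_{(n,c-m)}+\nu_1}=f_{\lambda_{(n,c-m)}}X_0^{mp}X_1^m$ lies in $(X_0^{mp}X_1^m)$, and by (i) it differs from $f_{\lambda_{(n,c)}}$ by a multiple of $X_0^{q-p}-X_1X_j$, which yields (ii). For (iii) the same computation, now read modulo $1-X_0^{mp}X_1^m$ (so that $X_0^{mp}X_1^m\equiv 1$), shows $f_{\lambda_{(n,c)}}\equiv f_{\lambda_{(n,c-m)}}$, and induction on $(c-c_{(n,d)})/m$ finishes it.

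In the toric case, part (iv) follows because $c'-c\in m\mathbb Z=a(q-p)\mathbb Z$, say $c'-c=a(q-p)t$ with $t\geq 0$: the equality $\w=\w'$ makes the $X_0$-exponents of $f_\lambda$ and $f_{\lambda'}$ coincide, while the $X_1$- and $X_j$-exponents jump by $aqt$ and $apt$, so $f_{\lambda'}=f_\lambda\cdot(X_1^{aq}X_j^{ap})^t$ and the difference is a multiple of $1-X_1^{aq}X_j^{ap}$. For part (v) I would argue that $\w_{(n,c)}$ is pinned down by two constraints: from $(pc-\w)/(q-p)\in\mathbb Z$ one gets $\w\equiv pc\pmod{q-p}$, and since $m=a(q-p)$ the hypothesis $c\equiv c'\pmod m$ gives $\w_{(n,c)}\equiv\w_{(n,c')}\pmod{q-p}$; on the other hand Lemma \ref{min} (together with $n+\w_{(n,c)}\geq 0$ from Lemma \ref{1dim}) confines both $\w_{(n,c)}$ and $\w_{(n,c')}$ to the half-open interval $[-n,\,q-p-n)$ of length $q-p$, so they must be equal, after which (iv) applies.

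The routine part is the coordinate bookkeeping for $\mu$; the only points that require a little care are recognising $\nu_1$ and $\nu_2$ as the correct shift vectors and checking in every factorisation that no exponent becomes negative, together with the congruence-plus-bounded-interval argument in (v), which is where the toric hypothesis is genuinely used via $(q-p)\mid m$. I expect parts (ii)–(iii) — organising the reduction to $\lambda_{(n,c)}$ and keeping track of which of the three ideals each congruence lives in — to be the main place where one has to be attentive.
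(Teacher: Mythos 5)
Your proof is correct and follows essentially the same strategy as the paper: part (i) via the same factorization $f_\lambda-f_{\lambda'}=g\bigl((X_0^{q-p})^t-(X_1X_j)^t\bigr)$, parts (ii)--(iv) via shifting by the lattice vectors $(0,m,mp)$ and $(0,m,0)$ realizing $X_0^{mp}X_1^m$ and $X_1^{aq}X_j^{ap}$ (the paper jumps directly from the minimal element $\lambda_{(n,d)}$ where you induct one $m$-step at a time, which amounts to the same thing). The only genuine variation is in (v), where the paper explicitly exhibits $(n,c,\w_{(n,c')})\in\Lambda$ and invokes minimality of $\w_{(n,c)}$, while you use the congruence $\w_{(n,c)}\equiv pc\pmod{q-p}$ together with the length-$(q-p)$ interval $[-n,q-p-n)$ forced by Lemma \ref{min}; both are valid and use the same ingredients.
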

\begin{proof}
By Lemma \ref{1dim}, we have 
$f_{\lambda}=X_0^{n+\w}X_1^{\frac{qc-\w}{q-p}}X_j^{\frac{pc-\w}{q-p}}$ and $f_{\lambda'}=X_0^{n+\w'}X_1^{\frac{qc'-\w'}{q-p}}X_j^{\frac{pc'-\w'}{q-p}}$.   
We can write $c$ and $c'$ as $c=c_{(n,d)}+mx$ and $c'=c_{(n,d)}+mx'$ with some $x,\; x' \in \mathbb Z_{\geq 0}$, respectively.  
Without loss of generality, we may assume that $c \geq c'$.
 
(i) We may assume that $\w \geq \w'$. 
Then, by Lemma \ref{q-pz}, we have $\w-\w'=y(q-p)$ for some $y \geq 0$. 
Therefore, we have
\[
f_{\lambda}-f_{\lambda'}=
X_0^{n+\w'}X_1^{\frac{qc-\w}{q-p}}X_{j}^{\frac{pc-\w}{q-p}}\{(X_0^{q-p})^y-(X_1X_{j})^y\} \in (X_0^{q-p}-X_1X_{j}). 
\]

(ii) We first remark that $f_{(0,m,mp)}=X_0^{mp}X_1^m$. 
Let $\lambda''=\lambda_{(n,d)}+x(0, m, mp)=(n,c,\w_{(n,c_{(n,d)})}+mpx)$. 
Then we have $f_{\lambda}-f_{\lambda''} \in (X_0^{q-p}-X_1X_{j})$ by (i). 
Since we have $f_{\lambda''}=f_{\lambda_{(n,d)}}(X_0^{mp}X_1^m)^x$ by Lemma \ref{1dim}, it follows that $f_{\lambda} \in (X_0^{q-p}-X_1X_{j},\; X_0^{mp}X_1^m)$.

(iii) Taking (i) into account, we may assume that $c > c'$. 
Let $\lambda''$ be as in the proof of (ii), and $\lambda'''=\lambda_{(n,d)}+x'(0, m, mp)=(n,c',\w_{(n,c_{(n,d)})}+mpx')$.  
Then we have 
$f_{\lambda''}-f_{\lambda'''}=f_{\lambda_{(n,d)}}(X_0^{mp}X_1^m)^{x'}\{(X_0^{mp}X_1^m)^{x-x'}-1\} \in (1-X_0^{mp}X_1^m)$. Therefore we get 
\[
f_{\lambda}-f_{\lambda'}=(f_{\lambda}-f_{\lambda''})+(f_{\lambda''}-f_{\lambda'''})+(f_{\lambda'''}-f_{\lambda'}) \in (X_0^{q-p}-X_1X_{j},\; 1-X_0^{mp}X_1^m),
\]
since we have $f_{\lambda}-f_{\lambda''}$, $f_{\lambda'''}-f_{\lambda'} \in (X_0^{q-p}-X_1X_{j})$ by (i). 

(iv) We get  
$f_{\lambda}-f_{\lambda'}=\{(X_1^{aq}X_{j}^{ap})^{x-x'}-1\}f_{\lambda'} \in (1-X_1^{aq}X_{j}^{ap})$ by a direct calculation using $m=a(q-p)$. 

(v) Set $d_0=n+\w_{(n,c')}$, $d_1=\frac{qc'-\w_{(n,c')}}{q-p}$, and $d_j=\frac{pc'-\w_{(n,c')}}{q-p}$. Then we see that  $d_0,\; d_1,\; d_j \in \mathbb Z_{\geq 0}$ and that $\mu(d_0,d_1, d_j)=(n,c', \w_{(n,c')})$.  
By the definition of $\mu$, we have $\mu(d_0, d_1+aq(x-x'), ap(x-x'))=(n,c,\w_{(n,c')})$.  Therefore $(n,c,\w_{(n,c')}) \in \mu'^{-1}(n,c)$, and we have $\w_{(n,c')}=\w_{(n,c)}+y(q-p)$ for some $y \geq 0$ by Lemma \ref{q-pz} and by the minimality of $\w_{(n,c)}$. Further, we have $q-p > n+\w_{(n,c')}=n+\w_{(n,c)}+y(q-p) \geq y(q-p)$ by Lemma \ref{min}, and thus we get $y=0$.  
\end{proof}
\begin{lemma}\label{cont8}
Let $\lambda=(n,c,\w) \in \Lambda_{(n,d)}$. Then the following properties are true. 
\begin{itemize}
\item [\rm(i)] We have $f_{\lambda} \in (X_0^{q-p})$ if and only if $\w > \w_{(n,c)}$. 
\item [\rm(ii)] Suppose that $\E$ is toric, i.e., that we have $m=a(q-p)$.  If $\w=\w_{(n,c)}$ and $c > c_{(n,d)}$, then we have $f_{\lambda} \in (X_1^{aq}X_{j}^{ap})$. 
\end{itemize}
\end{lemma}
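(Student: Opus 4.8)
The plan is to reduce both statements to the explicit monomial formula of Lemma~\ref{1dim}, $f_\lambda=X_0^{n+\w}X_1^{(qc-\w)/(q-p)}X_j^{(pc-\w)/(q-p)}$, together with the description of $\w_{(n,c)}$ supplied by Lemma~\ref{min}. Since each $f_\lambda$ is a nonzero monomial, membership in a monomial ideal is mere divisibility, so for (i) I would simply note that the $X_0$-exponent of $f_\lambda$ is $n+\w$ (the $X_1$- and $X_j$-factors carry no power of $X_0$), whence $f_\lambda\in(X_0^{q-p})$ holds exactly when $n+\w\ge q-p$. As $\lambda=(n,c,\w)\in\Lambda_{(n,d)}\subset\Lambda$ and $(n,c)\in\Lambda'$ (because $R^c_n\ne 0$), minimality of $\w_{(n,c)}$ gives $\w\ge\w_{(n,c)}$, so $n+\w\ge q-p$ is equivalent to $\w\ne\w_{(n,c)}$, that is, to $\w>\w_{(n,c)}$; and the equivalence ``$n+\w\ge q-p$ if and only if $\w\ne\w_{(n,c)}$'' is exactly the contrapositive of Lemma~\ref{min}.

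For (ii) I would assume $m=a(q-p)$, $\w=\w_{(n,c)}$, and $c>c_{(n,d)}$, and write $c=c_{(n,d)}+mx$ with $x\ge 1$ (legitimate since $c\equiv d\equiv c_{(n,d)}\pmod m$). The first step is to match the $\w$-coordinates: applying Lemma~\ref{cont6}(v) to $\lambda$ and $\lambda_{(n,d)}$ shows $\w=\w_{(n,c)}=\w_{(n,c_{(n,d)})}$, so that $\lambda=\lambda_{(n,d)}+(0,mx,0)$ in $\Lambda$. The second step is to verify $(0,mx,0)\in\Lambda$ and compute the corresponding monomial: a direct calculation from the definition of $\mu$ gives $\mu(0,aqx,apx)=(0,mx,0)$ (using $m=a(q-p)$), so by Lemma~\ref{1dim}(i) one has $f_{(0,mx,0)}=X_1^{aqx}X_j^{apx}=(X_1^{aq}X_j^{ap})^x$. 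Then Lemma~\ref{1dim}(ii) gives $f_\lambda=f_{\lambda_{(n,d)}}\,(X_1^{aq}X_j^{ap})^x\in(X_1^{aq}X_j^{ap})$ because $x\ge 1$. Alternatively I could bypass Lemma~\ref{1dim}(ii) and argue by divisibility directly: writing $\w_0=\w_{(n,c_{(n,d)})}$, the $X_1$- and $X_j$-exponents of $f_{\lambda_{(n,d)}}$ are the nonnegative integers $(qc_{(n,d)}-\w_0)/(q-p)$ and $(pc_{(n,d)}-\w_0)/(q-p)$, and replacing $c_{(n,d)}$ by $c=c_{(n,d)}+mx$ raises them by $aqx\ge aq$ and $apx\ge ap$ respectively, so $X_1^{aq}X_j^{ap}\mid f_\lambda$.

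I expect no substantive obstacle here: both parts are bookkeeping with the formulas of Lemma~\ref{1dim}, and (i) is immediate from Lemma~\ref{min}. The one point deserving care is the identification $\w_{(n,c)}=\w_{(n,c_{(n,d)})}$ in (ii) — without it $\lambda$ need not be a lattice translate of $\lambda_{(n,d)}$ — and this is supplied precisely by Lemma~\ref{cont6}(v), so the main task will just be to invoke the earlier lemmas in the right order and to check the trivial identity $\mu(0,aqx,apx)=(0,mx,0)$.
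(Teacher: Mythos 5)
Your proposal is correct and follows essentially the same route as the paper: part (i) is read off from the monomial formula of Lemma~\ref{1dim} together with Lemma~\ref{min}, and part (ii) uses Lemma~\ref{cont6}(v) to identify $\w_{(n,c)}=\w_{(n,c_{(n,d)})}$ and then factors $f_\lambda=f_{\lambda_{(n,d)}}(X_1^{aq}X_j^{ap})^x$. Your write-up merely spells out a few steps (the verification $\mu(0,aqx,apx)=(0,mx,0)$ and the minimality argument turning $\w\ne\w_{(n,c)}$ into $\w>\w_{(n,c)}$) that the paper leaves implicit.
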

\begin{proof}
First, we have  $f_{\lambda}=X_0^{n+\w}X_1^{\frac{qc-\w}{q-p}}X_j^{\frac{pc-\w}{q-p}}$. 
Item (i) is an immediate consequence of Lemma \ref{min}. 
Suppose that $\E$ is toric. 
If $\w=\w_{(n,c)}$, then we have $\w=\w_{(n,c_{(n,d)})}$ by Lemma \ref{cont6} (v). The condition $c >c_{(n,d)}$ implies that we can write $c=c_{(n,d)}+mx$ for some $x>0$, and therefore we have $f_{\lambda}=f_{\lambda_{(n,d)}}(X_1^{aq}X_{j}^{ap})^x \in (X_1^{aq}X_j^{ap})$. 
\end{proof}
\begin{proof}[Proof of Theorem \ref{idealU}]
Taking Remark \ref{s1} into account, it suffices to prove the theorem for $s=0, 1$. Let $j=4$, i.e., $R=\C[X_0, X_1, X_4]$, and set $\widetilde{I_0}=(X_0^{q-p}-X_1X_4,\; X_0^{mp}X_1^{m})$ and $\widetilde{I_1}=(X_0^{q-p}-X_1X_4,\; 1-X_0^{mp}X_1^{m})$. 
In view of Lemma \ref{boundarybasis}, it suffices to show that  
$\dim (R/\widetilde{I_0})_{(n,d)} \leq h(n,d)=1$ and $\dim (R/\widetilde{I_1})_{(n,d)} \leq h(n,d)=1$ 
hold for any weight $(n,d) \in \mathbb Z \times \mathbb Z /m \mathbb Z$. 
We see that the weight space $R_{(n,d)}$ decomposes as 
\[
R_{(n,d)}=R^{c_{(n,d)}}_n \oplus R_{(n,d)}', 
\]
where $R_{(n,d)}'=\bigoplus_{\substack{
c \equiv d\; (mod\; m)\\
c > c_{(n,d)}}} R^c_n$. 
First, we have 
$R_{(n,d)}' \subset \widetilde{I_0}$ by Lemma \ref{cont6} (ii).
Therefore, we get $\dim (R/\widetilde{I_0})_{(n,d)} \leq 1$ by applying Lemma \ref{cont6} (i) with $c=c_{(n,d)}$.  
Similarly, we have $\dim (R/\widetilde{I_1})_{(n,d)} \leq  1$ by Lemma \ref{cont6} (iii).  
\end{proof}
\begin{proof}[Proof of Theorem \ref{idealD}]
As in the proof of Theorem \ref{idealU}, we only consider the cases where $s=0, 1$. 
Let $j=3$, i.e., $R=\C[X_0, X_1, X_3]$, and set $\widetilde{J_0}=(X_0^{q-p},\; X_1^{aq}X_3^{ap})$ and $\widetilde{J_1}=(X_0^{q-p},\; 1-X_1^{aq}X_3^{ap})$. 
Since $R_{\lambda_{(n,d)}}$ is $1$-dimensional, we have $\dim (R/\widetilde{J_0})_{(n,d)} \leq 1$ by Lemma \ref{cont8}, and hence the equality $\dim (R/\widetilde{J_0})_{(n,d)} =1$ concerning Lemma \ref{boundarybasis2}. 
Also, $\dim (R/\widetilde{J_1})_{(n,d)} =1$ follows from Lemmas \ref{boundarybasis2}, \ref{cont6} (v), and \ref{cont8} (i). 
\end{proof}
\begin{corollary}\label{isom}
The $SL(2)$-equivariant isomorphism 
$\gamma|_{\gamma^{-1}(\mathfrak U)} : \gamma^{-1}(\mathfrak U) \to \mathfrak U$ is 
given by sending $[I_1]$ to $\pi(x)$. 
\end{corollary}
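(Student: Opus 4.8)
The statement to prove is Corollary \ref{isom}: the $SL(2)$-equivariant isomorphism $\gamma|_{\gamma^{-1}(\mathfrak U)}\colon \gamma^{-1}(\mathfrak U)\to\mathfrak U$ sends $[I_1]$ to $\pi(x)$, where $x=(1,1,0,0,1)$.

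The plan is to assemble this from the pieces already in place. First I would recall that, by Theorem \ref{idealU} with $s=1$, the quotient $A/I_1$ has Hilbert function $h$; but one must check that $I_1$ actually defines a closed $\G$-subscheme of $H_{q-p}$, i.e. that the defining polynomial $X_0^{q-p}-X_1X_4+X_2X_3$ lies in $I_1$. Since $X_2,X_3\in I_1$ and $X_0^{q-p}-X_1X_4\in I_1$, we have $X_0^{q-p}-X_1X_4+X_2X_3=(X_0^{q-p}-X_1X_4)+X_2\cdot X_3\in I_1$, so $I_1$ descends to an ideal of $S=\C[H_{q-p}]$, and all generators are $\G$-semi-invariant, so $V(I_1)$ is a $\G$-stable closed subscheme of $H_{q-p}$ with coordinate ring of Hilbert function $h$. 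Hence $[I_1]$ is a genuine closed point of $\mathcal H$.

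Next I would identify its image under $\gamma$. By Theorem \ref{commutative}, $\gamma([I_1])=V(I_1)\git\G$; equivalently $\gamma([I_1])$ is the point of $\E=H_{q-p}\git\G$ over which the fiber $V(I_1)$ sits. In Proposition \ref{flat locus} it is shown that the $\G$-orbit $(\G)\cdot x$ has underlying set equal to $V(I_1)$ (this is exactly the "simple calculation" cited there), and that this orbit is closed and isomorphic to $\G$; moreover by Proposition \ref{flat locus}(iii) the fiber $\pi^{-1}(\pi(x))$ equals $(\G)\cdot x$ as a set. So $V(I_1)$ and $\pi^{-1}(\pi(x))$ have the same support, namely the orbit through $x$, which forces $V(I_1)\git\G=\{\pi(x)\}$, i.e. $\gamma([I_1])=\pi(x)$. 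Here I would note for safety that since $\gamma$ restricted to $\gamma^{-1}(\mathfrak U)$ is an isomorphism onto $\mathfrak U$ (Theorem \ref{commutative}, as $\mathfrak U$ lies in the flat locus by Proposition \ref{flat locus}(i)), the closed point $[I_1]$ with $\gamma([I_1])=\pi(x)\in\mathfrak U$ is the unique preimage, so there is no ambiguity; and $SL(2)$-equivariance of $\gamma$ (from \cite[Proposition 3.10]{B}) together with the fact that $\mathfrak U=SL(2)\cdot\pi(x)$ then pins down $\gamma|_{\gamma^{-1}(\mathfrak U)}$ completely on all of $\gamma^{-1}(\mathfrak U)=SL(2)\cdot[I_1]$.

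The only genuine subtlety — and the step I expect to be the main obstacle — is making rigorous the scheme-theoretic (not just set-theoretic) claim that $V(I_1)$ is the fiber $\pi^{-1}(\pi(x))$, rather than merely a subscheme of it with the right Hilbert function. This is handled by a dimension/Hilbert-function count: the fiber $\pi^{-1}(\pi(x))$ has Hilbert function $h$ by the definition of $h$ as the Hilbert function of the general fibers (Corollary \ref{hilbfunc}, together with flatness over $\mathfrak U$), and $A/I_1$ also has Hilbert function $h$ by Theorem \ref{idealU}; since $I_1\subset I_{\pi^{-1}(\pi(x))}$ (because $V(I_1)$ contains the orbit, which is the fiber set-theoretically, and one checks the generators of $I_1$ vanish on the fiber) the surjection $A/I_1\twoheadrightarrow A/I_{\pi^{-1}(\pi(x))}$ is an isomorphism in every graded piece, hence an isomorphism, so $I_1$ is exactly the ideal of the fiber. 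This is essentially the computation promised in the Remark following Proposition \ref{flat locus} ("the ideal of the closed orbit $(\G)\cdot x$ coincides with $I_1$"), and once it is in hand the corollary is immediate.
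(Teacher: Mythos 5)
Your proposal is correct and follows essentially the same route as the paper: the paper's proof likewise combines the set-theoretic identification of $V(I_1)$ with the closed orbit $(G_0\times G_m)\cdot x$ from Proposition \ref{flat locus}~(ii) with the Hilbert-function count of Theorem \ref{idealU} (using that $h$ is the Hilbert function of $\C[G_0\times G_m]$) to conclude that $I_1$ is exactly the ideal of $\pi^{-1}(\pi(x))$. The extra verifications you include (that $I_1$ contains the hypersurface equation, and the equivariance/uniqueness remarks) are correct and merely make explicit what the paper leaves implicit.
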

\begin{proof}
Taking the proof of Proposition \ref{flat locus} (ii) into account, it follows from Theorem \ref{idealU} that the defining ideal of the closed orbit $\pi^{-1}(\pi(x)) \cong \G$ is $I_1$, since $h$ is the Hilbert function of the regular representation $\C[\G]$. 
\end{proof}
\section{Borel-fixed points}\label{s-borel}
Throughout this and next section we assume that an  affine $SL(2)$-variety $\E$ is toric. 
In this section, we consider the action of the Borel subgroup $B \subset SL(2)$ on $\mathcal H$ induced by the $SL(2)$-action on $H_{q-p}$. 
Let us denote by $\mathcal H^B$ the set of $B$-fixed points. 
\begin{proposition}\label{candidate}
We have $\mathcal H^B=\{[J_0]\}$. 
\end{proposition}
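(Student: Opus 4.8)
The plan is to show that a $B$-fixed point $[I]\in\mathcal H$ must be the monomial-type ideal $J_0$, by extracting enough membership relations from $B$-invariance and then comparing with the Hilbert function. First I would recall that $B=TU$, where $U$ is the one-parameter unipotent subgroup; a point $[I]$ is $B$-fixed iff the ideal $I$ is stable under both $T$ and $U$. $T$-stability forces $I$ to be homogeneous for the $T$-weight grading on $A$ (in addition to the $\G$-grading it already respects), so $I$ is spanned by $T\times\G$-weight vectors, i.e.\ by monomials. Thus a $B$-fixed point corresponds to a monomial ideal $I$ with $\dim(A/I)_{(n,d)}=1$ for every $(n,d)\in\Z$. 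Next I would use $U$-stability: writing the action of $u=\left(\begin{smallmatrix}1&*\\0&1\end{smallmatrix}\right)$ on the coordinates $X_1,X_2,X_3,X_4$ (recall $SL(2)$ acts by left multiplication on the matrix $\left(\begin{smallmatrix}X_1&X_3\\X_2&X_4\end{smallmatrix}\right)$, so $X_1\mapsto X_1$, $X_2\mapsto X_2+\text{(param)}X_1$, and similarly $X_3,X_4$), a monomial $m\in I$ forces all monomials obtained by repeatedly lowering $X_2$-degree in favour of $X_1$ (and $X_4$ in favour of $X_3$) to lie in $I$ as well. This is the standard ``Borel-fixed = strongly stable'' mechanism adapted to our group.

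The core step is then a counting/combinatorial argument. By Theorem~\ref{tangent} (or more simply by the $\Grass$-embedding of \S\ref{s-hilb} and Theorem~\ref{Borel}(i)) $\mathcal H$ has at least one $B$-fixed point, so it suffices to prove uniqueness. Using \eqref{s1s2} and \eqref{s3s4} together with $T$-weight homogeneity: the $\G$-weight space $S_{(-p,-1)}$ is spanned by $X_1,X_2$, which are $T$-weight vectors of weights $+1$ and $-1$; since $h(-p,-1)=1$, exactly one of $X_1,X_2$ must lie in $I$, and $U$-stability (lowering $X_2$ toward $X_1$) forces it to be $X_2$, i.e.\ $X_2\in I$. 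Symmetrically, from $S_{(q,1)}=S^{\G}X_3+S^{\G}X_4$ and $U$-stability we get $X_4\in I$. Now work inside $R=\C[X_0,X_1,X_3]=A/(X_0^{q-p}-X_1X_4+X_2X_3,\,X_2,\,X_4)$; here I would invoke the $\Lambda$-grading machinery of \S\ref{s-ideal} (Lemmas~\ref{1dim}, \ref{cont8}, and the decomposition $R_{(n,d)}=\bigoplus_{\lambda\in\Lambda_{(n,d)}}R_\lambda$): each $R_{(n,d)}$ is a direct sum of one-dimensional monomial spaces $R_\lambda$, indexed by $\lambda\in\Lambda_{(n,d)}$, and since $\dim(R/I)_{(n,d)}=1$, the ideal $I$ must contain all of $R_{(n,d)}$ except one line. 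I would show that $U$-stability forces the surviving line to be exactly $R_{\lambda_{(n,d)}}$ for every $(n,d)$ — equivalently, $I\cap R=\bigoplus_{\lambda\in\Lambda\setminus\{\lambda_{(n,d)}\}}R_\lambda$ — by checking that the $U$-action moves any $f_\lambda$ with $\lambda\neq\lambda_{(n,d)}$ (in particular $X_0^{q-p}$, and the $\w>\w_{(n,c)}$ generators) downward toward $f_{\lambda_{(n,d)}}$, so that if such an $f_\lambda\notin I$ then neither is $f_{\lambda_{(n,d)}}$, contradicting $\dim(R/I)_{(n,d)}=1$ unless $f_\lambda\in I$. Combined with $X_0^{q-p}\in I$ (which is $f_\lambda$ for $\w>\w_{(n,c)}$ in degree $(q-p,0)$, cf.\ Lemma~\ref{cont8}(i)) and the monomial $X_1^{aq}X_3^{ap}$ normalization forced by $\dim(A/I)_{(0,0)}=1$, this pins down $I=J_0$.

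The main obstacle, I expect, is the last paragraph's $U$-stability bookkeeping: translating ``$I$ is $U$-stable'' into the precise statement ``$I\cap R$ is the span of all $R_\lambda$ with $\lambda$ not minimal in its weight space'' requires tracking how the unipotent action interacts with the $\Lambda$-grading, which is not itself $U$-stable (only the coarser $\G$-grading is). Concretely, one must verify that applying $u$ to a monomial $f_\lambda$ produces a sum whose lowest term (in a suitable order refining $\w$) is $f_{\lambda_{(n,d)}}$ up to nonzero scalar — or at least that $f_{\lambda_{(n,d)}}\in I$ whenever some other $f_\lambda\notin I$. I would handle this by ordering $\Lambda_{(n,d)}$ by $\w$ (using Lemma~\ref{q-pz} that $\w$-values differ by multiples of $q-p$) and Lemma~\ref{min} (which characterizes $\w_{(n,c)}$), and running a descending induction: if the unique surviving monomial in $R_{(n,d)}$ were $f_\lambda$ with $\w>\w_{(n,c)}$, then $f_\lambda\in(X_0^{q-p})$ forces $f_\lambda\in I$ once $X_0^{q-p}\in I$ is known, a contradiction; the $c>c_{(n,d)}$ case is ruled out by Lemma~\ref{cont8}(ii) together with $X_1^{aq}X_3^{ap}$-considerations. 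Everything else — $T$-homogeneity, the reduction to $R$, existence of at least one $B$-fixed point — is routine given the results already in the excerpt.
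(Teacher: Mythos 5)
Your opening moves match the paper's: from \eqref{s1s2}, \eqref{s3s4} and $B$-stability you get $X_2, X_4\in I$, hence also $X_0^{q-p}=X_1X_4-X_2X_3\in I$ on the hypersurface. But from there the paper finishes in two lines, and the step you flag as ``the main obstacle'' is precisely where your argument has a genuine gap --- and it is a gap you do not need to fill. The missing idea is this: once you know $J_0\subseteq I$, you are done, because Theorem \ref{idealD} says $A/J_0$ already has Hilbert function $h$, so the surjection $A/J_0\twoheadrightarrow A/I$ between two $\G$-modules with the same finite multiplicities in every weight space is an isomorphism, i.e.\ $I=J_0$. All that remains is to put $X_1^{aq}X_3^{ap}$ into $I$; the paper gets this by noting that a $B$-fixed point must map under $\gamma$ to the unique $B$-fixed point $O$ of $\E$, so every non-constant invariant (Remark \ref{invariant ring}) lies in $I$. (Your alternative via $T$-homogeneity and $\dim(A/I)_{(0,0)}=1$ also works.) By contrast, your ``core step'' --- showing weight space by weight space that the surviving line in $R_{(n,d)}$ is $R_{\lambda_{(n,d)}}$ by tracking the unipotent action against the $\Lambda$-grading --- is never actually carried out, and as you yourself observe the $\Lambda$-grading is not $U$-stable, so the descending induction you sketch is not established. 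As written, the proposal therefore does not close.

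Two smaller points. First, your stated transformation rule for the unipotent ($X_1\mapsto X_1$, $X_2\mapsto X_2+uX_1$) is the wrong direction for the upper-triangular $B$ acting by left multiplication on $\bigl(\begin{smallmatrix}X_1&X_3\\X_2&X_4\end{smallmatrix}\bigr)$: with that convention the unique $B$-stable line in $\langle X_1,X_2\rangle$ would be $\C X_1$ and your own ``strongly stable'' mechanism would force $X_1\in I$, contradicting the conclusion $X_2\in I$ that you (correctly) want. The correct computation shows $\C X_2$ and $\C X_4$ are the $B$-stable lines, which is all the paper uses --- no reduction to monomial ideals or strongly-stable combinatorics is needed. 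Second, the appeal to Theorem \ref{Borel}(i) for existence is fine but superfluous: $[J_0]\in\mathcal H$ by Theorem \ref{idealD} and $J_0$ is visibly $B$-stable, which gives the inclusion $\{[J_0]\}\subseteq\mathcal H^B$ directly.
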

\begin{proof}
Take an arbitrary $[J] \in \mathcal H^B$. 
By \eqref{s1s2}, we have $s_1X_1+s_2X_2 \in J$ for some $(s_1, s_2) \neq 0$. Since $J$ is stable under the $B$-action, we see that $X_2 \in J$. 
By the same argument using \eqref{s3s4}, we have $X_4 \in J$,  and hence $(X_0^{q-p}, X_2, X_4) \subset J$.  
Moreover, since $[J]$ is fixed by the action of $B$, it follows that $\gamma([J])=O \in \E$, and hence we have $X_1^{aq}X_3^{ap} \in J$ in view of  Remark \ref{invariant ring}. 
Therefore, $J_0 \subset J$. 
We have seen in Theorem \ref{idealD} that $J_0$ has Hilbert function $h$, and 
thus we get $J_0 = J$. 
\end{proof}
\begin{corollary}\label{toric sm}
The invariant Hilbert scheme $\mathcal H$ is smooth, and it coincides with the main component $\mathcal H^{main}$. 
\end{corollary}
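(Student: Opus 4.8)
The plan is to apply Theorem~\ref{Borel}(ii) to the triple $(\G, H_{q-p}, h)$ together with the action of $G' = SL(2)$ and its Borel subgroup $B' = B$. Its hypothesis is satisfied: $H_{q-p}\git\G \cong \E$ has the origin $O$ as its unique closed orbit, and $O$ is a point. Hence it suffices to verify condition (b), namely that $\mathcal H$ is connected and that $\dim T_{[Z]}\mathcal H = \dim\mathcal H^{main}$ for every $[Z] \in \mathcal H^{B}$. Connectedness is immediate from Proposition~\ref{candidate} and Theorem~\ref{Borel}(i), since $\mathcal H$ has a unique $B$-fixed point; and since $\mathcal H^{B} = \{[J_0]\}$ by Proposition~\ref{candidate}, the whole statement reduces to the single equality $\dim T_{[J_0]}\mathcal H = \dim\mathcal H^{main}$.

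First I would record that $\dim\mathcal H^{main} = 3$: indeed $\mathcal H^{main} = \overline{\gamma^{-1}(\mathfrak U)}$, and by Theorem~\ref{commutative} together with Proposition~\ref{flat locus}(i) the morphism $\gamma$ restricts to an isomorphism over $\mathfrak U \cong SL(2)/C_m$, which has dimension~$3$. Next I would establish the lower bound $\dim T_{[J_0]}\mathcal H \geq 3$ by showing $[J_0] \in \mathcal H^{main}$: the component $\mathcal H^{main}$ is closed and, since $SL(2)$ is connected, it is $SL(2)$-stable, so by Theorem~\ref{Borel}(i) it contains a $B$-fixed point, which must be $[J_0]$; as $\mathcal H^{main}$ is irreducible of dimension~$3$, this gives $\dim T_{[J_0]}\mathcal H \geq \dim_{[J_0]}\mathcal H^{main} = 3$.

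The real content is the reverse inequality $\dim T_{[J_0]}\mathcal H \leq 3$. By Theorem~\ref{tangent} we have $T_{[J_0]}\mathcal H \cong \Hom^{\G}_S(I, S/I)$, where $I \subset S$ is the ideal of the subscheme of $H_{q-p}$ corresponding to $[J_0]$. The key preliminary step is to identify $I$ explicitly: since the relation $X_0^{q-p} = X_1X_4 - X_2X_3$ holds in $S$, the generator $X_0^{q-p}$ of $J_0$ lies in $(X_2, X_4)$ modulo that relation, so $I$ is generated as an $S$-module by the three weight vectors $X_2 \in S_{(-p,-1)}$, $X_4 \in S_{(q,1)}$, and $X_1^{aq}X_3^{ap} \in S^{\G} = S_{(0,0)}$ (the last being invariant by Remark~\ref{invariant ring}, applied to the generator $(aq, ap)$ of the toric semigroup $M^+_{l,m}$). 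A $\G$-equivariant $S$-module homomorphism $\phi : I \to S/I$ is then determined by the three values $\phi(X_2), \phi(X_4), \phi(X_1^{aq}X_3^{ap})$, and equivariance forces them to lie in $(S/I)_{(-p,-1)}$, $(S/I)_{(q,1)}$, and $(S/I)_{(0,0)}$ respectively. By Theorem~\ref{idealD} and Corollary~\ref{hilbfunc} each of these weight spaces is $1$-dimensional (note $S/I \cong A/J_0$, the hypersurface relation being contained in $J_0$), so $\phi \mapsto (\phi(X_2), \phi(X_4), \phi(X_1^{aq}X_3^{ap}))$ embeds $\Hom^{\G}_S(I, S/I)$ into a space of dimension $1+1+1 = 3$, whence $\dim T_{[J_0]}\mathcal H \leq 3$. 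Combining the two bounds gives $\dim T_{[J_0]}\mathcal H = 3 = \dim\mathcal H^{main}$, and Theorem~\ref{Borel}(ii) then yields both conclusions of the corollary. I expect the only point needing genuine care to be the passage from $J_0 \subset A$ to its image $I \subset S$ and the resulting list of $S$-module generators with their weights; once that is in place, the weight-space bound and the application of Theorem~\ref{Borel} are formal.
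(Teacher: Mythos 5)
Your proposal is correct and follows essentially the same route as the paper: reduce via Proposition~\ref{candidate} and Theorem~\ref{Borel}(ii) to the single tangent-space computation at the unique $B$-fixed point $[J_0]$, then bound $\dim T_{[J_0]}\mathcal H$ by $3$ using Theorem~\ref{tangent} and the fact that the weight spaces of $A/J_0$ are one-dimensional (Theorem~\ref{idealD}). The only, harmless, difference is that you discard the generator $X_0^{q-p}$ at the outset using the hypersurface relation, whereas the paper keeps all four generators of $J_0$ and recovers the same bound from the induced linear relation $\alpha_1+\alpha_2-\alpha_3=0$; you also make explicit the points the paper leaves implicit, namely that $[J_0]\in\mathcal H^{main}$ and $\dim\mathcal H^{main}=3$.
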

\begin{proof}
Taking Theorems \ref{tangent} and \ref{Borel} into account, it suffices to show that 
\[
\dim \Hom^{\G}_S(J_0, A/J_0)=\dim \mathcal H^{main}=3.
\]
Let $\phi \in \Hom^{\G}_S(
J_0,A/J_0)$. 
Since $\phi$ is $\G$-equivariant, we have 
$\phi(X_0^{q-p})=\alpha_1 X_1X_3$, 
$\phi(X_2)=\alpha_2 X_1$, $\phi(X_4)=\alpha_3 X_3$, 
and $\phi(X_1^{aq}X_3^{ap})=\alpha_4$ 
for some $\alpha_1,\; \alpha_2,\; \alpha_3,\; \alpha_4 \in \C$. 
Also, since $\phi$ is a homomorphism of $S$-modules, we have 
\[
0=\phi(X_0^{q-p}-X_1X_4+X_2X_3) = \phi(X_0^{q-p})-X_1\phi(X_4)+\phi(X_2)X_3
= (\alpha_1+\alpha_2-\alpha_3)X_1X_3.
\]
Notice that $X_1X_3 \notin J_0$, since otherwise we have $\dim (A/J_0)_{(q-p,0)}=0$, which contradicts to $h(q-p,0)=1$. 
Therefore we have $\dim \Hom^{\G}_S(J_0, A/J_0) \leq 3$, and hence the equality. 
\end{proof}
\section{The main component}\label{s-main}
In this last section, we decide the main component $\mathcal H^{main}$.  
\begin{theorem}\label{maintheorem}
The invariant Hilbert scheme $\mathcal H$ is isomorphic to the blow-up $Bl_O(\E)$ of $\E$ at the origin. 
\end{theorem}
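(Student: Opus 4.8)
The plan is to construct explicit mutually inverse morphisms between $\mathcal H$ and $Bl_O(\E)$. The starting point is the explicit description of $\E$ from Remark \ref{toric case}: when $\E$ is toric, $\E$ is the affine cone over $\sigma(\P) \subset \mathbb P^{(aq+1)(ap+1)-1}$, embedded via $|\mathcal O(aq,ap)|$. The origin $O$ is the cone point, so $Bl_O(\E)$ is naturally identified with the total space of the tautological line bundle $\mathcal O_{\sigma(\P)}(-1)$ over $\sigma(\P)$; equivalently it is the blow-up whose exceptional divisor is $\sigma(\P) \cong \P$, and it carries a projective birational morphism to $\E$. First I would set up the $SL(2)$-action on $Bl_O(\E)$ coming from the $SL(2)$-action on $\E$ (which fixes $O$), and verify that $Bl_O(\E)$ has exactly the three $SL(2)$-orbits $\mathfrak U$, $\mathfrak D$ (the generic orbit in the exceptional $\P$), and a closed orbit, matching the orbit stratification we expect on $\mathcal H$.

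Next I would build the morphism $\gamma': \mathcal H \to Bl_O(\E)$. Since $\gamma: \mathcal H \to \E$ is projective and birational and is an isomorphism over $\mathfrak U$ (Theorem \ref{commutative}, Corollary \ref{isom}), and $Bl_O(\E) \to \E$ is the blow-up at the one point that $\gamma$ does not already resolve, by the universal property of blow-up it suffices to show that the ideal sheaf of $O$ pulls back to an invertible ideal sheaf on $\mathcal H$. Concretely, I would use the embedding of $\mathcal H$ into $\E \times \prod_{M \in \mathcal M}\Gr(h(M), F_M^\vee)$ from the last part of \S\ref{s-hilb}. Using Lemma \ref{bij} — the identification of the closed points of $\mathcal H$ with the ideals $I_s$, $J_s$ and their $SL(2)$-translates — I would compute, for the two generating weight spaces $F_{-p,-1} = \langle X_1, X_2\rangle$ and $F_{q,1} = \langle X_3, X_4\rangle$, the images of $[I_s]$ and $[J_s]$ under $\eta_{-p,-1}$ and $\eta_{q,1}$: for $I_s$ the line $\langle X_2\rangle \subset F_{-p,-1}$ and $\langle X_3\rangle \subset F_{q,1}$ (with $s$ recorded by $\gamma$ into $\E$), and for $J_s$ the line $\langle X_4\rangle \subset F_{q,1}$ together with all of $F_{-p,-1}/\Ker = \langle X_1, X_2\rangle$ collapsing appropriately — in other words the Grassmannian coordinates encode precisely the point of the exceptional $\P^1 \times \P^1 = \P(F_{-p,-1})\times\P(F_{q,1})$. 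This map $\eta:=(\eta_{-p,-1},\eta_{q,1}) : \mathcal H \to \P$ together with $\gamma$ should land in $Bl_O(\E) \subset \E \times \P$, giving the morphism $\gamma'$.

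Finally I would check $\gamma'$ is an isomorphism. Both $\mathcal H$ (smooth by Corollary \ref{toric sm}) and $Bl_O(\E)$ are normal, and $\gamma'$ is projective and birational (it is an isomorphism over $\mathfrak U$), so by Zariski's main theorem it is enough to show $\gamma'$ is bijective on closed points, or equivalently quasi-finite. Using Lemma \ref{bij} one reads off that over a point of $\mathfrak D$ the fiber of $\gamma$ consists of a single $SL(2)$-translate of $J_0$ (the value of $s$ being fixed by which point of $\mathfrak D$), over $O$ the fiber is the family $\{[J_0]\}$-translates parametrized exactly by the exceptional $\P$, and over $\mathfrak U$ it is a single point; comparing with the fibers of $Bl_O(\E)\to\E$ gives the bijection. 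The step I expect to be the main obstacle is the bookkeeping in the middle paragraph: showing cleanly that the Grassmannian morphisms $\eta_{-p,-1}, \eta_{q,1}$ factor through the exceptional $\P$ of $Bl_O(\E)$ compatibly with $\gamma$ — i.e. that the pair $(\gamma, \eta)$ really maps into the blow-up and not just into the ambient product — which amounts to matching the $SL(2)$-weights of the cone generators $X^{i_k}Y^{j_k}$ against the sections of $\mathcal O(aq,ap)$ and checking the incidence relations cut out $Bl_O(\E)$ exactly. Once that compatibility is in place, normality plus birationality plus quasi-finiteness finish the argument.
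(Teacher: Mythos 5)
Your proposal follows essentially the same route as the paper: the paper also forms $\psi=\gamma\times\eta$ with $\eta=\eta_{-p,-1}\times\eta_{q,1}$ mapping to $\E\times\P$, identifies the image with $Bl_O(\E)$ via the cone description of $\E$ over $\sigma(\P)$ (using properness and $\mathcal H=\overline{\gamma^{-1}(\mathfrak U)}$ to reduce the incidence question you worry about to a closure-of-the-graph argument over $\mathfrak U$), and concludes with Lemma \ref{bij}, normality of $Bl_O(\E)$, and Zariski's Main Theorem. One bookkeeping slip to fix: the fiber of $\gamma$ over a point of $\mathfrak D$ consists of a translate of $J_1$ (not $J_0$), and the fiber over $O$ consists of the $SL(2)$-translates of \emph{both} $I_0$ and $J_0$, which together form $\mathcal O_1\sqcup\mathcal O_2\cong\mathbb P^1\times\mathbb P^1$ -- the translates of $J_0$ alone give only the closed orbit $SL(2)/B\cong\mathbb P^1$, which would not match the exceptional divisor.
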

As we have seen in  \S \ref{s-hilb}, we can construct an $SL(2)$-equivariant morphism 
\[
\xymatrix{
\eta_{-p, -1} : \mathcal H \ar[r] & \Gr(1, F_{-p, -1}^{\vee}) \cong \mathbb P^1,
}
\]
where the isomorphism $\Gr(1, F_{-p, -1}^{\vee}) \cong \mathbb P^1$ is given by 
$\langle t_0 {X_1}^{\vee} + t_1 {X_2}^{\vee}\rangle \mapsto [t_0:t_1]$.  
Analogously, we have 
\[
\xymatrix{
\eta_{q,1} : \mathcal H \ar[r] & \Gr(1, F_{q,1}^{\vee}) \cong \mathbb P^1.
}
\]
Let $\eta=\eta_{-p,-1} \times \eta_{q,1}$, and define 
\[
\xymatrix{
\psi=\gamma \times \eta : \mathcal H \ar[r] & E_{l,m} \times \mathbb P^1 \times 
\mathbb P^1.
}
\]
We see that $\mathbb P^1 \times \mathbb P^1$ contains exactly two orbits under the induced $SL(2)$-action. 
Indeed, let 
$y_1:=([1:0], [0:1])$, and $y_2:=([1:0], [1:0])$. 
Then, the $SL(2)$-orbit decomposition is given as 
$\P=\mathcal O_1 \sqcup \mathcal O_2$, 
where 
\[
\mathcal O_1:=SL(2) \cdot y_1 \cong SL(2)/T, \quad \mathcal O_2:=SL(2) \cdot y_2 \cong SL(2)/B.
\] 
\begin{remark}
The construction of $\psi$ is valid without the toric hypothesis on $\E$. 
\end{remark}
Now, following the idea of \cite[\S 4.3]{Bec}, we get:
\begin{lemma}\label{bij}
The morphism $\psi$ is bijective onto its image. 
\end{lemma}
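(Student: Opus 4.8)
The plan is to show that $\psi$ is injective on closed points; surjectivity onto the image is automatic, so bijectivity onto the image follows. Since $\gamma$ is an isomorphism over $\mathfrak U$ by Theorem~\ref{commutative}, and since (by Corollary~\ref{isom}) the fibre $\gamma^{-1}(\mathfrak U)$ consists exactly of the $SL(2)$-translates of $[I_1]$, the map $\psi$ is already injective over $\mathfrak U$; so the real work is to check injectivity over the two orbits $\mathfrak D$ and $\{O\}$ of the complement $\E \setminus \mathfrak U$. I would treat these by a uniform argument: given $[J] \in \mathcal H$ with prescribed image point $\gamma([J]) \in \E$ and prescribed lines $\eta_{-p,-1}([J]), \eta_{q,1}([J]) \in \mathbb P^1$, I want to reconstruct the ideal $J$ uniquely. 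The lines in $\mathbb P^1$ record, via \eqref{s1s2} and \eqref{s3s4}, the unique (up to scalar) linear forms $s_1X_1+s_2X_2 \in J$ and $s_3X_3+s_4X_4 \in J$; so $\eta$ pins down two of the generators of $J$.

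The key steps, in order: (1) Reduce to the main component — by Corollary~\ref{toric sm} we have $\mathcal H=\mathcal H^{main}$, so every point of $\mathcal H$ lies in the closure of $\gamma^{-1}(\mathfrak U)$; combined with $SL(2)$-equivariance of $\psi$, it suffices to analyse the fibres of $\psi$ over the $SL(2)$-orbits in $\E \times \P$, i.e. to check injectivity on each $SL(2)$-orbit of $\mathcal H$ separately, and then observe that $\psi$ maps distinct orbits to distinct orbits (which follows once we know the orbit structure on $\E$ is $\mathfrak U \sqcup \mathfrak D \sqcup \{O\}$ and that $\gamma$ already separates $\mathfrak U$ from the rest, while $\eta$, landing in $\mathcal O_1 \sqcup \mathcal O_2$, together with $\gamma$ separates $\mathfrak D$ from $\{O\}$). (2) On the open orbit: invoke Corollaries~\ref{isom} and Theorem~\ref{commutative} directly. (3) Over $\mathfrak D$: using $SL(2)$-equivariance, reduce to the base point $\pi(x')$ with $x'=(0,1,0,1,0)$; here the $B$-type analysis as in Lemma~\ref{boundarybasis2} together with $\gamma([J]) = \pi(x') \in \mathfrak D$ and Remark~\ref{invariant ring} forces $J_1 \subset J$, and since $J_1$ has Hilbert function $h$ (Theorem~\ref{idealD}) we get $J = J_1$ — so the fibre of $\psi$ over the image of $[J_1]$ is a single $SL(2)$-translate class and $\psi$ is injective there. (4) Over $O$: this is the delicate case, because $\gamma$ collapses the whole fibre $\gamma^{-1}(O)$ to a point, so injectivity must come entirely from $\eta$. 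Here I would argue that a point $[J'] \in \gamma^{-1}(O)$ is determined by the pair of linear forms extracted from \eqref{s1s2}, \eqref{s3s4}: after an $SL(2)$-translate one may assume these are $X_2$ and $X_4$, so $(X_0^{q-p}, X_2, X_4) \subset J'$ (using that $X_0^{q-p} \equiv X_1X_4 - X_2X_3 \in J'$ in $S$), and then the condition $\gamma([J']) = O$ forces $X_1^{aq}X_3^{ap} \in J'$ via Remark~\ref{invariant ring}, giving $J_0 \subset J'$ and hence $J' = J_0$ by Theorem~\ref{idealD}; combined with $SL(2)$-equivariance this shows $\psi$ is injective on $\gamma^{-1}(O)$, which is the $SL(2)$-translate class of $[J_0]$ — and crucially these translates land on distinct points of $\{O\}\times\P$ because $\eta$ is already injective on an $SL(2)$-orbit isomorphic to a quotient of $SL(2)$.

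The main obstacle I expect is step (4): one must verify that the two "boundary lines" genuinely vary over the full $SL(2)$-orbit of $[J_0]$ — equivalently, that the stabilizer in $SL(2)$ of the point $[J_0] \in \mathcal H$ maps isomorphically (or at least injectively) under $\eta$ to the stabilizer of $\eta([J_0]) \in \P$, so that no two distinct $SL(2)$-translates of $[J_0]$ get identified by $\psi$. Concretely, $[J_0]$ sits over $O$, so $\gamma$ gives no information; one needs that $\Stab_{SL(2)}([J_0]) = \Stab_{SL(2)}(\eta([J_0]))$, and I would check this by computing both stabilizers: the stabilizer of $J_0 = (X_0^{q-p}, X_2, X_4, X_1^{aq}X_3^{ap})$ under $SL(2)$ acting by left multiplication on the matrix $\left(\begin{smallmatrix} X_1 & X_3 \\ X_2 & X_4 \end{smallmatrix}\right)$, and the stabilizer of the flag data $(\langle X_1\rangle, \langle X_3\rangle) \in \P$ (i.e. the point $y_1$ up to relabelling), which is the maximal torus $T$. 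A short direct check should show both equal $T$ (or a common finite extension), completing the argument; this is the only place where the explicit form of $J_0$ and the $SL(2)$-action really enter, and it is routine once set up but is the crux of the injectivity claim.
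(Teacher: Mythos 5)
Your route is genuinely different from the paper's, and it is in effect the dual decomposition. The paper fibres $\mathcal H$ over $\P$ via $\eta$: it writes $\eta^{-1}(\mathcal O_1) \cong SL(2) \times_T L_1$ and $\eta^{-1}(\mathcal O_2) \cong SL(2) \times_B L_2$ with $L_i=\eta^{-1}(y_i)$, identifies $L_1=\{[I_s]\}$ and $L_2=\{[J_s]\}$ by exactly the kind of ideal-membership argument you use, and then lets $\gamma$ separate the points of each $L_i$ (the $[I_s]$ have distinct images because $s-X_0^{mp}X_1^m$ is an invariant function). You instead fibre over $\E$ via $\gamma$ and let $\eta$ separate points inside each $\gamma$-fibre. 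Both work; the paper's bookkeeping is lighter because $\P$ has only two orbits and each $L_i$ is a one-parameter family on which injectivity of $\gamma$ is immediate, whereas your version must treat three orbits of $\E$ and, over the origin, extract all of the injectivity from $\eta$.

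That last case is where your proof has a genuine gap: the fibre $\gamma^{-1}(O)$ is \emph{not} a single $SL(2)$-orbit. It is the union of $SL(2)\cdot[I_0]$ and $SL(2)\cdot[J_0]$ (note $[I_0]\in\mathcal H$ by Theorem \ref{idealU}, and $\gamma([I_0])=O$ since $X_0^{mp}X_1^m\in I_0$). Your normalization ``after an $SL(2)$-translate one may assume these are $X_2$ and $X_4$'' is only available when $\eta([J'])\in\mathcal O_2$; on the orbit of $[I_0]$ the two linear forms can only be brought to $X_2$ and $X_3$, and step (4) never treats this orbit. The repair is routine and parallel: $X_2,X_3\in J'$ and $\gamma([J'])=O$ force $X_0^{mp}X_1^m\in J'$, hence $I_0\subset J'$ and $J'=I_0$ by Theorem \ref{idealU}; and the two orbits cannot be conflated by $\psi$ because $\eta$ sends them into the disjoint orbits $\mathcal O_1$ and $\mathcal O_2$. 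Concerning your ``crux'': the stabilizer comparison comes for free, since $\Stab_{SL(2)}([J_0])\subseteq\Stab_{SL(2)}(\eta([J_0]))$ always holds by equivariance and Proposition \ref{candidate} supplies the reverse inclusion $B\subseteq\Stab_{SL(2)}([J_0])$ --- but note that $\eta([J_0])=y_2$ is the diagonal point of $\P$, whose stabilizer is the Borel subgroup $B$, not the torus $T$ you predicted; the point with stabilizer $T$ is $y_1=\eta([I_0])$, and $I_0$ is visibly $T$-stable, which handles the orbit you omitted.
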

\begin{proof}
Consider the following $SL(2)$-equivariant commutative diagram:
\[
\xymatrix{
\mathcal H \ar[rr]^{\psi \qquad } \ar[rrd]_{\eta} &  &  E_{l,m} \times \mathbb P^1 \times \mathbb P^1 \ar[d]^{\pr} \\
 &  & \mathbb P^1 \times \mathbb P^1
} 
\]
For $i \in \{1,2\}$, let $\mathcal H_i=\eta^{-1}(\mathcal O_i)$, $L_i=\eta^{-1}(y_i)$, and $N_i=\pr^{-1}(y_i) \cap \psi(\mathcal H)$. 
Then, we see that  both 
$\mathcal H_i$ and $\psi(\mathcal H_i)$ have a  description as a principal fiber bundle: we have 
\[
\mathcal H_1 \cong SL(2) \times_{T} L_1,\quad 
\psi(\mathcal H_1) \cong SL(2) \times_{T} N_1, 
\]
and 
\[
\mathcal H_2 \cong SL(2) \times_{B} L_2,\quad  
\psi(\mathcal H_2) \cong SL(2) \times_{B} N_2.
\]
Therefore we are left to show that $L_i \to N_i$ is bijective.  
First, suppose that $i=1$, and let $[Z] \in L_1$. 
Then by the construction of $\eta$ we have $X_2,\; X_3 \in I_Z$, and hence $(X_0^{q-p}-X_1X_4,\; X_2,\; X_3) \subset I_Z$. 
In view of Remark \ref{invariant ring},  
the condition $\dim \C[Z]^{\G}=h(0,0)=1$ implies that 
$s-X_0^{mp}X_1^m \in  I_Z$
for some $s \in \C$. 
Therefore we have $I_s \subset I_Z$, and hence $I_s=I_Z$ concerning Theorem \ref{idealU}. 
Thus we get 
\[
L_1=\{[Z] \in \mathcal H_1\; :\; I_Z=I_{s},\; \exists s \in \C\}.
\] 
For any $s,\; s' \in \C$, we see that $\gamma([I_s])=\gamma([I_{s'}])$ holds if and only if $s=s'$, since  $s-X_0^{mp}X_1^m,\; s'-X_0^{mp}X_1^m \in S^{\G} \cong \C[\E]$. 
Therefore $L_1 \to N_1$ is bijective. 
Likewise, we have 
$L_2=\{
[Z] \in \mathcal H_2\; :\; I_Z=J_s,\; \exists s \in \C\}$,  
and hence $L_2 \to N_2$ is also bijective. 
\end{proof}
\begin{proof}[Proof of Theorem \ref{maintheorem}]
Set $\tilde{\sigma}=\id_{E_{l,m}} \times \sigma$. 
Then we have the following equivariant commutative diagram: 
\[
\xymatrix{
\mathcal H \ar[r]^{\psi \quad \; \; \; \; \; \;} \ar[drr]_{\gamma} & E_{l,m} \times \mathbb P^1 
\times \mathbb P^1 \ar@{^{(}-_>}[r]^{\tilde{\sigma}\quad \; \; } & E_{l,m} \times \mathbb P^{(aq+1)(ap+1)-1}  \ar[d]^{\pr}\\
& & E_{l,m}
}
\]
Also, let $\varphi : Bl_{O}(E_{l,m}) \to E_{l,m}$ 
be the blow-up morphism of $E_{l,m}$ at the origin. 
By 
the construction of the morphisms $\psi, \sigma$, and $\varphi$, we see that $\varphi^{-1}(\mathfrak U) \cong \tilde{\sigma}(\psi(\gamma^{-1}(\mathfrak U)))$, since $E_{l,m}$ is isomorphic to the affine cone over the embedding $\sigma$ (see Remark \ref{toric case}). 
Therefore, by the properness of $\tilde{\sigma} \circ \psi$, we have 
$\tilde{\sigma}(\psi(\mathcal H))=\overline{\tilde{\sigma}(\psi(\gamma^{-1}(\mathfrak U)))} \cong 
Bl_{O}(E_{l,m})$.
Since $Bl_{O}(E_{l,m})$ is normal, it follows from Lemma \ref{bij} and the Zariski's Main Theorem that $\psi$ is a closed immersion. 
\end{proof}


\paragraph{Acknowledgement}  
The author would like to express her gratitude to Professor Yasunari Nagai, her supervisor, for his valuable discussion and continued unwavering encouragement. 
She is also grateful to Professor Hajime Kaji for his beneficial  advice and kind support. 
Lastly, she would like to thank Professors Daizo Ishikawa, Ryo Ohkawa, and Taku Suzuki for their useful comments and helpful suggestions. 

\newpage

\begin{bibdiv}
\begin{biblist}

\bib{AB}{article}{
   author={Alexeev, Valery},
   author={Brion, Michel},
   title={Moduli of affine schemes with reductive group action},
   journal={J. Algebraic Geom.},
   volume={14},
   date={2005},
   number={1},
   pages={83--117},
}
\bib{BH}{article}{
   author={Batyrev, Victor},
   author={Haddad, Fatima},
   title={On the geometry of ${\rm SL}(2)$-equivariant flips},
   language={English, with English and Russian summaries},
   journal={Mosc. Math. J.},
   volume={8},
   date={2008},
   number={4},
   pages={621--646, 846},
}
\bib{Bec}{article}{
   author={Becker, Tanja},
   title={An example of an ${\rm SL}_2$-Hilbert scheme with
   multiplicities},
   journal={Transform. Groups},
   volume={16},
   date={2011},
   number={4},
   pages={915--938},
}
\bib{B}{article}{
   author={Brion, Michel},
   title={Invariant Hilbert schemes},
   conference={
      title={Handbook of moduli. Vol. I},
   },
   book={
      series={Adv. Lect. Math. (ALM)},
      volume={24},
      publisher={Int. Press, Somerville, MA},
   },
   date={2013},
   pages={64--117},
}
\bib{Bud}{article}{
   author={Budmiger, Jonas},
   title={Deformation of Orbits in Minimal Sheets},
   journal={Dissertation, Universit$\Ddot{a}$t Basel},
   volume={16},
   date={2010},
   number={4},
   pages={915--938},
}
\bib{G}{article}{
   author={Ga{\u\i}fullin, S. A.},
   title={Affine toric ${\rm SL}(2)$-embeddings},
   language={Russian, with Russian summary},
   journal={Mat. Sb.},
   volume={199},
   date={2008},
   number={3},
   pages={3--24},
   translation={
      journal={Sb. Math.},
      volume={199},
      date={2008},
      number={3-4},
      pages={319--339},
   },
}
\bib{HS}{article}{
   author={Haiman, Mark},
   author={Sturmfels, Bernd},
   title={Multigraded Hilbert schemes},
   journal={J. Algebraic Geom.},
   volume={13},
   date={2004},
   number={4},
   pages={725--769},
}
\bib{IN}{article}{
   author={Ito, Yukari},
   author={Nakamura, Iku},
   title={McKay correspondence and Hilbert schemes},
   journal={Proc. Japan Acad. Ser. A Math. Sci.},
   volume={72},
   date={1996},
   number={7},
   pages={135--138},
}
\bib{JR}{article}{
   author={Jansou, S\'ebastien},
   author={Ressayre, Nicolas},
   title={Invariant deformations of orbit closures in $\mathfrak{s}\mathfrak{l}(n)$},
   journal={Represent. Theory},
   volume={13},
   date={2009},
   pages={50--62},
}

\bib{K}{book}{
   author={Kraft, Hanspeter},
   title={Geometrische Methoden in der Invariantentheorie},
   language={German},
   series={Aspects of Mathematics, D1},
   publisher={Friedr. Vieweg \& Sohn, Braunschweig},
   date={1984},
   pages={x+308},
}

\bib{Ku}{article}{
author={Kubota, Ayako},
title={Invariant Hilbert scheme resolution of Popov's $SL(2)$-varieties II: the non-toric case},
note={preprint},
date={2018},
}

\bib{Ter}{article}{
   author={Lehn, Christian},
   author={Terpereau, Ronan},
   title={Invariant deformation theory of affine schemes with reductive
   group action},
   journal={J. Pure Appl. Algebra},
   volume={219},
   date={2015},
   number={9},
   pages={4168--4202},
}
\bib{Pa}{article}{
   author={Panyushev, D. I.},
   title={Resolution of singularities of affine normal quasihomogeneous
   ${\rm SL}_2$-varieties},
   language={Russian},
   journal={Funktsional. Anal. i Prilozhen.},
   volume={22},
   date={1988},
   number={4},
   pages={94--95},
   translation={
      journal={Funct. Anal. Appl.},
      volume={22},
      date={1988},
      number={4},
      pages={338--339 (1989)},
   },
}

\bib{Pan}{article}{
   author={Panyushev, D. I.},
   title={The canonical module of an affine normal quasihomogeneous ${\rm
   SL}_2$-variety},
   language={Russian},
   journal={Mat. Sb.},
   volume={182},
   date={1991},
   number={8},
   pages={1211--1221},
   translation={
      journal={Math. USSR-Sb.},
      volume={73},
      date={1992},
      number={2},
      pages={569--578},
   },
}

\bib{P}{article}{
   author={Popov, V. L.},
   title={Quasihomogeneous affine algebraic varieties of the group ${\rm
   SL}(2)$},
   language={Russian},
   journal={Izv. Akad. Nauk SSSR Ser. Mat},
   volume={37},
   date={1973},
   pages={792--832},
}
\bib{Ter14}{article}{
   author={Terpereau, R.},
   title={Invariant Hilbert schemes and desingularizations of quotients by
   classical groups},
   journal={Transform. Groups},
   volume={19},
   date={2014},
   number={1},
   pages={247--281},
}

\bib{Ter2}{article}{
   author={Terpereau, Ronan},
   title={Invariant Hilbert schemes and desingularizations of symplectic
   reductions for classical groups},
   journal={Math. Z.},
   volume={277},
   date={2014},
   number={1-2},
   pages={339--359},
}

\end{biblist}
\end{bibdiv}

\end{document}